\newtheorem{theo}{Theorem}[section]
\newtheorem{lemm}[theo]{Lemma}
\newtheorem{prop}[theo]{Proposition}
\newtheorem{rema}[theo]{Remark}
\numberwithin{equation}{section}
\def\ud{{\rm d}}
\begin{document}

\title{The continuous dependence and non-uniform dependence of the rotation Camassa-Holm equation in Besov spaces}
\author{
    Yingying Guo$^{*}$~and~Xi Tu \\
    Department of Mathematics, Foshan University,\\
    Foshan, 528000, China
}
\date{}
\maketitle
\newcommand\blfootnote[1]{%
	\begingroup
	\renewcommand\thefootnote{}\footnote{#1}%
	\addtocounter{footnote}{-1}%
	\endgroup
}

\noindent ABSTRACT. In this paper, we first establish the local well-posedness and continuous dependence for the rotation Camassa-Holm equation modelling the equatorial water waves with the weak Coriolis effect in nonhomogeneous Besov spaces $B^s_{p,r}$ with $s>1+1/p$ or $s=1+1/p,\ p\in[1,+\infty),\ r=1$ by a new way: the compactness argument and Lagrangian coordinate transformation, which removes the index constraint $s>3/2$ and improves our previous work \cite{guoy1}. Then, we prove the solution is not uniformly continuous dependence on the initial data in both supercritical and critical Besov spaces.

\blfootnote{\noindent \textit{Mathematics Subject Classification.} Primary: 35Q35; Secondary: 35B30.}
\blfootnote{\noindent \textit{Keywords.} The rotation Camassa-Holm equation, Besov spaces, Continuous dependence, Non-uniform dependence.}
\blfootnote{*Corresponding Author: Yingying Guo.}


\section{Introduction}
\par
In this paper, we consider the Cauchy problem for the following rotation-Camassa-Holm (R-CH) equation in the equatorial water waves with the weak Coriolis effect \cite{cgl}
\begin{numcases}{}
u_t-\beta\mu u_{xxt}+cu_x+3\alpha\varepsilon uu_{x}-\beta_0\mu u_{xxx}+\omega_1\varepsilon^2u^2u_x+\omega_2\varepsilon^3u^3u_x=\alpha\beta\varepsilon\mu(2u_xu_{xx}+uu_{xxx}),\label{eq0}\\
u(0,x)=u_0(x),\label{eq}
\end{numcases}
where $\varepsilon$ is the amplitude parameter, $\mu$ is the shallowness, $\Omega$ characterizes the constant rotational speed of the Earth, the other coefficients of \eqref{eq0} are defined as
\begin{align*}
&c=\sqrt{1+\Omega^2}-\Omega,\ \alpha=\frac{c^2}{1+c^2},\ \beta_0=\frac{c(c^4+6c^2-1)}{6(c^2+1)^2},\ \beta=\frac{3c^4+8c^2-1}{6(c^2+1)^2},\\
&\omega_1=-\frac{3c(c^2-1)(c^2-2)}{2(c^2+1)^3},\ \omega_2=\frac{(c^2-1)^2(c^2-2)(8c^2-1)}{2(c^2+1)^5}.
\end{align*}
Note that $(1-\partial_x^2)^{-1}f={\rm{p}}\ast f$ for any $f\in L^2$, where $\ast$ denote the convolution and ${\rm{p}}(x)=\frac{1}{2}e^{-|x|}$.
Applying the scaling 
$$x\rightarrow x-c_0t,\quad u\rightarrow u-\gamma,\quad t\rightarrow t$$
where $c_0=\frac{\beta_0}{\beta}-\gamma$ and $\gamma$ is the real root of $c-\frac{\beta_0}{\beta}-2\gamma+\frac{\omega_1}{\alpha^2}\gamma^2-\frac{\omega_2}{\alpha^3}\gamma^3=0$, we can rewrite \eqref{eq0} in the weak form
\begin{equation}\label{scaling}
u_t+uu_x=-\partial_{x}p\ast \left(\frac{1}{2}u_x^2+c_1u^2+c_2u^3+c_3u^4\right):=G(u),
\end{equation}
where 
\begin{align*}
c_1=1+\frac{3\gamma^2\omega_2}{2\alpha^3}-\frac{\omega_1\gamma}{\alpha^2},\ c_2=\frac{\omega_1}{3\alpha^2}-\frac{\omega_2\gamma}{\alpha^3},\ c_3=\frac{\omega_2}{4\alpha^3}.
\end{align*}

The study of nonlinear equatorial geophysical waves is of great current interest. The Earth's rotation affects the atmosphere-ocean flow near the Equator in such a way that waves propagate practically along the Equator \cite{cj1}. Furthermore, field data \cite{cj2} suggests that in a 300 km wide strip centered on the Equator, one can use the $f$-plane approximation to justify the relevance of two-dimensional models like \eqref{eq0}. Following the derivations of the Camassa-Holm equation \cite{j} and the Constantin-Lannes equation \cite{cl}, Chen et al. \cite{cgl} derived the R-CH equation \eqref{eq0} in the equatorial region with the influence of the gravity effect and the Coriolis effect from the f-plane governing equation. In addition, the model is analogous to the rotation-Green-Naghdi (R-GN) equations with the weak Earth's rotation effect and Chen et al.\cite{cgl} justified that the R-GN equations tend to associated solution of the R-CH equations in the Camassa-Holm regime $\mu\ll1,\ \varepsilon=O(\sqrt{\mu})$.

For $\Omega=0$, then the coefficients $c=1,\ \alpha=\frac 1 2,\ \beta_0=\frac 1 4,\ \beta=\frac{5}{12},\ \omega_1=\omega_2=0$. Eq. \eqref{eq0} becomes
\begin{equation}\label{eq1}
u_t - \frac{5}{12}\mu u_{xxt}+u_x+\frac{3}{2}\varepsilon uu_{x}-\frac{1}{4}\mu u_{xxx}=\frac{5}{24}\varepsilon\mu(2u_xu_{xx}+uu_{xxx}).
\end{equation}
Applying the transformation
$$u(t,x)=\alpha\varepsilon u(\sqrt{\beta\mu}t,\sqrt{\beta\mu}x)$$
and then using the Galilean transformation
$$u(t,x)\rightarrow u(t,x-\frac{3}{4}t)+\frac{1}{4},$$
Eq. \eqref{eq1} becomes the following classical Camassa-Holm (CH) equation describing the motion of waves at free surface of shallow water under the influence of gravity \cite{ch,cl}
\begin{equation}\label{eq2}
u_t - u_{xxt}+3uu_{x}=2u_{x}u_{xx}+uu_{xxx}.
\end{equation}
The CH equation is completely integrable \cite{cgi} and has a bi-Hamiltonian structure \cite{ff}. It also has the solitary waves and peak solitons \cite{cs}. It is worth mentioning that the peakons show the characteristic for the traveling waves of greatest height and arise as solutions to the free-boundary problem for the incompressible Euler equations over a flat bed, see \cite{c5,t}. The local well-posedness, global strong solutions, blow-up strong solutions of the CH equations were studied in \cite{c2,ce2,ce3,d1,lio}. The global weak solutions, global conservative solutions and dissipative solutions also have been investigated in \cite{bc1,bc2,bcz,cmo,xz1}. For the continuity of the solutions map of the CH equations with respect to the initial data, it was only proved in the spaces $C([0,T];B^{s'}_{p,r})$ for any $s'<s$ with $s>\max\{3/2,1+1/p\}$ by many authors. Recently, Li and Yin \cite{liy} proved that the index of the continuous dependence of the solutions for the Camassa-Holm type equations in $B^{s}_{p,r} \big(s>\max\{3/2,1+1/p\}\big)$ can up to $s$, which improved many authors' results, especially the Danchin's results in \cite{d1,d2}. More recently, Guo et al. \cite{glmy} obtained the local ill-posedness for a class of shallow water wave equations (such as, the CH, DP, Novikov equations and etc.) in critical Sobolev space $H^{\frac 3 2}$ and even in Besov space $B^{1+1/p}_{p,r}$ with $p\in[1,+\infty],\ r\in(1,+\infty]$. However, whether the CH equation is local well-posed or not in critial Besov spaces $B^{1+1/p}_{p,1},p\in(2,+\infty]$ is still an open problem. The main difficult is that the CH equation induce a loss of one order dervative in the stability estimates. To overcome this difficult, we adopt the compactness argument and Lagrangian coordinate transformation in our upcoming article \cite{yyg} rather than the usual techniques used in \cite{liy,guoy1} to obtain the local well-posedness and continuous dependence for the Cauchy problem of CH equation in critial Besov spaces $B^{1+1/p}_{p,1}$ with $p\in[1,+\infty)$. This implies $B^{1+1/p}_{p,1}$ is the critical Besov space and the index $\frac 3 2$ is not necessary for the Camassa-Holm type equations. The locally well-posed or not in $B^{1}_{\infty,1}$ is more difficult for $B^{0}_{\infty,1}$ is not a Banach algebra and is what we're going to consider next. Further, the non-uniform continuity of the CH equation has been investigated in many papers, see \cite{hmp,hk,hkm,lyz,lwyz}. 

We know that the nonlinear terms in the CH equation are all quadratic, but due to the influence of Earth's deflection force caused by the rotation of the Earth, there are three or even four nonlinear terms in the R-CH model, which have an important influence on the fluid movement, especially the phenomenon of wave splitting. So this model has attracted some attention and got some results. Zhu, Liu and Ming \cite{zlm} studied the wave-breaking phenomena and persistence properties for Eq. \eqref{scaling}. Tu, Liu and Mu \cite{tlm} investigated the existence and uniqueness of the global conservative weak solutions to Eq. \eqref{scaling}.
Moreover, the non-uniform dependence about initial data on the circle for Eq. \eqref{scaling} in Sobolev spaces was studied in \cite{z}. 

However, the local well-posedness of the Cauchy problem for Eq. \eqref{scaling} in critial Besov spaces $B^{1+1/p}_{p,1}(\mathbb{R}),\ p\in(2,+\infty]$ and the non-uniform dependence on initial data in $B_{p,r}^{s},\ s>\max\{3/2,1+1/p\}$ or $s=1+1/p,\ p\in[1,2],\ r=1$ have not been investigated yet. In the paper, following the idea of our upcoming article \cite{yyg} and \cite{lyz,lwyz}, we aim to study the local well-posedness and the non-uniform dependence on initial data for Eq. \eqref{scaling} in both supercritical and critical Besov spaces.\\

our main results are stated as follows.
\begin{theo}\label{th}
Let $s\in\mathbb{R},\ 1\leq p,r \leq\infty$ and let $(s,p,r)$ satisfy the condition
\begin{equation}\label{condition}
s>1+\frac 1 p\quad\text{or}\quad s=1+\frac{1}{p},\ 1\leq p<+\infty,\ r=1.
\end{equation}
Assume $u_{0}\in B^s_{p,r}(\mathbb{R}).$ Then, there exists a $T>0$ such that Eq. \eqref{scaling} has a unique solution $u$ in $E^s_{p,r}(T)$ with the initial data $u_0$ and the map $u_0\mapsto u$ is continuous from any bounded subset
of $B^s_{p,r}$ into $E^s_{p,r}(T)$ . Moreover, for all $t\in[0,T]$, we have 
\begin{align}
\|u(t)\|_{B^s_{p,r}}\leq C\|u_0\|_{B^s_{p,r}}.\label{unformly}
\end{align}
\end{theo}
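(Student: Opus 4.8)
The plan is to prove local well-posedness for the transport-type equation \eqref{scaling} in the Besov spaces satisfying condition \eqref{condition}, using the compactness argument combined with the Lagrangian coordinate transformation advertised in the abstract. I would first rewrite \eqref{scaling} as a transport equation $u_t+uu_x=G(u)$ and construct approximate solutions $u^{(n)}$ via an iterative scheme: set $u^{(0)}=0$ (or $S_0 u_0$) and define $u^{(n+1)}$ as the solution of the linear transport equation $\partial_t u^{(n+1)}+u^{(n)}\partial_x u^{(n+1)}=G(u^{(n)})$ with data $u_0$. The standard linear transport estimates in Besov spaces (the Danchin/Bahouri--Chemin--Danchin machinery) give, for $(s,p,r)$ satisfying \eqref{condition} so that $B^s_{p,r}\hookrightarrow L^\infty$ is an algebra, uniform bounds of the form
\begin{align*}
\|u^{(n+1)}(t)\|_{B^s_{p,r}}\leq e^{C\int_0^t\|u^{(n)}\|_{B^s_{p,r}}\,\ud\tau}\Big(\|u_0\|_{B^s_{p,r}}+\int_0^t\|G(u^{(n)})\|_{B^s_{p,r}}\,\ud\tau\Big).
\end{align*}
Since $G(u)=-\partial_x p\ast(\tfrac12 u_x^2+c_1u^2+c_2u^3+c_3u^4)$ gains one derivative from the convolution with $p$, the term $G(u)$ is estimated in $B^s_{p,r}$ using the algebra property and the Moser-type product estimates, yielding $\|G(u)\|_{B^s_{p,r}}\leq C(1+\|u\|_{B^s_{p,r}}^3)\|u\|_{B^s_{p,r}}$. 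A standard bootstrap then produces a uniform time $T>0$ on which $\|u^{(n)}\|_{L^\infty_T B^s_{p,r}}$ stays bounded, giving \eqref{unformly} in the limit.

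Next I would establish convergence of the scheme. In the supercritical case $s>1+1/p$ this is routine: one shows $(u^{(n)})$ is Cauchy in the weaker space $C([0,T];B^{s-1}_{p,r})$ by the transport estimate at regularity $s-1$, then interpolates and uses the uniform $B^s_{p,r}$ bound with Fatou/lower-semicontinuity to recover convergence in $C([0,T];B^s_{p,r})$. The delicate point, and the one the abstract flags as requiring a genuinely new idea, is the critical case $s=1+1/p,\ r=1$, where the difference of two solutions loses one derivative that cannot be absorbed by the algebra estimate at the endpoint. Here the plan is to avoid estimating the difference directly: instead I would apply the Lagrangian coordinate transformation $y\mapsto X(t,y)$ defined by $\partial_t X=u(t,X)$, $X(0,y)=y$, which straightens the transport term and converts \eqref{scaling} into an ODE in Besov space for the transformed variable. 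Combined with a compactness argument (uniform bounds in $B^s_{p,1}$ give weak-$*$ limits, and strong convergence in a slightly weaker topology is upgraded using the continuity of the flow map in Lagrangian coordinates), this yields existence of a limit solution $u\in E^s_{p,r}(T)$ without ever needing the forbidden endpoint difference estimate.

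For uniqueness and the continuous dependence of the map $u_0\mapsto u$ from bounded subsets of $B^s_{p,r}$ into $E^s_{p,r}(T)$, I would again exploit that Lipschitz stability holds in the lower-regularity space: two solutions with data $u_0,v_0$ satisfy $\|u-v\|_{C([0,T];B^{s-1}_{p,r})}\leq C\|u_0-v_0\|_{B^{s-1}_{p,r}}$, which immediately gives uniqueness. To promote this to continuity at the top regularity $s$ — the statement that the solution map is continuous into $E^s_{p,r}(T)$, not merely into $E^{s'}_{p,r}(T)$ for $s'<s$ — I would use the compactness/Lagrangian scheme once more: approximate the data $u_{0,n}\to u_0$ in $B^s_{p,r}$, obtain uniform bounds on a common time interval, deduce strong convergence in the weaker norm, and then close the argument by a commutator estimate showing the high-frequency tails converge, so that $u_n\to u$ in $C([0,T];B^s_{p,r})$.

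The main obstacle I expect is precisely the critical endpoint $s=1+1/p$, $r=1$: the loss of one derivative in the difference equation means the naive contraction-mapping argument fails, and everything hinges on the Lagrangian transformation regularizing the quadratic term $\tfrac12 u_x^2$ together with the compactness argument replacing the missing quantitative contraction. Controlling the regularity of the flow map $X(t,\cdot)$ and its inverse in $B^s_{p,1}$, and verifying that the nonlocal term $G$ behaves well under the change of variables, is the technical heart of the proof; the supercritical estimates and the uniform bound \eqref{unformly} should follow from standard Besov calculus once the scheme is set up.
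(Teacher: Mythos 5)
Your Step 1 (the iterative scheme $\partial_t u^{n+1}+u^n\partial_x u^{n+1}=G(u^n)$, the estimate of $G$ via the $S^{-1}$-multiplier $\partial_x{\rm p}\ast$ and the algebra property, the bootstrap giving a uniform lifespan and the bound \eqref{unformly}) matches the paper, as does the idea of replacing the missing contraction at the endpoint by compactness. The genuine gap is in your uniqueness/continuous-dependence step. You assert the two-sided Lipschitz estimate $\|u-v\|_{C([0,T];B^{s-1}_{p,r})}\leq C\|u_0-v_0\|_{B^{s-1}_{p,r}}$, but at the critical index $s=1+\frac1p,\ r=1$ this is precisely the estimate that is unavailable: the difference $w=u-v$ solves a transport equation whose source contains $\partial_x{\rm p}\ast\big((u_x+v_x)w_x\big)$, and bounding this in $B^{\frac1p}_{p,1}$ requires the product law $\|fg\|_{B^{\frac1p-1}_{p,1}}\lesssim \|f\|_{B^{\frac1p-1}_{p,1}}\|g\|_{B^{\frac1p}_{p,1}}$, i.e.\ Lemma \ref{product}(3) with $s_1=\frac1p-1$, $s_2=\frac1p$, whose hypothesis $s_1+s_2>\max\big(0,\frac2p-1\big)$ fails (one only has equality). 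This one-derivative loss is exactly the obstruction you yourself flagged for the contraction argument, so you cannot quietly reuse the $B^{s-1}$ Lipschitz bound for uniqueness. The paper's resolution is different and deliberately asymmetric: in Lagrangian variables $U_i=u_i\circ y_i$ it proves, by pointwise estimates on the kernel $\frac12 e^{-|x|}$ and Gronwall, that $\|U_1-U_2\|_{W^{1,\infty}\cap W^{1,p}}+\|y_1-y_2\|_{W^{1,\infty}\cap W^{1,p}}\leq C\|u_1(0)-u_2(0)\|_{B^{1+1/p}_{p,1}}$, hence $\|u_1-u_2\|_{L^p}\leq C\|u_1(0)-u_2(0)\|_{B^{1+1/p}_{p,1}}$ --- weaker norm on the left, full norm on the right. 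Uniqueness follows from this; a symmetric $B^{s-1}\to B^{s-1}$ stability estimate is never proved, and in view of the non-uniform dependence results it should not be expected.

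Two further points where your sketch leaves the real work undone. For existence at the endpoint, the paper never solves an ``ODE in Besov space'' in Lagrangian coordinates: its compactness argument is purely Eulerian (cutoffs $\phi_j$, local compactness of $B^{1+1/p}_{p,1}\hookrightarrow B^{1/p}_{p,1}$, Ascoli with a diagonal extraction, the Fatou property, then Lemma \ref{existence} to recover continuity in time at top order), and the Lagrangian quantities are only ever measured in $W^{1,\infty}\cap W^{1,p}$; your variant would need composition and flow-regularity estimates in $B^{1+1/p}_{p,1}$ that you do not supply and that are delicate. For continuity of the solution map at regularity $s$, your ``commutator estimate on high-frequency tails'' is a name for the difficulty rather than an argument: the paper first upgrades the Lagrangian $L^p$-stability to $u^n\to u^\infty$ in $C\big([0,T];B^{\frac1p}_{p,1}\big)$ via the embedding $L^p\hookrightarrow B^0_{p,\infty}$ and interpolation, and then splits $\partial_x u^n=w^n+z^n$, where $w^n$ solves the linear transport equation driven by $u^n$ but with data and source frozen at the limit; Lemma \ref{continuous} (continuity of linear transport solutions with respect to the velocity) gives $w^n\to w^\infty$, and a Gronwall estimate annihilates $z^n$. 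Without this, or an equivalently precise device, your final step does not close.
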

\begin{rema}
The index constraint $\frac{3}{2}$ is removed in Theorem \ref{th}, which implies that $\frac{3}{2}$ is not necessary for the rotation Camassa-Holm equation. This can be achieved by the compactness argument and Lagrangian coordinate transformation, which improves our previous work \cite{guoy1}, see the proof of Theorem \ref{th} for more details in Section 3.
\end{rema}
\begin{theo}\label{uniform}
Let $s\in\mathbb{R},\ 1\leq p,\ r\leq\infty$ and let $(s,p,r)$ satisfy the condition
\begin{equation}\label{cond}
s>\max\big\{\frac 3 2,1+\frac 1 p\big\},\ \ 1\leq p\leq\infty,\ 1\leq r <\infty.
\end{equation}
Then the solution map of problem \eqref{scaling}-\eqref{eq} is not uniformly continuous from any bounded subset in $B^{s}_{p,r}$ into $C([0,T];B^{s}_{p,r})$. More precisely, there exists two sequences of solutions $u^n$ and ${\rm w}^n$ with the initial data $u^n_0={\rm w}^n_0+{\rm v}_0^n$ and ${\rm w}^n_0$ such that 
\begin{align*}
\|{\rm w}^n_0\|_{B^{s}_{p,r}}\lesssim 1\qquad and \qquad \lim\limits_{n\rightarrow\infty}\|{\rm v}^n_0\|_{B^{s}_{p,r}}=0,
\end{align*}
but
\begin{align*}
\liminf\limits_{n\rightarrow\infty}\Big\|u^n-{\rm w}^n\Big\|_{B^{s}_{p,r}}\gtrsim t,	
\ \forall\ t\in[0,T_0],
\end{align*}
with small time $T_0\leq T$.
\end{theo}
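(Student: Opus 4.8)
The plan is to prove non-uniform continuous dependence by explicitly constructing two sequences of solutions whose initial data become arbitrarily close in $B^s_{p,r}$ but whose corresponding solutions stay bounded away from each other for positive time. This is the standard "approximate solutions + actual solutions" scheme used in the references \cite{lyz,lwyz,z}. The key ingredients are: (i) a high-frequency/low-frequency decomposition of the initial data, (ii) careful $B^s_{p,r}$-norm estimates on the building blocks, and (iii) a lower bound on the difference of the actual solutions obtained by comparing them to explicitly computable approximate solutions.

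**First I would** construct the initial data. Following the standard approach, I would take a fixed bump profile $\phi\in C_c^\infty(\mathbb{R})$ with $\hat\phi$ supported in a thin annulus, and set the low-frequency part and high-frequency part separately. A typical choice is
\begin{align*}
{\rm w}^n_0 = \text{(a fixed low-frequency profile, say } 2^{-n}\phi(x) \text{ or a constant-type term),} \qquad {\rm v}^n_0 = 2^{-ns}\phi(x)\cos(2^n x).
\end{align*}
The high-frequency factor $\cos(2^n x)$ concentrates the Fourier support of ${\rm v}^n_0$ near frequency $2^n$, so by the characterization of Besov norms via Littlewood-Paley blocks one computes $\|{\rm v}^n_0\|_{B^s_{p,r}}\sim 2^{-ns}\cdot 2^{ns}=O(1)$ at scale $s$; choosing the amplitude slightly smaller (e.g. $2^{-ns}$ against an $s+\delta$ gain, or an explicit logarithmic/decaying factor) forces $\|{\rm v}^n_0\|_{B^s_{p,r}}\to 0$ while keeping a genuinely nontrivial high-frequency oscillation. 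Here I must verify carefully that $\|{\rm w}^n_0\|_{B^s_{p,r}}\lesssim 1$ and $\|{\rm v}^n_0\|_{B^s_{p,r}}\to 0$, which are direct Littlewood-Paley computations once the Fourier localization of each piece is pinned down.

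**Next** I would produce approximate solutions. Denote by $u^n$ and ${\rm w}^n$ the actual solutions with data $u^n_0={\rm w}^n_0+{\rm v}^n_0$ and ${\rm w}^n_0$ guaranteed by Theorem \ref{th}, whose $B^s_{p,r}$-norms are uniformly bounded by \eqref{unformly}. The transport structure $u_t+uu_x=G(u)$ with $G(u)=-\partial_x p\ast(\tfrac12 u_x^2+c_1u^2+c_2u^3+c_3u^4)$ suggests writing an approximate solution $u^{n,\mathrm{app}}$ that captures the leading transport of the high oscillation: to first order the phase $2^n x$ is transported along $u^n_0$, producing a term like $2^{-ns}\phi(x-t\,{\rm w}^n_0)\cos(2^n(x-t\cdots))$ whose leading-order difference from the ${\rm w}^n$-solution, after applying $\partial_x$ and measuring in $B^s_{p,r}$, is of size $\sim t$. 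I would estimate the error $E^n:=u^n-u^{n,\mathrm{app}}$ by plugging $u^{n,\mathrm{app}}$ into \eqref{scaling}, computing the residual, and running the stability/energy estimate in a lower norm $B^{s-1}_{p,r}$ (or $B^s_{p,r}$ via the Lagrangian-coordinate machinery already invoked for Theorem \ref{th}), showing $\|E^n(t)\|_{B^s_{p,r}}\to 0$ uniformly on $[0,T_0]$; the same for ${\rm w}^n$ against its own approximate solution.

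**The hard part will be** the error estimate and the derivative-loss issue that is the central theme of the paper. Because $G(u)$ contains $u_x^2$, the map loses one derivative, so a naive Gronwall estimate in $B^s_{p,r}$ does not close — this is exactly why the authors needed the Lagrangian transformation and compactness argument for Theorem \ref{th}. For non-uniform dependence I expect the cleanest route is to carry out the approximate-solution error analysis in the slightly weaker space $B^{s-1}_{p,r}$, where commutator and product estimates close without the critical loss, and then interpolate against the uniform $B^{s+1}_{p,r}$-type bounds on the smooth building blocks to recover the $B^s_{p,r}$ lower bound. The quartic and cubic nonlinearities $c_2u^3+c_3u^4$ (absent in the classical CH equation) introduce additional product terms in the residual, but since $B^s_{p,r}$ is a Banach algebra under the hypothesis \eqref{cond} with $s>1+1/p$, these are controlled by Moser-type estimates and contribute only higher-order-in-$2^{-n}$ errors. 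Once $\|u^n-u^{n,\mathrm{app}}\|_{B^s_{p,r}}$ and $\|{\rm w}^n-{\rm w}^{n,\mathrm{app}}\|_{B^s_{p,r}}$ both vanish, the triangle inequality reduces the desired $\liminf\gtrsim t$ to the explicit lower bound $\|u^{n,\mathrm{app}}-{\rm w}^{n,\mathrm{app}}\|_{B^s_{p,r}}\gtrsim t$, which is a direct computation using the transported high-frequency phase and the almost-orthogonality of Littlewood-Paley blocks at frequency $2^n$.
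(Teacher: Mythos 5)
Your construction has the roles of the two pieces of initial data reversed, and this is fatal to the lower bound. In your setup ${\rm w}^n_0$ is a purely low-frequency profile of size $2^{-n}$, while the perturbation ${\rm v}^n_0=2^{-ns}\phi(x)\cos(2^nx)$ is the high-frequency wave, which you then shrink further so that $\|{\rm v}^n_0\|_{B^s_{p,r}}\to0$. But then only $u^n$ carries the high-frequency oscillation: the solution ${\rm w}^n$ emanating from the low-frequency datum has no frequency-$2^n$ content to leading order, so $u^n-{\rm w}^n$ is, up to higher-order corrections, the transported high-frequency wave itself, and its $B^s_{p,r}$-norm is comparable to $\|{\rm v}^n_0\|_{B^s_{p,r}}\to 0$ for each fixed $t$ --- you can never extract $\liminf_{n\to\infty}\|u^n-{\rm w}^n\|_{B^s_{p,r}}\gtrsim t$. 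The Himonas--Kenig phase-shift mechanism you invoke (``the phase $2^nx$ is transported along $u^n_0$, producing a difference of size $\sim t$'') requires \emph{both} solutions to carry the same $O(1)$-norm high-frequency wave, transported at slightly different background speeds; within the framework $u^n_0={\rm w}^n_0+{\rm v}^n_0$ versus ${\rm w}^n_0$ this forces the high-frequency wave into the common part ${\rm w}^n_0$ and the vanishing perturbation ${\rm v}^n_0$ to be \emph{low}-frequency --- exactly the opposite of your assignment. The quantitative reason is the one-derivative-loss cross term: with ${\rm v}^n_0$ low-frequency of size $2^{-n}$, the product ${\rm v}^n_0\,\partial_x{\rm w}^n_0$ gains a factor $2^n$ from differentiating the high-frequency part, exactly compensating the $2^{-n}$, so $\|{\rm v}^n_0\partial_x{\rm w}^n_0\|_{B^s_{p,\infty}}$ stays bounded below; with your swap the analogous term ${\rm w}^n_0\,\partial_x{\rm v}^n_0$ has norm $O(\|{\rm v}^n_0\|_{B^s_{p,r}})\to0$, and no term of size $O(1)$ survives.

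For comparison, the paper takes ${\rm w}^n_0(x)=2^{-ns}\psi(x)\sin\big(\tfrac{33}{24}2^nx\big)$ (high frequency, $\|{\rm w}^n_0\|_{B^s_{p,r}}\lesssim1$) and ${\rm v}^n_0(x)=\tfrac{24}{33}2^{-n}\psi(x)$ (low frequency, norm $O(2^{-n})$), and it does not build travelling approximate solutions at all: it uses the short-time Taylor expansion $u^n\approx u^n_0+t\,{\rm z}^n_0$ with ${\rm z}^n_0=-u^n_0\partial_xu^n_0$ and error $Ct^2+C2^{-n\min\{s-\frac32,1\}}$ in $B^s_{p,r}$ (the proposition establishing \eqref{wwn}), the closeness $\|{\rm w}^n-{\rm w}^n_0\|_{B^s_{p,r}}\leq C2^{-\frac12 n(s-\frac32)}$ (Proposition \ref{wnw0}), and the lower bound $\liminf_{n\to\infty}\|{\rm v}^n_0\partial_x{\rm w}^n_0\|_{B^s_{p,\infty}}\geq\tilde A>0$ (Lemma \ref{vcos}). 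Your instinct to run the error analysis in $B^{s-1}_{p,r}$ and interpolate against $B^{s+1}_{p,r}$ bounds does match the paper's technique --- that is precisely how the $\delta={\rm w}^n-{\rm w}^n_0$ and ${\rm e}^n$ estimates are closed despite the derivative loss --- but it cannot rescue the argument without first correcting the data construction.
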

\begin{rema}
For the non-uniform denpendence of the solutions to \eqref{scaling} in supercritical Besov spaces, we inevitably use the Bony decomposition theory to estimate the $B^{s}_{p,r}$- norm of the initial data we constructed. So we still keep the constraint $\frac 3 2 $ in Theorem \ref{uniform}. The key argument is to construct the initial data.
\end{rema}
\begin{theo}\label{uniform1}
Let $s\in\mathbb{R},\ 1\leq p,\ r\leq\infty$ and let $(s,p,r)$ satisfy the condition
\begin{equation}\label{cond1}
s=1+\frac{1}{p},\ 1\leq p\leq2,\ r=1.
\end{equation}
Then the solution map of problem \eqref{scaling}-\eqref{eq} is not uniformly continuous from any bounded subset in $B^{1+\frac{1}{p}}_{p,1}$ into $C([0,T];B^{1+\frac{1}{p}}_{p,1})$. To be more exact, there exists two sequences of solutions $u^n$ and ${\rm w}^n$ with the initial data $u^n_0={\rm w}^n_0+{\rm v}_0^n$ and ${\rm w}^n_0$ such that 
\begin{align*}
\|{\rm w}^n_0\|_{B^{1+\frac{1}{p}}_{p,1}}\lesssim 1\qquad and \qquad \lim\limits_{n\rightarrow\infty}\|{\rm v}^n_0\|_{B^{1+\frac{1}{p}}_{p,1}}=0,
\end{align*}	
but
\begin{align*}
\liminf\limits_{n\rightarrow\infty}\Big\|u^n-{\rm w}^n\Big\|_{B^{1+\frac{1}{p}}_{p,1}}\gtrsim t,\ \forall\ t\in[0,T_0],
\end{align*}
with small time $T_0\leq T$.
\end{theo}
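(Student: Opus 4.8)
The plan is to exhibit the two solution sequences directly, building them from the genuine solutions furnished by Theorem \ref{th} rather than from purely formal objects. The guiding mechanism is the standard high--low frequency interaction for transport-dominated equations: a slowly varying background $w^n_0$ advects a fast perturbation $v^n_0$, and since the transport nonlinearity $uu_x$ in \eqref{scaling} differentiates the perturbation it gains a factor $2^n$, so that a perturbation that is as small as we like in $B^{1+1/p}_{p,1}$ can still force an $O(1)$ change of the solution at first order in time. Concretely, writing $d^n=u^n-w^n$, the difference solves
\begin{align*}
\partial_t d^n+u^n\partial_x d^n=-(\partial_x w^n)\,d^n+\big(G(u^n)-G(w^n)\big),
\end{align*}
whose right-hand side at $t=0$ is dominated, in $B^{1+1/p}_{p,1}$, by the single term $w^n_0\,\partial_x v^n_0$. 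A Taylor expansion in time will then give $\|d^n(t)\|_{B^{1+1/p}_{p,1}}\gtrsim t\,\|w^n_0\,\partial_x v^n_0\|_{B^{1+1/p}_{p,1}}-\|v^n_0\|_{B^{1+1/p}_{p,1}}-o(t)$, and the construction is arranged so that the leading coefficient is bounded below uniformly in $n$.

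For the data I would fix a Schwartz function $\phi$ whose Fourier transform is supported in a thin annulus, set the high-frequency block $\Phi_n(x)=\phi(x)\cos\big(\tfrac{17}{12}2^n x\big)$, which is spectrally localized near $|\xi|\sim 2^n$, and take
\begin{align*}
v^n_0(x)=2^{-n(1+1/p)}\,2^{-n}\,\Phi_n(x),
\end{align*}
while $w^n_0$ is a fixed ($n$-independent) $O(1)$ low-frequency profile with $w^n_0\equiv\alpha\neq0$ on a neighbourhood of $\operatorname{supp}\phi$. Using that the $B^{1+1/p}_{p,1}$-norm is the $\ell^1$ sum $\sum_j 2^{j(1+1/p)}\|\Delta_j(\cdot)\|_{L^p}$ and that $\Phi_n$ is concentrated on a single dyadic shell, one gets $\|v^n_0\|_{B^{1+1/p}_{p,1}}\simeq 2^{-n}\to0$ while $\|w^n_0\|_{B^{1+1/p}_{p,1}}\lesssim 1$; simultaneously $\|w^n_0\,\partial_x v^n_0\|_{B^{1+1/p}_{p,1}}\simeq |\alpha|\,\|\phi\|_{L^p}\gtrsim 1$, because the derivative restores the compensating factor $2^n$. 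The remaining first-order pieces $v^n_0\partial_x w^n_0$ and $v^n_0\partial_x v^n_0$, together with the difference $G(u^n_0)-G(w^n_0)$ (the derivative in $G$ and the smoothing $\operatorname{p}\ast$ together leave at most zero net derivatives on the high-frequency factor), all carry a genuine power of $2^{-n}$ and hence vanish in $B^{1+1/p}_{p,1}$; this is exactly the balance that isolates $w^n_0\partial_x v^n_0$ and produces the $\gtrsim t$ lower bound.

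The analytic core is to justify the time expansion, i.e. to control $d^n(t)-v^n_0-t\,\partial_t d^n(0)$ uniformly in $n$. Here I would invoke Theorem \ref{th} to ensure that $u^n,w^n$ exist on a common interval $[0,T_0]$ and are uniformly bounded in $B^{1+1/p}_{p,1}$ through \eqref{unformly}, then run the Besov transport and commutator estimates on the $d^n$-equation, together with the product and composition estimates needed for $G(u^n)-G(w^n)$ (including the higher-order terms $c_2u^3+c_3u^4$). The restriction $p\le2$ guarantees $s=1+1/p\ge\tfrac32$, which is precisely what the Bony-decomposition estimates of the specially constructed data require — the same mechanism that forces the $\tfrac32$ threshold in the supercritical Theorem \ref{uniform}. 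The main obstacle I anticipate is the criticality itself: at $s=1+1/p$ with $r=1$ there is no spare derivative, so every product and commutator must be handled with the sharp endpoint Besov inequalities, and one must show that all remainders in time (notably the self-interaction $v^n_0\partial_x v^n_0$ and the nonlocal corrections) are genuinely $o(1)$ uniformly in $n$ while the single term $w^n_0\partial_x v^n_0$ survives; extracting this surviving term by a Littlewood--Paley projection onto the shell $|\xi|\sim2^n$ and bounding everything else off that shell is where the delicate frequency-tracking work lies.
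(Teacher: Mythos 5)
Your construction is inverted relative to what the statement can tolerate, and the defect is structural, not technical. You take the background ${\rm w}^n_0$ to be a \emph{fixed}, $n$-independent low-frequency profile (equal to $\alpha$ near ${\rm supp}\,\phi$) and put all the $n$-dependence into the small high-frequency perturbation ${\rm v}^n_0$, so that $u^n_0={\rm w}_0+{\rm v}^n_0\rightarrow {\rm w}_0$ in $B^{1+1/p}_{p,1}$. But Theorem \ref{th} already gives continuous dependence of the solution map on bounded sets: since $u^n_0\rightarrow{\rm w}_0$ in $B^{1+1/p}_{p,1}$, the corresponding solutions satisfy $u^n\rightarrow{\rm w}$ in $C([0,T];B^{1+1/p}_{p,1})$, which flatly contradicts your claimed bound $\liminf_{n\rightarrow\infty}\|u^n-{\rm w}^n\|_{B^{1+1/p}_{p,1}}\gtrsim t$. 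Non-uniform dependence can only come from two data sequences that do \emph{not} converge to a common datum; this is why the paper makes the high-frequency function the background, ${\rm w}^n_0=2^{-ns}\psi(x)\sin\big(\tfrac{33}{24}2^nx\big)$ (norm $O(1)$, weakly null, not norm-convergent), and takes the perturbation ${\rm v}^n_0=\tfrac{24}{33}2^{-n}\psi(x)$ to be low-frequency, with the surviving first-order term being ${\rm v}^n_0\partial_x{\rm w}^n_0$ as in \eqref{low} — the opposite pairing from yours.

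You can also see the failure inside your own Taylor expansion: your leading term $t\,{\rm w}_0\partial_x{\rm v}^n_0$ is $O(t)$ only because each time derivative of the advected perturbation costs a factor $2^n$, so the second-order remainder is of size $t^2\|\partial_x^2{\rm v}^n_0\|_{B^{1+1/p}_{p,1}}\sim t^22^n$; equivalently, in the paper's Proposition \ref{critial} your data give $\mathbf{Q}(u^n_0)\sim 2^n$, since $\|u^n_0\|_{L^\infty}\sim|\alpha|$ does not decay while $\|u^n_0\|_{B^{3+1/p}_{p,1}}\sim2^n$. Hence the expansion is valid only for $t\lesssim 2^{-n}$, and at fixed $t$ the difference $u^n-{\rm w}^n$ is essentially the transported perturbation ${\rm v}^n_0(\cdot-\alpha t)$-type object, of critical norm $O(2^{-n})$: the fast phase $\alpha t\,2^n$ merely wraps around instead of producing linear growth. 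The paper's reversed construction makes the two scales conspire the other way: the velocity discrepancy between the two flows is the low-frequency ${\rm v}^n_0=O(2^{-n})$, the background oscillates at frequency $2^n$, so the relative phase shift over time $t$ is $O(t)$ with an honest $O(t^2)$ remainder, and $\mathbf{Q}(u^n_0)$ stays bounded because $\|u^n_0\|_{L^\infty}\lesssim 2^{-n}$ and $\|\partial_xu^n_0\|_{L^\infty}\lesssim 2^{-n/p}$ offset the growth of the higher norms — this is in fact where $p\le2$ enters (terms like $\|\partial_xu^n_0\|_{L^\infty}^2\|u^n_0\|_{B^{3+1/p}_{p,1}}\sim 2^{n(1-2/p)}$ must remain bounded), not a Bony-decomposition threshold as you suggest. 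A final point: your plan to run transport/commutator estimates on the equation for $d^n=u^n-{\rm w}^n$ cannot work at the critical index either, because difference estimates lose one derivative and the requisite product estimate fails at regularity $s-1=\tfrac1p$; the paper avoids this entirely by bounding $u^n-u^n_0-t{\rm h}(u^n_0)$ through direct time integration with the $L^\infty$-weighted bounds of Lemma \ref{bspr}.
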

\begin{rema}
For the critial case, the transport equation theory prevents us to obtaining the estimate of the solutions in $B^{\frac 1 p}_{p,1}$. To overcome the problem, we in turn estimate the $L^{\infty}$ norm instead of the $B^{\frac 1 p}_{p,1}$ norm of the solutions, which is different from the supercritial case, see the proof of Theorem \ref{uniform1} in Section 4.
\end{rema}

Our paper unfolds as follows. In the second section, we introduce some preliminaries which will be used in this sequel. In the third section, we establish the local well-posedness and continuous dependness of \eqref{scaling} in $B^s_{p,r}$ with $s>1+\frac{1}{p}$ or $s=1+\frac{1}{p},\ p\in[1,+\infty),\ r=1$. In the last section, we give the non-uniform dependence on initial data for \eqref{scaling} in $B_{p,r}^{s},\ s>\max\{\frac{3}{2},1+\frac{1}{p}\},\ p\in[1,+\infty],\ r\in[1,+\infty)$ or $s=1+\frac{1}{p},\ p\in[1,2],\ r=1$.

\section{Preliminaries}
\par
In this section, we first introduce some properties of the Littlewood-Paley theory in \cite{book}.

Let $\chi: \mathbb{R}\rightarrow[0,1]$ be a radical, smooth, and even function which is suppported in $\mathcal{B}=\{\xi:|\xi|\leq\frac 4 3\}$.
Let $\varphi:\mathbb{R}\rightarrow[0,1]$ be a radical, smooth, function which is suppported in $\mathcal{C}=\{\xi:\frac 3 4\leq|\xi|\leq\frac 8 3\}$.

Denote $\mathcal{F}$ and $\mathcal{F}^{-1}$ by the Fourier transform and the Fourier inverse transform respectively as follows:
\begin{align*}
&\mathcal{F}u(\xi)=\hat{u}(\xi)=\int_{\mathbb{R}}e^{-ix\xi}u(x){\ud}x,\\
&u(x)=\big(\mathcal{F}^{-1}\hat{u}\big)(x)=\frac{1}{2\pi}\int_{\mathbb{R}}e^{ix\xi}\hat{u}(\xi){\ud}\xi.	
\end{align*}

For any $u\in\mathcal{S}'(\mathbb{R}^d)$ and all $j\in\mathbb{Z}$, define
$\Delta_j u=0$ for $j\leq -2$; $\Delta_{-1} u=\mathcal{F}^{-1}(\chi\mathcal{F}u)$; $\Delta_j u=\mathcal{F}^{-1}(\varphi(2^{-j}\cdot)\mathcal{F}u)$ for $j\geq 0$; and $S_j u=\sum_{j'<j}\Delta_{j'}u$.

Let $s\in\mathbb{R},\ 1\leq p,r\leq\infty.$ We define the nonhomogeneous Besov space $B^s_{p,r}(\mathbb{R}^d)$
$$  B^s_{p,r}=B^s_{p,r}(\mathbb{R}^d)=\Big\{u\in S'(\mathbb{R}^d):\|u\|_{B^s_{p,r}}=\big\|(2^{js}\|\Delta_j u\|_{L^p})_j \big\|_{l^r(\mathbb{Z})}<\infty\Big\}.$$

The corresponding nonhomogeneous Sobolev space $H^{s}(\mathbb{R}^d)$ is
$$H^{s}=H^{s}(\mathbb{R}^d)=\Big\{u\in S'(\mathbb{R}^d):\ u\in L^2_{loc}(\mathbb{R}^d),\ \|u\|^2_{H^s}=\int_{\mathbb{R}^d}(1+|\xi|^2)^s|\mathcal{F}u(\xi)|^2d\xi<\infty\Big\}. $$

We introduce a function space, which will be used in the following.
\begin{equation*}
    E^s_{p,r}(T)\triangleq \left\{\begin{array}{ll}
    C([0,T];B^s_{p,r})\cap C^1([0,T];B^{s-1}_{p,r}), & \text{if}\ r<\infty,  \\
    C_w([0,T];B^s_{p,\infty})\cap C^{0,1}([0,T];B^{s-1}_{p,\infty}), & \text{if}\ r=\infty.
    \end{array}\right.
\end{equation*}

Then, we recall some properties about the Besov spaces.
\begin{prop}\label{Besov}\cite{book}
Let $s\in\mathbb{R},\ 1\leq p,p_1,p_2,r,r_1,r_2\leq\infty.$  \\
{\rm(1)} $B^s_{p,r}$ is a Banach space, and is continuously embedded in $\mathcal{S}'$. \\
{\rm(2)} If $r<\infty$, then $\lim\limits_{j\rightarrow\infty}\|S_j u-u\|_{B^s_{p,r}}=0$. If $p,r<\infty$, then $C_0^{\infty}$ is dense in $B^s_{p,r}$. \\
{\rm(3)} If $p_1\leq p_2$ and $r_1\leq r_2$, then $ B^s_{p_1,r_1}\hookrightarrow B^{s-(\frac d {p_1}-\frac d {p_2})}_{p_2,r_2}. $
If $s_1<s_2$, then the embedding $B^{s_2}_{p,r_2}\hookrightarrow B^{s_1}_{p,r_1}$ is locally compact. \\
{\rm(4)} $B^s_{p,r}\hookrightarrow L^{\infty} \Leftrightarrow s>\frac d p\ \text{or}\ s=\frac d p,\ r=1$. \\
{\rm(5)} Fatou property: if $(u_n)_{n\in\mathbb{N}}$ is a bounded sequence in $B^s_{p,r}$, then an element $u\in B^s_{p,r}$ and a subsequence $(u_{n_k})_{k\in\mathbb{N}}$ exist such that
$$ \lim_{k\rightarrow\infty}u_{n_k}=u\ \text{in}\ \mathcal{S}'\quad \text{and}\quad \|u\|_{B^s_{p,r}}\leq C\liminf_{k\rightarrow\infty}\|u_{n_k}\|_{B^s_{p,r}}. $$
{\rm(6)} Let $m\in\mathbb{R}$ and $f$ be a $S^m$-mutiplier $($i.e. f is a smooth function and satisfies that $\forall\ \alpha\in\mathbb{N}^d$,
$\exists\ C=C(\alpha)$ such that $|\partial^{\alpha}f(\xi)|\leq C(1+|\xi|)^{m-|\alpha|},\ \forall\ \xi\in\mathbb{R}^d)$.
Then the operator $f(D)=\mathcal{F}^{-1}(f\mathcal{F})$ is continuous from $B^s_{p,r}$ to $B^{s-m}_{p,r}$.
\end{prop}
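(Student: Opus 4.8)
The plan is to reduce all six assertions to two foundational tools for frequency-localized functions, both proved by direct kernel estimates. The first is the family of Bernstein inequalities: if $\widehat{u}$ is supported in a ball $\{|\xi|\lesssim 2^j\}$ then $\|\partial^\alpha u\|_{L^q}\lesssim 2^{j(|\alpha|+d/p-d/q)}\|u\|_{L^p}$ for $1\le p\le q\le\infty$, while if $\widehat{u}$ is supported in an annulus $\{|\xi|\sim 2^j\}$ one has the reverse bound $\|u\|_{L^p}\lesssim 2^{-j|\alpha|}\sup_{|\beta|=|\alpha|}\|\partial^\beta u\|_{L^p}$; both follow from writing $u=\mathcal{F}^{-1}(\psi(2^{-j}\cdot))\ast u$ for a suitable cut-off $\psi$ and applying Young's inequality after rescaling the kernel. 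The second is the continuous embedding $B^s_{p,r}\hookrightarrow\mathcal{S}'$: for $\phi\in\mathcal{S}$ one writes $\langle u,\phi\rangle=\sum_j\langle\Delta_j u,\phi\rangle$ and estimates each pairing by $\|\Delta_j u\|_{L^p}$ times a Schwartz seminorm of $\phi$, the $j$-sum converging because the seminorm decay beats $2^{-js}$. Everything else is built on these.

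For the functional-analytic parts (1), (2) and (5) I would argue as follows. Completeness: a Cauchy sequence $(u_n)$ in $B^s_{p,r}$ makes each $(\Delta_j u_n)_n$ Cauchy in $L^p$, hence convergent to some $v_j$; the $\mathcal{S}'$-embedding lets one assemble $u=\sum_j v_j$ as a tempered distribution with $\Delta_j u=v_j$, and the lower semicontinuity of the $\ell^r(L^p)$ norm upgrades $\mathcal{S}'$-convergence to convergence in $B^s_{p,r}$. For (2), $u-S_j u=\sum_{j'\ge j}\Delta_{j'}u$ has $B^s_{p,r}$-norm equal to the $\ell^r$-tail $\big\|(2^{j's}\|\Delta_{j'}u\|_{L^p})_{j'\ge j}\big\|_{\ell^r}$, which tends to $0$ when $r<\infty$; density of $C_0^\infty$ then follows by truncating and mollifying the band-limited smooth function $S_j u$, using $p,r<\infty$ to control the resulting errors. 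For the Fatou property (5), a bounded sequence in $B^s_{p,r}$ is bounded in $\mathcal{S}'$, so Banach--Alaoglu yields a subsequence converging in $\mathcal{S}'$; Fatou's lemma in $L^p$ gives $\|\Delta_j u\|_{L^p}\le\liminf_k\|\Delta_j u_{n_k}\|_{L^p}$ for each $j$, and a second application of Fatou in $\ell^r$ produces the stated norm bound.

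The remaining parts rest on Bernstein. For (3) the inclusion $B^s_{p_1,r_1}\hookrightarrow B^{s-(d/p_1-d/p_2)}_{p_2,r_2}$ is Bernstein applied block by block, $\|\Delta_j u\|_{L^{p_2}}\lesssim 2^{jd(1/p_1-1/p_2)}\|\Delta_j u\|_{L^{p_1}}$, combined with $\ell^{r_1}\hookrightarrow\ell^{r_2}$. For (4) I bound $\|u\|_{L^\infty}\le\sum_j\|\Delta_j u\|_{L^\infty}\lesssim\sum_j 2^{j(d/p-s)}\big(2^{js}\|\Delta_j u\|_{L^p}\big)$ and sum: H\"older in $j$ closes the case $s>d/p$ since $(2^{j(d/p-s)})_j\in\ell^{r'}$, while $s=d/p,\ r=1$ gives the bound directly; the necessity of the condition is seen by testing against lacunary data whose single-block $L^p$ mass saturates the $\ell^r$ norm but not the $L^\infty$ norm. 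For the multiplier theorem (6) I factor $f(D)\Delta_j$ as convolution against $\mathcal{F}^{-1}\!\big(f\,\widetilde{\varphi}(2^{-j}\cdot)\big)$ with $\widetilde{\varphi}$ a slightly enlarged cut-off satisfying $\widetilde{\varphi}\varphi=\varphi$; the $S^m$ symbol bounds together with the rescaling $\xi\mapsto 2^j\xi$ give $\big\|\mathcal{F}^{-1}(f\,\widetilde{\varphi}(2^{-j}\cdot))\big\|_{L^1}\lesssim 2^{jm}$, so Young's inequality yields $\|f(D)\Delta_j u\|_{L^p}\lesssim 2^{jm}\|\Delta_j u\|_{L^p}$ and hence $f(D):B^s_{p,r}\to B^{s-m}_{p,r}$.

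The one genuinely non-routine step is the local compactness asserted in (3), since norm control alone never yields compactness. Here I would split $u=S_N u+(u-S_N u)$: over a bounded set of $B^{s_2}_{p,r_2}$ the high-frequency remainder obeys $\|u-S_N u\|_{B^{s_1}_{p,r_1}}\lesssim 2^{-N(s_2-s_1)}\|u\|_{B^{s_2}_{p,r_2}}$, uniformly small; the low-frequency parts $S_N u$, restricted to a fixed ball, form a bounded and (by Bernstein) equicontinuous family, so Arzel\`a--Ascoli extracts a locally convergent subsequence for each $N$, and a diagonal argument over $N$ produces a single subsequence converging in $B^{s_1}_{p,r_1}$ on the ball. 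The kernel estimate underlying (6) is the second point requiring care, as one must track the exact $2^{jm}$ scaling of the $L^1$ kernel norm rather than merely its boundedness.
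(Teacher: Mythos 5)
The paper itself offers no proof of this proposition: it is quoted directly from the monograph \cite{book} of Bahouri, Chemin and Danchin, so the only meaningful comparison is with the standard arguments in that source --- and your proposal is, in outline, exactly those arguments. The reduction of all six parts to Bernstein's inequalities and the embedding $B^s_{p,r}\hookrightarrow\mathcal{S}'$, the blockwise completeness proof in (1), the $\ell^r$-tail estimate for $u-S_ju$ in (2), the block-by-block Bernstein bound $\|\Delta_j u\|_{L^{p_2}}\lesssim 2^{jd(1/p_1-1/p_2)}\|\Delta_j u\|_{L^{p_1}}$ for the embedding in (3), the splitting $u=S_Nu+(u-S_Nu)$ with Arzel\`a--Ascoli plus a diagonal extraction for local compactness, and the rescaled kernel bound $\big\|\mathcal{F}^{-1}\big(f\,\widetilde{\varphi}(2^{-j}\cdot)\big)\big\|_{L^1}\lesssim 2^{jm}$ followed by Young's inequality in (6) are all correct and are the textbook route; you also rightly single out the two genuinely delicate points (the exact $2^{jm}$ scaling in (6) and the compactness mechanism in (3)).

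Two steps, however, need repair as written. First, in the necessity direction of (4), a \emph{single} frequency block can never contradict the embedding, since Bernstein gives $\|\Delta_j u\|_{L^\infty}\lesssim 2^{jd/p}\|\Delta_j u\|_{L^p}$ unconditionally; your single-block test settles only the easy case $s<d/p$. For the critical case $s=d/p$, $r>1$ you must superpose infinitely many blocks with coefficients in $\ell^r\setminus\ell^1$: with $w=\mathcal{F}^{-1}\varphi$ normalized so that $w(0)=1$, set $u_N=\sum_{2\le j\le N}j^{-1}w(2^j\cdot)$; then, up to harmless overlaps of adjacent annuli, $2^{jd/p}\|\Delta_j u_N\|_{L^p}\approx j^{-1}\|w\|_{L^p}$, so $\sup_N\|u_N\|_{B^{d/p}_{p,r}}<\infty$ for $r>1$, while $u_N(0)=\sum_{2\le j\le N}j^{-1}\rightarrow\infty$, which destroys any embedding constant. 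Second, in (5), ``Fatou's lemma in $L^p$'' is not literally available: what you need is lower semicontinuity of the $L^p$-norm under convergence in $\mathcal{S}'$. For $1<p\le\infty$ this follows by testing against $\phi\in C_0^\infty$ with $\|\phi\|_{L^{p'}}\le 1$, but for $p=1$ the $\mathcal{S}'$-limit of an $L^1$-bounded sequence is a priori only a finite measure, and one must invoke the frequency localization: since the limit has Fourier support in an annulus, it equals its own convolution with a Schwartz kernel $h_j$ satisfying $\widehat{h_j}=1$ on that annulus, hence has an $L^1$ density with norm controlled by $\|h_j\|_{L^1}$ times the liminf --- this is also one source of the constant $C$ in the stated Fatou property. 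With these two repairs your argument is complete and coincides with the proof in the cited reference.
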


We next give some crucial interpolation inequalities.
\begin{prop}\label{prop}\cite{book}
{\rm(1)} If $s_1<s_2$, $\lambda\in (0,1)$, and $(p,r)\in[1,\infty]^2$, then we have
\begin{align*}
	&\|u\|_{B^{\lambda s_1+(1-\lambda)s_2}_{p,r}}\leq \|u\|_{B^{s_1}_{p,r}}^{\lambda}\|u\|_{B^{s_2}_{p,r}}^{1-\lambda},\\
	&\|u\|_{B^{\lambda s_1+(1-\lambda)s_2}_{p,1}}\leq\frac{C}{s_2-s_1}\Big(\frac{1}{\lambda}+\frac{1}{1-\lambda}\Big) \|u\|_{B^{s_1}_{p,\infty}}^{\lambda}\|u\|_{B^{s_2}_{p,\infty}}^{1-\lambda}.
\end{align*}
{\rm(2)} If $s\in\mathbb{R},\ 1\leq p\leq\infty,\ \varepsilon>0$, a constant $C=C(\varepsilon)$ exists such that
$$ \|u\|_{B^s_{p,1}}\leq C\|u\|_{B^s_{p,\infty}}\ln\Big(e+\frac {\|u\|_{B^{s+\varepsilon}_{p,\infty}}}{\|u\|_{B^s_{p,\infty}}}\Big). $$
\end{prop}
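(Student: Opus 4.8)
All three inequalities rest on the same elementary factorization: writing $s=\lambda s_1+(1-\lambda)s_2$, one has block by block
$2^{js}\|\Delta_j u\|_{L^p}=\big(2^{js_1}\|\Delta_j u\|_{L^p}\big)^{\lambda}\big(2^{js_2}\|\Delta_j u\|_{L^p}\big)^{1-\lambda}$,
so the only issue is how to pass from this identity on each dyadic block to the asserted norm inequality. For the \emph{first inequality of (1)} I would abbreviate $b_j=2^{js_1}\|\Delta_j u\|_{L^p}$ and $c_j=2^{js_2}\|\Delta_j u\|_{L^p}$, so that the claim reduces to $\|(b_j^{\lambda}c_j^{1-\lambda})_j\|_{\ell^r}\le\|b\|_{\ell^r}^{\lambda}\|c\|_{\ell^r}^{1-\lambda}$. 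For $r=\infty$ this is immediate from the supremum; for $r<\infty$ I would raise to the power $r$, apply H\"older's inequality to $\sum_j b_j^{\lambda r}c_j^{(1-\lambda)r}$ with the conjugate exponents $1/\lambda$ and $1/(1-\lambda)$, and take the $r$-th root. No constant is lost, which accounts for the sharp constant $1$.

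For the \emph{second inequality of (1)} the left-hand side is an $\ell^1$ norm while the right-hand side involves only the $\ell^\infty$ quantities $A=\|u\|_{B^{s_1}_{p,\infty}}$ and $B=\|u\|_{B^{s_2}_{p,\infty}}$, so H\"older no longer suffices and I would use a dyadic split. With $\delta=s_2-s_1>0$, the two bounds $\|\Delta_j u\|_{L^p}\le A2^{-js_1}$ and $\|\Delta_j u\|_{L^p}\le B2^{-js_2}$ give $2^{js}\|\Delta_j u\|_{L^p}\le\min\big(A2^{(1-\lambda)\delta j},\,B2^{-\lambda\delta j}\big)$. Summing over $j$ and splitting at the crossover index $j_0=\delta^{-1}\log_2(B/A)$, the part $j\le j_0$ is a geometric series summing to $A^{\lambda}B^{1-\lambda}(1-2^{-(1-\lambda)\delta})^{-1}$ and the part $j>j_0$ sums to a multiple of $A^{\lambda}B^{1-\lambda}(1-2^{-\lambda\delta})^{-1}$. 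Estimating the geometric factors via $(1-2^{-x})^{-1}\lesssim x^{-1}$ turns these into $\frac{C}{(1-\lambda)\delta}$ and $\frac{C}{\lambda\delta}$, whose sum is exactly $\frac{C}{s_2-s_1}\big(\frac1\lambda+\frac1{1-\lambda}\big)$ times $A^{\lambda}B^{1-\lambda}$.

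For \emph{part (2)} I would run the same splitting with $A=\|u\|_{B^s_{p,\infty}}$ and $M=\|u\|_{B^{s+\varepsilon}_{p,\infty}}$: each block satisfies $2^{js}\|\Delta_j u\|_{L^p}\le\min(A,\,2^{-j\varepsilon}M)$, and since $\Delta_j u=0$ for $j\le-2$ the summation runs only over $j\ge-1$. Splitting at $N\sim\varepsilon^{-1}\log_2(M/A)$, the low range $-1\le j\le N$ contributes at most $(N+2)A$, while the high range $j>N$ gives a convergent geometric tail bounded by $(1-2^{-\varepsilon})^{-1}A$. Collecting terms yields $\|u\|_{B^s_{p,1}}\le C(\varepsilon)A\big(1+\varepsilon^{-1}\log_2(M/A)\big)$, and the elementary estimate $1+\log_2^{+}x\le C\ln(e+x)$ delivers the stated form with $C=C(\varepsilon)$.

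The conceptual content is light; the only genuine work lies in the bookkeeping of constants in the two splitting arguments, which I expect to be the main obstacle. Two points are delicate: first, the crossover indices $j_0$ and $N$ are generally non-integers, so one must pass to the nearest integer and verify that this costs at most a universal factor; second, the geometric-sum factors $(1-2^{-x})^{-1}$ must be controlled precisely in the regime producing the advertised blow-up — as $\lambda\to 0,1$ in part (1) and as $M/A\to\infty$ in part (2) — so that the singular constants $\frac1\lambda+\frac1{1-\lambda}$ and $\ln(e+M/A)$ emerge with the correct exponents. The degenerate cases ($A=0$, or $M/A$ bounded) are disposed of directly, since there both sides either vanish or are comparable up to a constant.
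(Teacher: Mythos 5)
The paper offers no proof of this proposition at all — it is quoted directly from the cited reference \cite{book} — and your argument (blockwise factorization $2^{js}\|\Delta_j u\|_{L^p}=\bigl(2^{js_1}\|\Delta_j u\|_{L^p}\bigr)^{\lambda}\bigl(2^{js_2}\|\Delta_j u\|_{L^p}\bigr)^{1-\lambda}$ plus H\"older in $\ell^r$ for the first inequality, and the min-of-two-geometric-series splitting at the crossover index for the second inequality and for part (2)) is precisely the standard proof given in that reference, so your proposal is correct and essentially identical to the source the paper defers to. The only caveat, which your proof shares with that source, is that $(1-2^{-x})^{-1}\lesssim x^{-1}$ holds only for $x$ in a bounded range, so the stated constant $\frac{C}{s_2-s_1}\bigl(\frac{1}{\lambda}+\frac{1}{1-\lambda}\bigr)$ is the correct one in the regime $s_2-s_1\lesssim 1$ in which the paper actually invokes it.
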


The 1-D Moser-type estimates are provided as follows.
\begin{lemm}\label{product}\cite{book} The following estimates hold:\\
{\rm(1)} For any $s>0$ and any $p,\ r$ in $[1,\infty]$, the space $L^{\infty}\cap B^s_{p,r}$ is an algebra, and a constant $C=C(s)$ exists such that
\begin{align*}
\|uv\|_{B^s_{p,r}}\leq C(\|u\|_{L^{\infty}}\|v\|_{B^s_{p,r}}+\|u\|_{B^s_{p,r}}\|v\|_{L^{\infty}}).
\end{align*}
{\rm(2)} If $s>\max\{\frac{3}{2},1+\frac{1}{p}\}$ and $1\leq p,\ r\leq\infty$. Then the following inequatily holds
\begin{align*}
	\|uv\|_{B^{s-2}_{p,r}}\leq C\|u\|_{B^{s-2}_{p,r}}\|v\|_{B^{s-1}_{p,r}}.	
\end{align*}
{\rm(3)} If $1\leq p,r\leq \infty,\ s_1\leq s_2,\ s_2>\frac{1}{p} (s_2 \geq \frac{1}{p}\ \text{if}\ r=1)$ and $s_1+s_2>\max(0, \frac{2}{p}-1)$, there exists $C=C(s_1,s_2,p,r)$ such that
$$ \|uv\|_{B^{s_1}_{p,r}}\leq C\|u\|_{B^{s_1}_{p,r}}\|v\|_{B^{s_2}_{p,r}}. $$
\end{lemm}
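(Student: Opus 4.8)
The plan is to argue by the method of actual solutions: I construct two sequences of genuine solutions of \eqref{scaling} whose data are close in $B^{s}_{p,1}$ (writing $s=1+\frac1p$ throughout) but which separate at rate $\gtrsim t$. Fix an even $\phi\in C_0^\infty(\mathbb{R})$ with $\widehat{\phi}$ supported in a small ball about the origin, and set
\begin{align*}
{\rm w}^n_0=2^{-ns}\phi(x)\cos\Big(\tfrac{17}{12}2^n x\Big),\qquad {\rm v}^n_0=2^{-n}\phi(x),\qquad u^n_0={\rm w}^n_0+{\rm v}^n_0 .
\end{align*}
The rational factor $\tfrac{17}{12}$ places the carrier inside the single dyadic block $\Delta_n$, so almost orthogonality together with $\|\psi\cos(\tfrac{17}{12}2^n\cdot)\|_{B^{s}_{p,1}}\sim 2^{ns}\|\psi\|_{L^p}$ gives $\|{\rm w}^n_0\|_{B^{s}_{p,1}}\sim\|\phi\|_{L^p}\lesssim1$, while ${\rm v}^n_0$ is purely low frequency so $\|{\rm v}^n_0\|_{B^{s}_{p,1}}\sim 2^{-n}\|\phi\|_{B^{s}_{p,1}}\to0$. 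This realizes the two required data conditions.

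Since $(s,p,1)$ with $s=1+\frac1p$, $1\le p\le2$ satisfies \eqref{condition}, Theorem \ref{th} yields unique solutions $u^n,{\rm w}^n\in E^{s}_{p,1}(T)$ on a common interval $[0,T]$, with $\|u^n(t)\|_{B^{s}_{p,1}}+\|{\rm w}^n(t)\|_{B^{s}_{p,1}}\lesssim1$ uniformly in $n$ and $t$ by \eqref{unformly}. Set ${\rm v}^n=u^n-{\rm w}^n$; subtracting the two copies of \eqref{scaling} gives the transport equation
\begin{align*}
\partial_t {\rm v}^n+u^n\partial_x {\rm v}^n=-{\rm v}^n\partial_x {\rm w}^n+G(u^n)-G({\rm w}^n),\qquad {\rm v}^n(0)={\rm v}^n_0 .
\end{align*}
Rather than estimate $\|{\rm v}^n\|_{B^{s}_{p,1}}$ directly, I bound it below by a single frequency block, $\|{\rm v}^n(t)\|_{B^{s}_{p,1}}\ge 2^{ns}\|\Delta_n {\rm v}^n(t)\|_{L^p}$, and track $\Delta_n{\rm v}^n$ through the transport equation. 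Because ${\rm v}^n_0$ is low frequency, $\Delta_n {\rm v}^n_0\approx0$, so along the flow of $u^n$ the block $\Delta_n{\rm v}^n(t)$ is driven by the forcing $-\Delta_n({\rm v}^n\partial_x {\rm w}^n)$; at $\tau=0$ this has $L^p$-norm $\sim 2^{-ns}$ (the low-frequency factor $\|{\rm v}^n_0\|_{L^\infty}\sim2^{-n}$ multiplies $\Delta_n\partial_x {\rm w}^n_0$, of $L^p$-norm $\sim 2^{n}\cdot2^{-ns}$), while the two-order smoothing of $G=-\partial_x p\ast(\cdots)$ makes the $G$-difference contribute only $O(2^{-n}\cdot2^{-ns})$ to this block. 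Accumulating the forcing over $[0,t]$ therefore produces $\|\Delta_n{\rm v}^n(t)\|_{L^p}\gtrsim t\,2^{-ns}$, i.e. $\|{\rm v}^n(t)\|_{B^{s}_{p,1}}\gtrsim t$, provided the forcing stays $\sim2^{-ns}$ on $[0,T_0]$ and the transport commutator $[\Delta_n,u^n\partial_x]{\rm v}^n$ is negligible.

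The crux — and the point where the critical index $s=1+\frac1p$ forces a departure from the proof of Theorem \ref{uniform} — is exactly these two uniformity claims. In the supercritical regime one would control ${\rm v}^n$ in $B^{s-1}_{p,r}$ using the product estimate Lemma \ref{product}(2), but that inequality requires $s>\frac32$ and is unavailable here; moreover propagating the transport equation at the endpoint regularity $B^{s-1}_{p,1}=B^{1/p}_{p,1}$ loses a derivative, since the borderline transport/commutator estimate fails up to a logarithm. Following the strategy flagged in the Remark, I would instead propagate the $L^\infty$ norm of ${\rm v}^n$ in place of its $B^{1/p}_{p,1}$ norm: an $L^\infty$ energy estimate along the Lipschitz flow of $u^n$ (whose gradient is bounded by $\|u^n\|_{B^{s}_{p,1}}\lesssim1$ via $B^{s}_{p,1}\hookrightarrow W^{1,\infty}$) and the one-order smoothing of $G$ close with no derivative loss, keeping $\|{\rm v}^n(t)\|_{L^\infty}\sim 2^{-n}$ on $[0,T_0]$; this is what pins the forcing at $\sim2^{-ns}$. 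The commutator, estimated at the level of the single block by the standard Besov bound $2^{ns}\|[\Delta_n,u^n\partial_x]{\rm v}^n\|_{L^p}\lesssim c_n\|\partial_x u^n\|_{L^\infty}\|{\rm v}^n\|_{B^{s}_{p,1}}$ with $\|(c_n)\|_{\ell^1}\lesssim1$, carries a coefficient $c_n\to0$ and so contributes $o(2^{-ns})$ as $n\to\infty$; the range $p\le2$ is what keeps the auxiliary product estimates at regularity $\frac1p$ (Lemma \ref{product}(3)) available. Taking $\liminf_n$ then leaves only the $\sim t$ forcing term, giving $\liminf_n\|u^n(t)-{\rm w}^n(t)\|_{B^{s}_{p,1}}\gtrsim t$ for $t\in[0,T_0]$. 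The main obstacle is this endpoint bookkeeping: showing that routing every error through $L^\infty$-type estimates (plus the smoothing of $G$ and the smallness $c_n\to0$) renders all but the tracked block-forcing negligible uniformly in $n$, precisely because the natural $B^{1/p}_{p,1}$ estimates are forbidden at the critical regularity.
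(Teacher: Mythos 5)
Your proposal does not prove the statement it is attached to. The statement is Lemma~\ref{product}: three static, equation-independent product (Moser-type) estimates in Besov spaces, to be established for arbitrary functions $u,v$ — namely $\|uv\|_{B^s_{p,r}}\leq C(\|u\|_{L^{\infty}}\|v\|_{B^s_{p,r}}+\|u\|_{B^s_{p,r}}\|v\|_{L^{\infty}})$, the estimate $\|uv\|_{B^{s-2}_{p,r}}\leq C\|u\|_{B^{s-2}_{p,r}}\|v\|_{B^{s-1}_{p,r}}$, and the general bilinear bound $\|uv\|_{B^{s_1}_{p,r}}\leq C\|u\|_{B^{s_1}_{p,r}}\|v\|_{B^{s_2}_{p,r}}$. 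What you wrote instead is a strategy for Theorem~\ref{uniform1}, the non-uniform dependence of the solution map in the critical space $B^{1+1/p}_{p,1}$: you construct oscillating data ${\rm w}^n_0$, ${\rm v}^n_0$, invoke Theorem~\ref{th} for existence, and track a single Littlewood--Paley block of $u^n-{\rm w}^n$ along the flow. Nowhere in your text is any of the three displayed inequalities addressed; worse, you explicitly \emph{use} Lemma~\ref{product}(2) and (3) as known tools inside your argument, so read as a proof of Lemma~\ref{product} the proposal is circular as well as off-target. This is a genuine gap of the ``wrong approach'' kind: no amount of repair to the dynamics argument will yield the product estimates.

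For orientation: the paper itself supplies no proof of this lemma — it is quoted from \cite{book}, where all three parts are classical consequences of Bony's decomposition $uv=T_uv+T_vu+R(u,v)$. Part (1) follows from the paraproduct bounds $\|T_uv\|_{B^s_{p,r}}\lesssim\|u\|_{L^\infty}\|v\|_{B^s_{p,r}}$, $\|T_vu\|_{B^s_{p,r}}\lesssim\|v\|_{L^\infty}\|u\|_{B^s_{p,r}}$ and the remainder estimate valid for $s>0$; part (3) follows from $\|T_uv\|_{B^{s_1}_{p,r}}\lesssim\|u\|_{B^{s_1}_{p,r}}\|v\|_{L^\infty}$ together with the embedding $B^{s_2}_{p,r}\hookrightarrow L^\infty$ (which is where $s_2>\frac1p$, or $s_2=\frac1p$ with $r=1$, enters), $\|T_vu\|_{B^{s_1}_{p,r}}\lesssim\|v\|_{L^\infty}\|u\|_{B^{s_1}_{p,r}}$, and the one-dimensional remainder estimate, which is exactly what imposes $s_1+s_2>\max\big(0,\frac2p-1\big)$; and part (2) is nothing but the special case $s_1=s-2$, $s_2=s-1$ of part (3), since $s-1>\frac1p$ and $(s-2)+(s-1)=2s-3>\max\big(0,\frac2p-1\big)$ are jointly equivalent to $s>\max\{\frac32,1+\frac1p\}$. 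Incidentally, even as a blind attempt at Theorem~\ref{uniform1} your route differs from the paper's: the paper does not run a frequency-localized commutator estimate on $u^n-{\rm w}^n$, but instead proves the second-order Taylor expansion $u^n=u^n_0+t\,\mathbf{h}(u^n_0)+O(t^2)$ of Proposition~\ref{critial} (routing the endpoint difficulty through $L^\infty$ bounds via Lemma~\ref{bspr}) and then isolates the term $t\,{\rm v}^n_0\partial_x{\rm w}^n_0$ using Lemma~\ref{vcos}; if you intend your block-tracking argument for that theorem, the uniform negligibility of the commutator ($c_n\to0$ uniformly on $[0,T_0]$) would still need an actual proof, which your sketch asserts rather than establishes.
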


Here is the useful Gronwall lemma.
\begin{lemm}\label{gwl}\cite{book}
Let $q(t),\ a(t)\in C^1([0,T]),\ q(t),\ a(t)>0$. Let $b(t)$ is a continuous function on $[0,T]$. Suppose that, for all $t\in [0,T]$,
$$\frac{1}{2}\frac{{\ud}}{{\ud}t}q^2(t)\leq a(t)q(t)+b(t)q^2(t).$$
Then for any time $t$ in $[0,T]$, we have
$$q(t)\leq q(0)\exp\int_0^t b(\tau){\ud}\tau+\int_0^t a(\tau)\exp\big(\int_{\tau}^tb(t'){\ud}t'\big){\ud}\tau.$$
\end{lemm}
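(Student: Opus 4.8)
The plan is to reduce the given quadratic differential inequality to a standard linear one and then integrate it by means of an integrating factor. First I would observe that since $q\in C^1([0,T])$, the chain rule gives $\frac12\frac{\ud}{\ud t}q^2(t)=q(t)q'(t)$, so the hypothesis reads $q(t)q'(t)\leq a(t)q(t)+b(t)q^2(t)$ for all $t\in[0,T]$. Because $q(t)>0$ on $[0,T]$ by assumption, I may divide through by $q(t)$ without affecting the direction of the inequality, obtaining the linear first-order differential inequality
$$q'(t)\leq a(t)+b(t)q(t),\qquad t\in[0,T].$$
This division is the only place where the positivity of $q$ is genuinely used, and it is exactly what the hypothesis guarantees, so no real difficulty arises here.

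Next I would introduce the integrating factor. Set $B(t)=\int_0^t b(\tau)\,\ud\tau$, which is well defined and $C^1$ since $b$ is continuous, and consider the function $t\mapsto e^{-B(t)}q(t)$. Differentiating and inserting the inequality just derived,
$$\frac{\ud}{\ud t}\big(e^{-B(t)}q(t)\big)=e^{-B(t)}\big(q'(t)-b(t)q(t)\big)\leq e^{-B(t)}a(t).$$
Because $a$ is continuous, the right-hand side is integrable, so I would integrate this inequality from $0$ to $t$ to get
$$e^{-B(t)}q(t)-q(0)\leq\int_0^t e^{-B(\tau)}a(\tau)\,\ud\tau.$$

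Finally I would multiply through by $e^{B(t)}>0$ and rearrange, which yields
$$q(t)\leq q(0)\,e^{B(t)}+\int_0^t a(\tau)\,e^{B(t)-B(\tau)}\,\ud\tau.$$
Recognizing that $B(t)=\int_0^t b(\tau)\,\ud\tau$ and $B(t)-B(\tau)=\int_\tau^t b(t')\,\ud t'$ then reproduces exactly the claimed bound. The argument is entirely elementary; the only points that invoke the stated hypotheses are the positivity of $q$ (to divide by it) and the $C^1$/continuity assumptions on $q,a,b$ (to apply the product rule and to ensure all integrands are integrable). I do not expect a genuine obstacle: the sole delicate step is the division by $q(t)$, and should one wish to treat a variant where $q$ may vanish, the standard regularization $q_\ep=\sqrt{q^2+\ep^2}$ followed by letting $\ep\to0$ would handle it, though under the present hypothesis $q>0$ this is unnecessary.
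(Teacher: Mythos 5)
Your proof is correct: dividing the hypothesis $qq'\leq aq+bq^2$ by $q>0$ and integrating with the factor $e^{-\int_0^t b}$ yields exactly the stated bound, and this is precisely the standard argument behind the lemma, which the paper itself does not prove but simply cites from \cite{book}. The only cosmetic remark is that your argument uses less than the hypotheses provide (continuity of $a$ suffices; its positivity and $C^1$ regularity are never needed), which is fine since weakening assumptions in a proof is harmless.
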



In the paper, we also need some estimates for the following 1-D transport equation:
\begin{equation}\label{transport}
\left\{\begin{array}{l}
\partial_{t}f+v\partial_{x}f=g, \\
f(0,x)=f_0(x).
\end{array}\right.
\end{equation}

\begin{lemm}\cite{book}\label{existence}
Let $1\leq p\leq\infty,\ 1\leq r\leq\infty$ and $\theta> -\min(\frac 1 {p}, \frac 1 {p'}).$ Suppose $f_0\in B^{\theta}_{p,r},$ $g\in L^1(0,T;B^{\theta}_{p,r}),$ and $v\in L^\rho(0,T;B^{-M}_{\infty,\infty})$ for some $\rho>1$ and $M>0,$ and
\begin{align*}
\begin{array}{ll}
\partial_{x}v\in L^1(0,T;B^{\frac 1 {p}}_{p,\infty}\cap L^{\infty}), &\ \text{if}\ \theta<1+\frac 1 {p}, \\
\partial_{x}v\in L^1(0,T;B^{\theta}_{p,r}),\ &\text{if}\ \theta=1+\frac{1}{p},\ r>1, \\
\partial_{x}v\in L^1(0,T;B^{\theta-1}_{p,r}), &\ \text{if}\ \theta>1+\frac{1}{p}\ (or\ \theta=1+\frac 1 {p},\ r=1).
\end{array}	
\end{align*}
Then the problem \eqref{transport} has a unique solution $f$ in 
\begin{itemize}
\item [-] the space $C([0,T];B^{\theta}_{p,r}),$ if $r<\infty,$
\item [-] the space $\big(\bigcap_{{\theta}'<\theta}C([0,T];B^{{\theta}'}_{p,\infty})\big)\bigcap C_w([0,T];B^{\theta}_{p,\infty}),$ if $r=\infty.$
\end{itemize}
\end{lemm}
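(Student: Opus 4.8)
The plan is to follow the classical Littlewood--Paley energy method for transport equations: derive an a priori estimate in $B^\theta_{p,r}$ via a localized energy estimate and a commutator bound, build solutions by approximation, recover uniqueness at one lower regularity, and finally upgrade the time regularity. First I would apply the dyadic block $\Delta_j$ to \eqref{transport}, obtaining
$$\partial_t \Delta_j f + v\partial_x \Delta_j f = \Delta_j g + R_j, \qquad R_j := [v\partial_x, \Delta_j] f.$$
An $L^p$ energy estimate on $\Delta_j f$, integrating by parts in the convection term (which produces a factor $\|\partial_x v\|_{L^\infty}$), yields for each $j$
$$\frac{\ud}{\ud t}\|\Delta_j f\|_{L^p} \lesssim \|\partial_x v\|_{L^\infty}\|\Delta_j f\|_{L^p} + \|\Delta_j g\|_{L^p} + \|R_j\|_{L^p}.$$

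The heart of the matter is the commutator estimate. Multiplying by $2^{j\theta}$ and taking the $\ell^r(\mathbb{Z})$ norm, everything is controlled once I bound $\big\|(2^{j\theta}\|R_j\|_{L^p})_j\big\|_{\ell^r}$, and this is exactly where the three regimes in the hypothesis appear: decomposing $R_j$ through Bony's paraproduct and telescoping the $S_{j'}$-sums gives
$$\big\|(2^{j\theta}\|R_j\|_{L^p})_j\big\|_{\ell^r} \lesssim \|\partial_x v\|_{X}\,\|f\|_{B^\theta_{p,r}},$$
where $X = B^{1/p}_{p,\infty}\cap L^\infty$ when $\theta<1+1/p$, $X = B^{\theta}_{p,r}$ when $\theta=1+1/p,\ r>1$, and $X = B^{\theta-1}_{p,r}$ when $\theta>1+1/p$ (or $\theta=1+1/p,\ r=1$). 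Feeding this back and invoking Lemma \ref{gwl} yields the a priori bound
$$\|f(t)\|_{B^\theta_{p,r}} \leq e^{CV(t)}\Big(\|f_0\|_{B^\theta_{p,r}} + \int_0^t \|g(\tau)\|_{B^\theta_{p,r}}\,\ud\tau\Big), \qquad V(t) = \int_0^t \|\partial_x v(\tau)\|_X\,\ud\tau.$$

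With the estimate in hand, I would regularize by $v_n=S_n v,\ f_{0,n}=S_n f_0,\ g_n=S_n g$; for these smooth coefficients the equation is solved globally by the method of characteristics along $\dot X = v_n(t,X)$. Since the estimate is uniform in $n$, the family $(f_n)$ is bounded in $L^\infty(0,T;B^\theta_{p,r})$, and from the equation $(\partial_t f_n)$ is bounded in $L^\infty(0,T;B^{\theta-1}_{p,r})$ (the product $v\partial_x f$ lands there by Lemma \ref{product} under the stated hypotheses). A compactness argument together with the Fatou property (Proposition \ref{Besov}(5)) extracts a limit $f\in L^\infty(0,T;B^\theta_{p,r})$ solving \eqref{transport}; passing to the limit is easy since the only nonlinearity, $v\partial_x f$, is linear in the unknown. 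For uniqueness, the difference $w$ of two solutions solves the homogeneous equation with $w(0)=0$, and running the same energy estimate at regularity $\theta-1$ (where a spare derivative is available for the commutator) forces $w\equiv0$ by Gronwall. Finally, since $\partial_t f\in L^\infty(0,T;B^{\theta-1}_{p,r})$ gives $f\in C([0,T];B^{\theta-1}_{p,r})$, combining with the uniform $B^\theta_{p,r}$ bound produces $f\in C_w([0,T];B^\theta_{p,r})$; when $r<\infty$, continuity of $t\mapsto\|f(t)\|_{B^\theta_{p,r}}$ (read off the forward and backward a priori estimates) promotes this to strong continuity, while for $r=\infty$ only weak continuity survives, matching the statement.

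The step I expect to be the main obstacle is the commutator estimate at the borderline regularity $\theta=1+\frac 1 p$. Away from this index the paraproduct decomposition is routine, but at the critical value the naive bound loses a derivative, so one must argue more delicately: for $r>1$ exploiting the summability gained from $\partial_x v\in B^\theta_{p,r}$, and for $r=1$ using the special structure of the $\ell^1$ sum. This is precisely why the hypothesis on $\partial_x v$ must be stated case-by-case.
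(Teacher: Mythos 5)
The paper itself offers no proof of this lemma --- it is quoted from \cite{book} (it is essentially Theorem 3.19 there) --- and your outline reproduces the classical argument of that reference faithfully: dyadic localization of \eqref{transport}, an $L^p$ estimate on $\Delta_j f$, the commutator bound with exactly the three regimes for $\partial_x v$ that explain the case-by-case hypothesis, Gronwall, regularization of the data and velocity, compactness plus the Fatou property, and a final upgrade of time regularity. The architecture is the right one and you correctly identified the borderline $\theta=1+\frac1p$ commutator estimate as the delicate point. Two steps, however, are flawed as written.

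First, uniqueness. You estimate the difference $w=f_1-f_2$ ``at regularity $\theta-1$, where a spare derivative is available.'' But the lemma assumes only $\theta>-\min(\frac1p,\frac1{p'})$, so $\theta-1$ can drop below the admissible threshold, where the commutator estimate is simply unavailable: take $p=2$ and $\theta=0$, then $\theta-1=-1<-\frac12$. The descent by one derivative is the device needed for \emph{quasilinear} problems, where the equation for the difference has rough coefficients; here the equation is linear in $f$, the difference $w$ already belongs to $L^\infty(0,T;B^{\theta}_{p,r})$, solves the homogeneous transport equation with $w(0)=0$, and applying the a priori estimate at level $\theta$ itself (Lemma \ref{priori estimate}) gives $w\equiv0$ directly. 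So the fix is one line, but the step as you stated it fails on part of the hypothesis range.

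Second, the endgame for $r<\infty$. The implication ``continuity of $t\mapsto\|f(t)\|_{B^{\theta}_{p,r}}$ plus weak continuity implies strong continuity'' is a Radon--Riesz-type argument that requires geometric properties of the space; it is unavailable, or at least unjustified, when $p=1$ or $r=1$ --- and the paper invokes the lemma precisely in the critical case $r=1$. The proof in \cite{book} instead shows strong continuity by splitting $f$ into finitely many low-frequency blocks, each $\Delta_j f$ being continuous in time with values in $L^p$ because $\partial_t\Delta_j f\in L^1(0,T;L^p)$ from the localized equation, plus a high-frequency tail made uniformly small in $\ell^r$ via the a priori bound and dominated convergence; this is exactly where $r<\infty$ enters and why only $C_w$ survives at $r=\infty$. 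A further minor point: for $p=\infty$ the energy estimate by integration by parts is meaningless and must be replaced by an estimate along the flow of $v$, which is where the hypothesis $v\in L^\rho(0,T;B^{-M}_{\infty,\infty})$ together with $\partial_x v\in L^1(0,T;L^\infty)$ is used.
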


\begin{lemm}\cite{book,liy}\label{priori estimate}
Let $1\leq p,\ r\leq\infty$ and $\theta>-\min(\frac{1}{p},\frac{1}{p'}).$
There exists a constant $C$ such that for all solutions $f\in L^{\infty}(0,T;B^{\theta}_{p,r})$ of \eqref{transport} with initial data $f_0$ in $B^{\theta}_{p,r}$ and $g$ in $L^1(0,T;B^{\theta}_{p,r}),$ we have, for a.e. $t\in[0,T],$
$$ \|f(t)\|_{B^{\theta}_{p,r}}\leq \|f_0\|_{B^{\theta}_{p,r}}+\int_0^t\|g(t')\|_{B^{\theta}_{p,r}}{\ud}t'+\int_0^t V'(t')\|f(t')\|_{B^{\theta}_{p,r}}{\ud}t' $$
or
$$ \|f(t)\|_{B^{\theta}_{p,r}}\leq e^{CV(t)}\Big(\|f_0\|_{B^{\theta}_{p,r}}+\int_0^t e^{-CV(t')}\|g(t')\|_{B^{\theta}_{p,r}}{\ud}t'\Big) $$
with
\begin{equation*}
V'(t)=\left\{\begin{array}{ll}
\|\partial_{x}v(t)\|_{B^{\frac 1 p}_{p,\infty}\cap L^{\infty}},\ &\text{if}\ \theta<1+\frac{1}{p}, \\
\|\partial_{x}v(t)\|_{B^{\theta}_{p,r}},\ &\text{if}\ \theta=1+\frac{1}{p},\ r>1, \\
\|\partial_{x}v(t)\|_{B^{\theta-1}_{p,r}},\ &\text{if}\ \theta>1+\frac{1}{p}\ (\text{or}\ \theta=1+\frac{1}{p},\ r=1).
\end{array}\right.
\end{equation*}
If $\theta>0$, then there exists a constant $C=C(p,r,\theta)$ such that the following statement holds
\begin{align*}
\|f(t)\|_{B^{\theta}_{p,r}}\leq \|f_0\|_{B^{\theta}_{p,r}}+\int_0^t\|g(\tau)\|_{B^{\theta}_{p,r}}{\ud}\tau+C\int_0^t \Big(\|f(\tau)\|_{B^{\theta}_{p,r}}\|\partial_{x}v(\tau)\|_{L^{\infty}}+\|\partial_{x}v(\tau)\|_{B^{\theta-1}_{p,r}}\|\partial_{x}f(\tau)\|_{L^{\infty}}\Big){\ud}\tau. 	
\end{align*}
In particular, if $f=av+b,~a,~b\in\mathbb{R},$ then for all $\theta>0,$ $V'(t)=\|\partial_{x}v(t)\|_{L^{\infty}}.$
\end{lemm}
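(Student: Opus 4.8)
The plan is to follow the classical Littlewood--Paley/commutator method for transport equations. First I would localize \eqref{transport} in frequency by applying the dyadic block $\Delta_j$. Writing $f_j:=\Delta_j f$ and commuting $\Delta_j$ past the convection operator, I obtain
$$\partial_t f_j + v\,\partial_x f_j = \Delta_j g + R_j,\qquad R_j:=[v,\Delta_j]\partial_x f = v\,\partial_x\Delta_j f-\Delta_j\big(v\,\partial_x f\big).$$
This reduces everything to an $L^p$ estimate for each block together with control of the commutator $R_j$ in $B^\theta_{p,r}$.

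Second, I would run an $L^p$ energy estimate on each block. For $1\le p<\infty$, multiplying the block equation by $|f_j|^{p-2}f_j$ and integrating in $x$, the transport term becomes $\frac1p\int v\,\partial_x|f_j|^p\,\ud x=-\frac1p\int\partial_x v\,|f_j|^p\,\ud x$, bounded by $\frac1p\|\partial_x v\|_{L^\infty}\|f_j\|_{L^p}^p$. This gives the blockwise differential inequality
$$\frac{\ud}{\ud t}\|f_j\|_{L^p}\le \|\Delta_j g\|_{L^p}+\|R_j\|_{L^p}+\tfrac1p\|\partial_x v\|_{L^\infty}\|f_j\|_{L^p}.$$
Integrating in time, multiplying by $2^{j\theta}$, and taking the $\ell^r(\mathbb{Z})$ norm (interchanging the time integral with the sum by Minkowski) yields
$$\|f(t)\|_{B^\theta_{p,r}}\le \|f_0\|_{B^\theta_{p,r}}+\int_0^t\|g\|_{B^\theta_{p,r}}\,\ud\tau+\int_0^t\Big(\big\|(2^{j\theta}\|R_j\|_{L^p})_j\big\|_{\ell^r}+\tfrac1p\|\partial_x v\|_{L^\infty}\|f\|_{B^\theta_{p,r}}\Big)\ud\tau.$$
The cases $p=\infty$ and $r=\infty$ require replacing the $L^p$ energy identity by a sup/duality argument and using the weak-$*$ formulation, but they lead to the same inequality.

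Third, and this is the heart of the matter, I would estimate the commutator $\big\|(2^{j\theta}\|R_j\|_{L^p})_j\big\|_{\ell^r}$. Expanding $R_j$ via the Bony decomposition of $v\,\partial_x f$ into the paraproducts $T_v\partial_x f$, $T_{\partial_x f}v$ and the remainder $R(v,\partial_x f)$, the piece $[T_v,\Delta_j]\partial_x f$ is handled by the classical paraproduct commutator estimate, which produces a gain of $2^{-j}$ paired against $\|\partial_x v\|_{L^\infty}$, while the remaining para/remainder pieces are controlled by the continuity of paraproducts and remainders. Collecting the dyadic contributions gives a bound of the form $\big\|(2^{j\theta}\|R_j\|_{L^p})_j\big\|_{\ell^r}\le C\,V'(t)\,\|f\|_{B^\theta_{p,r}}$, where the norm placed on $\partial_x v$ is precisely the one that lets every para/remainder piece close in $B^\theta_{p,r}$. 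The threshold $\theta=1+\frac1p$ emerges because $B^{1/p}_{p,\infty}$ is the borderline regularity at which $\partial_x v$ controls the critical paraproduct (via $B^{1/p}_{p,1}\hookrightarrow L^\infty$): above it one may use $\|\partial_x v\|_{B^{\theta-1}_{p,r}}$, exactly at it with $r>1$ one needs $\|\partial_x v\|_{B^{\theta}_{p,r}}$, and below it $\|\partial_x v\|_{B^{1/p}_{p,\infty}\cap L^\infty}$. I expect this regime-dependent commutator estimate to be the main obstacle.

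Finally, inserting the commutator bound (and absorbing the $\frac1p\|\partial_x v\|_{L^\infty}$ term, which is dominated by $V'$ in every regime) produces the first stated integral inequality, and applying the Gronwall Lemma \ref{gwl} converts it into the exponential form. For the refined bound valid when $\theta>0$, I would instead invoke the sharper commutator estimate that isolates the two contributions $\|f\|_{B^\theta_{p,r}}\|\partial_x v\|_{L^\infty}$ and $\|\partial_x v\|_{B^{\theta-1}_{p,r}}\|\partial_x f\|_{L^\infty}$, so that only $\|\partial_x v\|_{L^\infty}$ enters the Gronwall factor. The special case $f=av+b$ follows because then $\partial_x f=a\,\partial_x v$, whence $R_j$ inherits the algebraic structure of $v\,\partial_x v=\frac12\partial_x(v^2)$ and closes using only $\|\partial_x v\|_{L^\infty}$, giving $V'(t)=\|\partial_x v\|_{L^\infty}$ for all $\theta>0$.
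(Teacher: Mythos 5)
Your proposal is correct and is essentially the standard proof: the paper does not prove this lemma itself but cites it from \cite{book,liy}, where the argument is exactly your scheme --- dyadic localization $\Delta_j$, the blockwise $L^p$ energy estimate producing the $\frac1p\|\partial_x v\|_{L^\infty}$ term, the regime-dependent commutator bound $\|(2^{j\theta}\|[v,\Delta_j]\partial_x f\|_{L^p})_j\|_{\ell^r}\leq CV'(t)\|f\|_{B^{\theta}_{p,r}}$ (Lemma 2.100/Theorem 3.14 in \cite{book}, with the borderline case $\theta=1+\frac1p$, $r=1$ as in \cite{liy}), and Gronwall. Your treatment of the special case $f=av+b$ also closes, though the quickest route is to note $\partial_x f=a\partial_x v$ and absorb $\|\partial_x v\|_{B^{\theta-1}_{p,r}}\|\partial_x f\|_{L^{\infty}}\leq C\|f\|_{B^{\theta}_{p,r}}\|\partial_x v\|_{L^{\infty}}$ directly in the refined $\theta>0$ estimate.
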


\section{The local well-posedness and continuous dependence in both supercritical and critical Besov spaces}
\par
In this section, we present the local well-posedness and continuous dependence for the Cauchy problem of \eqref{scaling} in Besov spaces $B^{s}_{p,r},\ s>1+\frac{1}{p}$ or $s=1+\frac{1}{p},\ 1\leq p<+\infty,\ r=1$ by the compactness theory and Lagrangian coordinate transformation, which is a new way and is different from our previous proof in \cite{guoy1}.

\begin{proof}[{\rm\textbf{ The proof of Theorem \ref{th}:}}]
Here we just consider the critical case i.e. $s=1+\frac{1}{p},\ 1\leq p<\infty,\ r=1$ in the following, the other case is similar and more easily. Let's divide  four steps to prove it.\\

\textbf{Step 1. We will structure  a family of approximate solution sequences by iterative scheme.}
	
Assuming that $u^0=0$, we define by induction a sequence $\{u^n\}_{n\in\mathbb{N}}$ of smooth functions by solving the following  linear transport equation:
\begin{equation}\label{aa}
\partial_tu^{n+1}+u^n\partial_xu^{n+1}=-\partial_{x}{\rm{p}}\ast\Big\{c_1(u^n)^2+\frac{1}{2}(u_x^n)^2+c_2(u^n)^3+c_2(u^n)^4\Big\}=G(u^{n}).
\end{equation}
Assume that $\{u^n\}_{n\in\mathbb{N}}$ belongs to $C\big([0,T];B^{1+\frac{1}{p}}_{p,1}\big)$ for all $T>0$. We know from Lemma \ref{product} (1) that $B^{\frac{1}{p}}_{p,1},\ B^{1+\frac{1}{p}}_{p,1}$ are algebras and the embedding $B^{1+\frac{1}{p}}_{p,1}\hookrightarrow B^{\frac{1}{p}}_{p,1}\hookrightarrow L^\infty$ holds. Note that the operator $\partial_x{\rm{p}}\ast$ is a $S^{-1}$-mutiplier. Then, we have
\begin{align*}
&\|\partial_x{\rm{p}}\ast\big((u^n)^k\big)\|_{B^{1+\frac{1}{p}}_{p,1}}\leq C\|u^n\|^k_{B^{1+\frac{1}{p}}_{p,1}},\ k=2,3,4,\\
&\|\partial_x{\rm{p}}\ast\big((u^n_x)^2\big)\|_{B^{1+\frac{1}{p}}_{p,1}}\leq C\|u^n_x\|^2_{B^{\frac{1}{p}}_{p,1}}\leq C\|u^n\|^2_{B^{1+\frac{1}{p}}_{p,1}}.
\end{align*}	
So that
\begin{align}
\|G(u^n)\|_{B^{1+\frac{1}{p}}_{p,1}}\leq C\Big(\|u^n\|^2_{B^{1+\frac{1}{p}}_{p,1}}+\|u^n\|^3_{B^{1+\frac{1}{p}}_{p,1}}+\|u^n\|^4_{B^{1+\frac{1}{p}}_{p,1}}\Big),\label{bb}
\end{align}
which implies $G(u^n)\in L^\infty\Big([0,T];B^{1+\frac{1}{p}}_{p,1}\Big)$ for any $T>0$. From Lemma \ref{existence}, we know \eqref{aa} has a global solution $u^{n+1}\in C\Big([0,T];B^{1+\frac{1}{p}}_{p,1}\Big)$.
Thanks to Lemma \ref{priori estimate} and \eqref{bb}, we infer that
\begin{align}
\|u^{n+1}(t)\|_{B^{1+\frac{1}{p}}_{p,1}}
\leq&e^{C\int_0^t\|u^n(t')\|_{B^{1+\frac{1}{p}}_{p,1}}{\ud}t'}\bigg(\|u_0\|_{B^{1+\frac{1}{p}}_{p,1}}\notag\\
&+C\int_0^te^{-C\int_0^\tau\|u^n(\tau')\|_{B^{1+\frac{1}{p}}_{p,1}}{\ud}\tau'}\big(\|u^n\|^2_{B^{1+\frac{1}{p}}_{p,1}}+\|u^n\|^3_{B^{1+\frac{1}{p}}_{p,1}}+\|u^n\|^4_{B^{1+\frac{1}{p}}_{p,1}}\big){\ud}\tau\bigg).\label{dd}
\end{align}
Fix a $T>0$ such that
$1-6CT\Big(\|u_0\|_{B^{1+\frac{1}{p}}_{p,1}}+\|u_0\|^2_{B^{1+\frac{1}{p}}_{p,1}}+\|u_0\|^3_{B^{1+\frac{1}{p}}_{p,1}}\Big)>0$ and suppose by induction that
\begin{align}	\|u^{n}(t)\|_{B^{1+\frac{1}{p}}_{p,1}}
\leq\frac{\|u_0\|_{B^{1+\frac{1}{p}}_{p,1}}}{\Big[1-6Ct\Big(\|u_0\|_{B^{1+\frac{1}{p}}_{p,1}}+\|u_0\|^2_{B^{1+\frac{1}{p}}_{p,1}}+\|u_0\|^3_{B^{1+\frac{1}{p}}_{p,1}}\Big)\Big]^{\frac{1}{3}}},\ \ \ \ \ \ \forall\ t\in[0,T].\label{hh}
\end{align}
Plugging \eqref{hh} into \eqref{dd} yields
\begin{align*}
\|u^{n+1}(t)\|_{B^{1+\frac{1}{p}}_{p,1}}
\leq\frac{\|u_0\|_{B^{1+\frac{1}{p}}_{p,1}}}{\Big[1-6Ct\Big(\|u_0\|_{B^{1+\frac{1}{p}}_{p,1}}+\|u_0\|^2_{B^{1+\frac{1}{p}}_{p,1}}+\|u_0\|^3_{B^{1+\frac{1}{p}}_{p,1}}\Big)\Big]^{\frac{1}{3}}},\ \ \ \ \ \forall\ t\in[0,T].
\end{align*}
Therefore, $\{u^n\}_{n\in\mathbb{N}}$ is uniformly bounded in $C\Big([0,T];B^{1+\frac{1}{p}}_{p,1}\Big)$.\\
	
\textbf{Step 2. We shall use the compactness method for the approximating sequence $\{u^n\}_{n\in\mathbb{N}}$ to get a solution $u$ which satifies Eq.  \eqref{scaling} in the sence of distributions.}

Since $u^n$ is uniformly bounded in $L^\infty\big([0,T];B^{1+\frac{1}{p}}_{p,1}\big)$, we can deduce that
\begin{align*}
&\|u^n\partial_{x}u^n\|_{B^{\frac{1}{p}}_{p,1}}\leq\|u^n\|_{B^{\frac{1}{p}}_{p,1}}\|\partial_{x}u^n\|_{B^{\frac{1}{p}}_{p,1}}\leq\|u^n\|^2_{B^{1+\frac{1}{p}}_{p,1}}\leq C,\\
&\|G(u^{n})\|_{B^{\frac{1}{p}}_{p,1}}\leq C\big(\|u^n\|^2_{B^{1+\frac{1}{p}}_{p,1}}+\|u^n\|^3_{B^{1+\frac{1}{p}}_{p,1}}+\|u^n\|^4_{B^{1+\frac{1}{p}}_{p,1}}\big)\leq C,
\end{align*}	
which infer that $\partial_{t}u^n$ is uniformly bounded in $L^\infty\big([0,T];B^{\frac{1}{p}}_{p,1}\big)$. Thus, 
\begin{align*}
u^n\ \text{is uniformly bounded in}\ C\big([0,T];B^{1+\frac{1}{p}}_{p,1}\big)\cap C^{\frac{1}{2}}\big([0,T];B^{\frac{1}{p}}_{p,1}\big).	
\end{align*}
Let $\{\phi_j\}_{j\in\mathbb{N}}$ be a sequence of smooth functions with value in $[0,1]$ supported in the ball $B(0,j+1)$ and equal to 1 on $B(0,j)$. Notice that the map $z\mapsto\phi_jz$ is compact from $B^{1+\frac{1}{p}}_{p,1}$ to $B^{\frac{1}{p}}_{p,1}$ by Theorem 2.94 in \cite{book}. Taking advantage of the Ascoli's theorem and the Cantor's diagonal process, we can find some function $u_j$ such that for any $j\in\mathbb{N}$, $\phi_ju^n$ tends to $u_j$. From that, we can easily deduce that there exists some function $u$ such that for all $\phi\in\mathcal{D}$, $\phi u^n$ tends to $\phi u$ in $C\big([0,T];B^{\frac{1}{p}}_{p,1}\big)$. Combining the uniformly boundness of $u^n$ and the Fatou property for Besov spaces, we realiy obtain that $u\in L^{\infty}\big([0,T];B^{1+\frac{1}{p}}_{p,1}\big)$. By virtue of the interpolation inequality, we have $\phi u^n$ tends to $\phi u$ in $C\big([0,T];B^{1+\frac{1}{p}-\varepsilon}_{p,1}\big)$ for any $\varepsilon>0$.  Next, it is a routine process to prove that $u$ satifies Eq. \eqref{scaling} in the sence of distributions. Note that the right side of the Eq. \eqref{scaling} belongs to $L^{\infty}\big([0,T];B^{\frac{1}{p}}_{p,1}\big)$. Thus, according to Lemma \ref{existence}, we see that $u\in C\big([0,T];B^{1+\frac{1}{p}}_{p,1}\big)$. Using Eq. \eqref{scaling} again, we get $\partial_{t}u\in C\big([0,T];B^{\frac{1}{p}}_{p,1}\big)$. Hence, $u\in C\big([0,T];B^{1+\frac{1}{p}}_{p,1}\big)\cap C^1\big([0,T];B^{\frac{1}{p}}_{p,1}\big)$.\\
	
\textbf{Step 3. Uniqueness.}
	
We establish the associated Lagrangian scale of \eqref{scaling}, i.e. the following initial valve problem
\begin{equation}\label{ODE}
\left\{\begin{array}{ll}
\frac{{\ud}y}{{\ud}t}=u\big(t,y(t,\xi)\big),\\
y(0,\xi)=\xi.
\end{array}\right.
\end{equation}
\begin{lemm}\label{solution}\cite{c2}
Let $u\in C([0,T];H^s)\cap C^1([0,T];H^{s-1}),\ s\geq2$. Then the problem \eqref{ODE} has a unique solution $y\in C^1([0,T]\times\mathbb{R};\mathbb{R})$. Moreover, the map $y(t,\cdot)$ is an increasing diffeomorphism of $\mathbb{R}$ with
\begin{equation*}
y_{\xi}(t,\xi)=\exp\Big(\int_0^t u_x(\tau,y(\tau,\xi)){\ud}\tau\Big)>0,\ \forall\ (t,\xi)\in[0,T]\times\mathbb{R}.
\end{equation*}
\end{lemm}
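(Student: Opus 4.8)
The plan is to treat \eqref{ODE} as a classical ODE in the variable $y$, with vector field $F(t,y):=u(t,y)$, and to invoke the Cauchy--Lipschitz theory together with the smooth dependence of flows on initial data. First I would record the regularity of $u$ that makes this possible. Since $s\geq 2>\frac32$, the Sobolev embedding $H^s(\mathbb{R})\hookrightarrow C^1_b(\mathbb{R})$ shows that $u(t,\cdot)$ and $u_x(t,\cdot)$ are bounded and continuous, uniformly in $t\in[0,T]$; in particular there is a constant $L$ with $\|u(t,\cdot)\|_{L^\infty}+\|u_x(t,\cdot)\|_{L^\infty}\leq L$ for all $t$, so that $F$ is globally Lipschitz in $y$ uniformly in $t$. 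Moreover, from $u\in C^1([0,T];H^{s-1})$ together with $H^{s-1}\hookrightarrow C_b$ (valid since $s-1\geq1>\frac12$) one gets that $u_t$ is jointly continuous, whence $F$ is in fact $C^1$ jointly in $(t,y)$.

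With these bounds in hand, the second step applies the Picard--Lindel\"of theorem: for each fixed $\xi$ there is a unique maximal solution, and because $\big|\frac{\ud}{\ud t}y\big|=|u(t,y)|\leq L$ the solution cannot escape in finite time, so it is defined on the whole interval $[0,T]$. This yields a unique $y(\cdot,\xi)\in C^1([0,T])$ for every $\xi$.

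For the joint $C^1$ regularity and the explicit formula for $y_\xi$, I would use the classical theorem on differentiable dependence of ODE solutions on their initial data: since $F$ is $C^1$ in $(t,y)$, the flow $(t,\xi)\mapsto y(t,\xi)$ is $C^1$ on $[0,T]\times\mathbb{R}$, and $\eta(t):=y_\xi(t,\xi)$ solves the variational (linearized) equation
\[
\frac{\ud}{\ud t}\eta = u_x\big(t,y(t,\xi)\big)\,\eta,\qquad \eta(0)=1.
\]
Integrating this scalar linear ODE gives exactly
\[
y_\xi(t,\xi)=\exp\Big(\int_0^t u_x\big(\tau,y(\tau,\xi)\big)\,\ud\tau\Big)>0,
\]
which is the asserted identity and shows that $y(t,\cdot)$ is strictly increasing.

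Finally, to upgrade ``strictly increasing'' to ``diffeomorphism of $\mathbb{R}$'', I would establish surjectivity from the a priori bound $|y(t,\xi)-\xi|\leq\int_0^t|u(\tau,y)|\,\ud\tau\leq LT$, which forces $y(t,\xi)\to\pm\infty$ as $\xi\to\pm\infty$; combined with strict monotonicity and continuity this makes $y(t,\cdot)$ a bijection of $\mathbb{R}$. Since $y_\xi>0$ everywhere, the inverse function theorem furnishes a $C^1$ inverse, so $y(t,\cdot)$ is an increasing $C^1$-diffeomorphism. I expect the only genuinely delicate point to be the rigorous justification of $C^1$ dependence on $\xi$ and the derivation of the variational equation; everything else---local existence, global extension, positivity, and surjectivity---follows directly from the uniform $L^\infty$ control of $u$ and $u_x$ supplied by the Sobolev embedding.
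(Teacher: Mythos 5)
Your proposal is correct and follows essentially the same route as the paper's source for this lemma: the paper gives no proof of its own but cites Constantin \cite{c2}, where the result is established exactly by this classical argument --- Sobolev embedding $H^s\hookrightarrow C^1_b$ (valid since $s\geq 2>\tfrac32$) to get a bounded, Lipschitz, jointly continuous vector field, Picard--Lindel\"of with the uniform bound ruling out finite-time escape, the variational equation yielding the exponential formula for $y_\xi$, and monotonicity plus the bound $|y(t,\xi)-\xi|\leq LT$ giving the increasing diffeomorphism property. All steps in your write-up, including the joint $C^1$ dependence on $(t,\xi)$ and the surjectivity argument, are sound and complete.
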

\begin{rema}
Lemma \ref{solution} implies that the $L^\infty$ norm of any function $u(t,\cdot)\in L^\infty$ is preserved under the family of diffeomorphisms $y(t,\cdot)$ with any $t\in[0,T]$, that is, $\|u(t,\cdot)\|_{L^\infty}=\|u(t,y(t,\cdot))\|_{L^\infty}$ for any $t\in[0,T]$.
\end{rema}	
Introduce a new variable $U(t,\xi)=u\big(t,y(t,\xi)\big)$. Since $u$ is uniformly bounded in $C\Big([0,T];B^{1+\frac{1}{p}}_{p,1}\Big)\hookrightarrow C\Big([0,T];W^{1,\infty}\cap W^{1,p}\Big)$, we can easily deduce that $y_{\xi}$ is bounded in $L^\infty([0,T];L^\infty)$ by the Gronwall lemma. Hence, $U(t,\xi)$ is bounded in $L^\infty([0,T];W^{1,\infty})$.  	
Owing to \eqref{ODE} and \eqref{scaling}, we can deduce
\begin{align}
y(t,\xi)&=\xi+\int_0^tU{\ud}\tau,\label{y}\\
y_\xi(t,\xi)&=1+\int_0^tU_\xi {\ud}\tau,\label{yxi}\\
U_t(t,\xi)&=\Big(-\partial_{x}{\rm{p}}\ast\big(\frac{1}{2}u^2_x+c_1u^2+c_2u^3+c_3u^4\big)\Big)(t,y(t,\xi))\notag\\
&=\frac{1}{2}\int^{+\infty}_{-\infty}{\rm{sgn}}(y-x)e^{-|y-x|}\big(\frac{1}{2}u^2_x+c_1u^2+c_2u^3+c_3u^4\big){\ud}x.\label{u}
\end{align}
For $t>0$ small enough, we can find two positive constants $C_1,~C_2$ such that $C_1\leq y_\xi\leq C_2$. Without loss of generality, we assume that $t>0$ is sufficiently small, otherwise we can use the continuous method. In this case, we can easily prove that $U(t,\xi)\in L^\infty([0,T];W^{1,p})$. Indeed,
\begin{align*}
&\|U\|_{L^p}^p=\int_{-\infty}^{+\infty}|U(t,\xi)|^p{\ud}\xi=\int_{-\infty}^{+\infty}|u(t,y(t,\xi))|^p\frac{1}{y_\xi}{\ud}y\leq\frac{1}{C_1}\|u\|_{L^p}^p\leq C,\\
&\|U_\xi\|_{L^p}^p=\int_{-\infty}^{+\infty}|U_\xi(t,\xi)|^p{\ud}\xi=\int_{-\infty}^{+\infty}|u_x(t,y(t,\xi))|^p{y_\xi}^{p-1}{\ud}y\leq{C_2}^{p-1}\|u_x\|_{L^p}^p\leq C.
\end{align*}
Thus, we obtain that $U(t,\xi)\in L^\infty([0,T];W^{1,\infty}\cap W^{1,p})$, $y(t,\xi)-\xi\in L^\infty([0,T];W^{1,\infty}\cap W^{1,p})$ and $C_{1}\leq y_\xi\leq C_{2}$ for any $t\in [0,T].$

Now, we prove the uniqueness in Lagrangian coordinates. Suppose that $u_1,u_2$ are two solutions to \eqref{scaling}, so we see for $i=1,2$,
\begin{align}
&\frac{{\ud}}{{\ud}t}U_{i}=\frac{1}{2}\int_{-\infty}^{+\infty}{\rm{sgn}}\big(y_i(t,\xi)-x\big)e^{-|y_i(t,\xi)-x|}\big(c_{1}u_i^2+\frac{1}{2}u_{ix}^2+c_{2}u_{i}^{3}+c_{3}u_{i}^{4}\big){\ud}x,\label{ut}\\
&\frac{{\ud}}{{\ud}t}U_{i\xi}=\Big(c_{1}U_i^2+\frac{1}{2}\frac{U^2_{i\xi}}{y^{2}_{i\xi}}+c_{2}U_i^3+c_{3}U_i^4\Big)y_{i\xi}-\frac{y_{i\xi}}{2}\int_{-\infty}^{+\infty}e^{-|y_i(t,\xi)-x|}\big(c_{1}u_{i}^{2}+\frac{1}{2}u_{ix}^2+c_2u_i^3+c_3u_i^4\big){\ud}x.\label{uxt}
\end{align}
We first estimate $$\int_{-\infty}^{+\infty}{\rm{sgn}}\big(y_1(\xi)-x\big)e^{-|y_1(\xi)-x|}u_{1x}^2{\ud}x-\int_{-\infty}^{+\infty}{\rm{sgn}}\big(y_2(\xi)-x\big)e^{-|y_2(\xi)-x|}u_{2x}^2{\ud}x.$$
Since $y_i$ is monotonically increasing, then ${\rm sgn}\big(y_i(\xi)-y_i(\eta)\big)={\rm sgn}\big(\xi-\eta\big)$ for $i=1,2$. Thus, we have 
\begin{align}
&\int_{-\infty}^{+\infty}{\rm{sgn}}\big(y_1(\xi)-x\big)e^{-|y_1(\xi)-x|}u_{1x}^2{\ud}x-\int_{-\infty}^{+\infty}{\rm{sgn}}\big(y_2(\xi)-x\big)e^{-|y_2(\xi)-x|}u_{2x}^2{\ud}x\notag\\
=&\int_{-\infty}^{+\infty}{\rm{sgn}}\big(y_1(\xi)-y_1(\eta)\big)e^{-|y_1(\xi)-y_1(\eta)|}\frac{U_{1\eta}^2}{y_{1\eta}}{\ud}\eta-\int_{-\infty}^{+\infty}{\rm{sgn}}\big(y_2(\xi)-y_2(\eta)\big)e^{-|y_2(\xi)-y_2(\eta)|}\frac{U_{2\eta}^2}{y_{2\eta}}{\ud}\eta\notag\\
=&\int_{-\infty}^{+\infty}{\rm{sgn}}\big(\xi-\eta\big)e^{-|y_1(\xi)-y_1(\eta)|}\frac{U_{1\eta}^2}{y_{1\eta}}{\ud}\eta-\int_{-\infty}^{+\infty}{\rm{sgn}}\big(\xi-\eta\big)e^{-|y_2(\xi)-y_2(\eta)|}\frac{U_{2\eta}^2}{y_{2\eta}}{\ud}\eta\notag\\
=&\int_{-\infty}^{+\infty}{\rm{sgn}}\big(\xi-\eta\big)\left(e^{-|y_1(\xi)-y_1(\eta)|}-e^{-|y_2(\xi)-y_2(\eta)|}\right)\frac{U_{1\eta}^2}{y_{1\eta}}{\ud}\eta\notag\\
&+\int_{-\infty}^{+\infty}{\rm{sgn}}\big(\xi-\eta\big)e^{-|y_2(\xi)-y_2(\eta)|}\left(\frac{U_{2\eta}^2}{y_{2\eta}}-\frac{U_{1\eta}^2}{y_{1\eta}}\right){\ud}\eta\notag\\
:=&J_1+J_2.\label{u1}
\end{align}
If $\xi>\eta(\text{or}\ \xi<\eta)$, then $y_i(\xi)>y_i(\eta)\big(\text{or}\ y_i(\xi)<y_i(\eta)\big)$, and we can deduce
\begin{align}
J_1=&\int_{-\infty}^{\xi}\left(e^{-(y_1(\xi)-y_1(\eta))}-e^{-(y_2(\xi)-y_2(\eta))}\right)\frac{U_{1\eta}^2}{y_{1\eta}}{\ud}\eta-\int_{\xi}^{+\infty}\left(e^{y_1(\xi)-y_1(\eta)}-e^{y_2(\xi)-y_2(\eta)}\right)\frac{U_{1\eta}^2}{y_{1\eta}}{\ud}\eta\notag\\
=&\int_{-\infty}^{\xi}e^{-(\xi-\eta)}\left(e^{-\int_0^t(U_1(\xi)-U_1(\eta)){\ud}\tau}-e^{-\int_0^t(U_2(\xi)-U_2(\eta)){\ud}\tau}\right)\frac{U_{1\eta}^2}{y_{1\eta}}{\ud}\eta\notag\\
&-\int_{\xi}^{+\infty}e^{\xi-\eta}\left(e^{\int_0^tU_1(\xi)-U_1(\eta){\ud}\tau}-e^{\int_0^tU_2(\xi)-U_2(\eta){\ud}\tau}\right)\frac{U_{1\eta}^2}{y_{1\eta}}{\ud}\eta\notag\\
\leq&C\|U_1-U_2\|_{L^\infty}\Big[\int_{-\infty}^{\xi}e^{-(\xi-\eta)}\frac{U_{1\eta}^2}{y_{1\eta}}{\ud}\eta+\int_{\xi}^{+\infty}e^{\xi-\eta}\frac{U_{1\eta}^2}{y_{1\eta}}{\ud}\eta\Big]\notag\\
\leq&C\|U_1-U_2\|_{L^\infty}\Big[1_{\geq0}(x)e^{-|x|}\ast\frac{U_{1\eta}^2}{y_{1\eta}}+1_{\leq0}(x)e^{-|x|}\ast\frac{U_{1\eta}^2}{y_{1\eta}}\Big].\label{i1}
\end{align}
In the same way, we have
\begin{align}
J_2\leq C\Big[1_{\geq0}(x)e^{-|x|}\ast\Big(|U_{1\eta}-U_{2\eta}|+|y_{1\eta}-y_{2\eta}|\Big)+1_{\leq0}(x)e^{-|x|}\ast\Big(|U_{1\eta}-U_{2\eta}|+|y_{1\eta}-y_{2\eta}|\Big)\Big].\label{i22}
\end{align}
Combining with \eqref{u1}-\eqref{i22}, we find that
\begin{align}
&\int_{-\infty}^{+\infty}{\rm{sgn}}\big(y_1(\xi)-x\big)e^{-|y_1(\xi)-x|}u_{1x}^2{\ud}x-\int_{-\infty}^{+\infty}{\rm{sgn}}\big(y_2(\xi)-x\big)e^{-|y_2(\xi)-x|}u_{2x}^2{\ud}x\notag\\
\leq&C\|U_1-U_2\|_{L^\infty}\Big[1_{\geq0}(x)e^{-|x|}\ast\frac{U_{1\eta}^2}{y_{1\eta}}+1_{\leq0}(x)e^{-|x|}\ast\frac{U_{1\eta}^2}{y_{1\eta}}\Big]\notag\\
&+C\Big[1_{\geq0}(x)e^{-|x|}\ast\Big(|U_{1\eta}-U_{2\eta}|+|y_{1\eta}-y_{2\eta}|\Big)+1_{\leq0}(x)e^{-|x|}\ast\Big(|U_{1\eta}-U_{2\eta}|+|y_{1\eta}-y_{2\eta}|\Big)\Big].\label{i12}
\end{align}
Similar to \eqref{i12}, we get that for $k=2,3,4,$
\begin{align}
&\int_{-\infty}^{+\infty}{\rm{sgn}}\big(y_1(\xi)-x\big)e^{-|y_1(\xi)-x|}u_{1}^k{\ud}x-\int_{-\infty}^{+\infty}{\rm{sgn}}\big(y_2(\xi)-x\big)e^{-|y_2(\xi)-x|}u_{2}^k{\ud}x\notag\\
\leq&C\|U_1-U_2\|_{L^\infty}\Big[1_{\geq0}(x)e^{-|x|}\ast\Big( U_{1\eta}^ky_{1\eta}\Big)+1_{\leq0}(x)e^{-|x|}\ast\Big( U_{1\eta}^ky_{1\eta}\Big)\Big]\notag\\
&+C\Big[1_{\geq0}(x)e^{-|x|}\ast\Big(|U_{1}-U_{2}|+|y_{1\eta}-y_{2\eta}|\Big)+1_{\leq0}(x)e^{-|x|}\ast\Big(|U_{1}-U_{2}|+|y_{1\eta}-y_{2\eta}|\Big)\Big].\label{j12}
\end{align}
It follows from \eqref{ut}, \eqref{i12} and \eqref{j12} that 
\begin{align*}
|U_1(t)-U_2(t)|\leq&|U_1(0)-U_2(0)|+C\int_0^t\Big[1_{\geq0}(x)e^{-|x|}\ast\Big(|U_{1}-U_{2}|+|U_{1\eta}-U_{2\eta}|+|y_{1\eta}-y_{2\eta}|\Big)\Big]\\
&+\Big[1_{\leq0}(x)e^{-|x|}\ast\Big(|U_{1}-U_{2}|+|U_{1\eta}-U_{2\eta}|+|y_{1\eta}-y_{2\eta}|\Big)\Big]\\
&+\|U_1-U_2\|_{L^\infty}\Big[1_{\geq0}(x)e^{-|x|}\ast\Big(\frac{U_{1\eta}^2}{y_{1\eta}}+U_{1\eta}^2y_{1\eta}+U_{1\eta}^3y_{1\eta}+U_{1\eta}^4y_{1\eta}\Big)\\
&\qquad\qquad\qquad\quad+1_{\leq0}(x)e^{-|x|}\ast\Big(\frac{U_{1\eta}^2}{y_{1\eta}}+U_{1\eta}^2y_{1\eta}+U_{1\eta}^3y_{1\eta}+U_{1\eta}^4y_{1\eta}\Big)\Big]{\ud}\tau
\end{align*}
which infers that
\begin{align}
\|U_1-U_2\|_{L^\infty\cap L^p}\leq&\|U_1(0)-U_2(0)\|_{L^\infty\cap L^p}\notag\\
&+C\int_0^t\|U_1-U_2\|_{L^\infty\cap L^p}+\|U_{1\xi}-U_{2\xi}\|_{L^\infty\cap L^p}+\|y_{1\xi}-y_{2\xi}\|_{L^\infty\cap L^p}{\ud}\tau.\label{U12}
\end{align}
Noticing that $U_{i\xi}(i=1,2)$ satifies \eqref{uxt}, we can similarly obtain
\begin{align}
\|U_{1\xi}-U_{2\xi}\|_{L^\infty\cap L^p}\leq&\|U_{1\xi}(0)-U_{2\xi}(0)\|_{L^\infty\cap L^p}\notag\\
&+C\int_0^t\|U_1-U_2\|_{L^\infty\cap L^p}+\|U_{1\xi}-U_{2\xi}\|_{L^\infty\cap L^p}+\|y_{1\xi}-y_{2\xi}\|_{L^\infty\cap L^p}{\ud}\tau.\label{Ux12}
\end{align}
Since $y_i$ satisfies \eqref{y}--\eqref{yxi}, it follows that
\begin{align}
&\|y_{1}-y_{2}\|_{L^\infty\cap L^p}\leq C\int_0^t\|U_{1}-U_{2}\|_{L^\infty\cap L^p}{\ud}\tau,\label{y12}\\
&\|y_{1\xi}-y_{2\xi}\|_{L^\infty\cap L^p}\leq C\int_0^t\|U_{1\xi}-U_{2\xi}\|_{L^\infty\cap L^p}{\ud}\tau.\label{yxi12}	
\end{align}
Inequalities \eqref{U12}-\eqref{yxi12} and Grownall lemma yield
\begin{align*}
\|U_1-U_2\|_{W^{1,\infty}\cap W^{1,p}}+\|y_{1}-y_{2}\|_{W^{1,\infty}\cap W^{1,p}}\leq C\|U_1(0)-U_2(0)\|_{W^{1,\infty}\cap W^{1,p}}\leq C\|u_1(0)-u_2(0)\|_{B^{1+\frac{1}{p}}_{p,1}}.	
\end{align*}
Returning to the original coordinates, we see that
\begin{align*}
\|u_1-u_2\|_{L^p}\leq& C\|u_1\circ y_1-u_2\circ y_1\|_{L^p}\\
\leq& C\|u_1\circ y_1-u_2\circ y_2+u_2\circ y_2-u_2\circ y_1\|_{L^p}\\
\leq& C\|U_1-U_2\|_{L^p}+C\|u_{2x}\|_{L^\infty}\|y_1-y_2\|_{L^p}\\
\leq& C\|u_1(0)-u_2(0)\|_{B^{1+\frac{1}{p}}_{p,1}}.
\end{align*}
So if $u_1(0)=u_2(0)$, we can  immediately obtain the uniqueness.\\
	
\textbf{Step 4. The continuous dependence.}
	
Let $u^n_0\rightarrow u^{\infty}_0$ in $B^{1+\frac{1}{p}}_{p,1}$ for $n\in\mathbb{N}$. Then we have $\partial_{x}u^n_0\rightarrow \partial_{x}u^{\infty}_0$ in $B^{\frac{1}{p}}_{p,1}$. For $n\in\overline{\mathbb{N}}:=\mathbb{N}\cup{\infty}$, denote by $u^n$ the solution to \eqref{scaling} with initial value $u^n_0$. By \textbf{Step 1--Step 2}, we know that for $n\in\overline{\mathbb{N}}$
\begin{align*}
u^n\in C\big([0,T];B^{1+\frac{1}{p}}_{p,1}\big),\quad\partial_{x}u^n\in C\big([0,T];B^{\frac{1}{p}}_{p,1}\big)\quad \text{and}\quad \|u^n\|_{L^\infty([0,T];B^{1+\frac{1}{p}}_{p,1})}\leq C\|u^n_0\|_{B^{1+\frac{1}{p}}_{p,1}}.
\end{align*}
Owing to \textbf{Step 3}, we have for all $t\in[0,T]$,
\begin{align*}
\|\big(u^n-u^\infty\big)(t)\|_{L^p}\leq C\|u_0^n-u_0^\infty\|_{B^{1+\frac{1}{p}}_{p,1}}.
\end{align*}
By the embedding inequality $L^p\hookrightarrow B^0_{p,\infty}$, we get for all $t\in[0,T]$,
\begin{align*}
\|\big(u^n-u^\infty\big)(t)\|_{B^0_{p,\infty}}\leq\|\big(u^n-u^\infty\big)(t)\|_{L^p}\leq C\|u_0^n-u_0^\infty\|_{B^{1+\frac{1}{p}}_{p,1}},	
\end{align*}
which implies that $u^n$ tends to $u^\infty$ in $C\big([0,T];B^{0}_{p,\infty}\big)$. Taking advantage of the interpolation inequality, we see that $u^n\rightarrow u^\infty$ in $C\big([0,T];B^{1+\frac{1}{p}-\epsilon}_{p,1}\big)$ for any $\epsilon>0$. Choosing $\epsilon=1$, we find
\begin{align}
u^n\rightarrow u^\infty\qquad\text{in}\quad C\big([0,T];B^{\frac{1}{p}}_{p,1}\big).\label{untend}
\end{align}
Next, we just have to prove $\partial_{x}u^n\rightarrow \partial_{x}u^\infty$ in $C\big([0,T];B^{\frac{1}{p}}_{p,1}\big)$. Before that, let's present a useful lemma.
\begin{lemm}\label{continuous}\cite{book,liy}
Define $\overline{\mathbb{N}}=\mathbb{N}\cup\{\infty\}$. Let $F\in L^\infty\big([0,T];B^{\frac{1}{p}}_{p,1}\big)$, $a_0\in B^{\frac{1}{p}}_{p,1}$ and  $A^n\rightarrow A^\infty$ in $L^1\big([0,T];B^{\frac{1}{p}}_{p,1}\big)$. For $n\in\overline{\mathbb{N}}$, denote by $a^n$ the solution of 
\begin{equation}\label{an}
\left\{\begin{array}{l}
\partial_{t}a^n+A^n\partial_{x}a^n=F,\\
a^n(0,x)=a_0(x).
\end{array}\right.
\end{equation} 	
Then the sequence $\{a^n\}_{n\in\mathbb{N}}$ converges to $a^\infty$ in $C\big([0,T];B^{\frac{1}{p}}_{p,1}\big)$.\end{lemm}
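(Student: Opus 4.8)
The plan is to reduce everything to a single transport equation for the difference and then confront the one genuine difficulty, namely a loss of one derivative in the forcing. Setting $w^n:=a^n-a^\infty$ and subtracting the two copies of \eqref{an} gives
\begin{equation*}
\partial_t w^n+A^n\partial_x w^n=(A^\infty-A^n)\partial_x a^\infty,\qquad w^n(0)=0,
\end{equation*}
so $w^n$ solves a transport problem of the form \eqref{transport} with velocity $A^n$, zero data, and forcing $g^n:=(A^\infty-A^n)\partial_x a^\infty$. First I would record the uniform bounds: since $A^n\to A^\infty$ in $L^1([0,T];B^{\frac1p}_{p,1})$ and, in the present setting, the velocities are uniformly bounded in $B^{1+\frac1p}_{p,1}$ (so $\partial_x A^n$ is uniformly bounded in $B^{\frac1p}_{p,1}\hookrightarrow B^{\frac1p}_{p,\infty}\cap L^\infty$), Lemma \ref{priori estimate} shows that $a^n$, and hence $w^n$, is uniformly bounded in $C([0,T];B^{\frac1p}_{p,1})$ and that the factors $e^{CV^n}$ appearing in the transport estimate are uniformly bounded in $n$. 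This reduces the lemma to showing $\|w^n\|_{L^\infty([0,T];B^{\frac1p}_{p,1})}\to0$.

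The obstacle, which is the whole point of the lemma, is that $\partial_x a^\infty$ only lies in $B^{\frac1p-1}_{p,1}$, so $g^n$ cannot be estimated in $B^{\frac1p}_{p,1}$ and Lemma \ref{priori estimate} cannot be applied directly; moreover the product estimate of Lemma \ref{product}(3) is exactly borderline at the index $\frac1p-1$ and fails. To get around this I would split $a^\infty=S_N a^\infty+R_N a^\infty$ with $R_N:=\mathrm{Id}-S_N$, using $r=1$ to guarantee $\|R_N a^\infty\|_{L^\infty([0,T];B^{\frac1p}_{p,1})}\to0$ as $N\to\infty$, and then decompose $w^n=w^n_1+w^n_2$ by linearity, where $w^n_i$ solves \eqref{transport} with velocity $A^n$, zero data, and forcing $g^n_1:=(A^\infty-A^n)\partial_x S_N a^\infty$, respectively $g^n_2:=(A^\infty-A^n)\partial_x R_N a^\infty$.

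For the low-frequency piece $w^n_1$ the forcing is harmless: since $\partial_x S_N a^\infty$ has frequencies $\lesssim 2^N$, Bernstein gives $\|\partial_x S_N a^\infty\|_{B^{\frac1p}_{p,1}}\lesssim 2^N\|a^\infty\|_{B^{\frac1p}_{p,1}}$, and the algebra property (Lemma \ref{product}(1)) then yields $\|g^n_1\|_{L^1([0,T];B^{\frac1p}_{p,1})}\lesssim 2^N\|a^\infty\|_{L^\infty_TB^{\frac1p}_{p,1}}\|A^\infty-A^n\|_{L^1_TB^{\frac1p}_{p,1}}$, which tends to $0$ as $n\to\infty$ for each fixed $N$; Lemma \ref{priori estimate} then gives $\|w^n_1\|_{L^\infty_TB^{\frac1p}_{p,1}}\to0$. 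The high-frequency piece is where the derivative loss must be resolved: I would introduce $\psi^n:=w^n_2+R_N a^\infty$ and compute, using $\partial_t a^\infty+A^\infty\partial_x a^\infty=F$ together with $[\partial_x,R_N]=0$,
\begin{equation*}
\partial_t\psi^n+A^n\partial_x\psi^n=R_N F+[A^\infty,R_N]\partial_x a^\infty,\qquad \psi^n(0)=R_N a_0,
\end{equation*}
so that the troublesome forcing $(A^\infty-A^n)\partial_x R_N a^\infty$ is traded for the commutator $[A^\infty,R_N]\partial_x a^\infty=-\sum_{j\ge N}[\Delta_j,A^\infty]\partial_x a^\infty$, which gains back the lost derivative.

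Finally I would close the estimate. The data $R_N a_0$ and the forcing $R_N F$ are tails of $B^{\frac1p}_{p,1}$-summable series, hence small in $B^{\frac1p}_{p,1}$ (respectively in $L^1([0,T];B^{\frac1p}_{p,1})$) for $N$ large, uniformly in $n$. The commutator is controlled by the standard one-derivative-gaining estimate
\begin{equation*}
\|[\Delta_j,A^\infty]\partial_x a^\infty\|_{L^p}\lesssim c_j\,2^{-j/p}\|\partial_x A^\infty\|_{B^{\frac1p}_{p,1}}\|a^\infty\|_{B^{\frac1p}_{p,1}},\qquad \|(c_j)_j\|_{\ell^1}=1,
\end{equation*}
so its $B^{\frac1p}_{p,1}$-norm is dominated by the $\ell^1$-tail $\sum_{j\ge N}c_j$, again small for large $N$ uniformly in $n$ (here one uses the uniform $B^{1+\frac1p}_{p,1}$-control of the velocities, which provides $\partial_x A^\infty\in B^{\frac1p}_{p,1}$). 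Lemma \ref{priori estimate} then makes $\|\psi^n\|_{L^\infty_TB^{\frac1p}_{p,1}}$, and with it $\|w^n_2\|_{L^\infty_TB^{\frac1p}_{p,1}}\le\|\psi^n\|_{L^\infty_TB^{\frac1p}_{p,1}}+\|R_N a^\infty\|_{L^\infty_TB^{\frac1p}_{p,1}}$, uniformly small in $n$ once $N$ is large. Choosing first $N$ large to control $w^n_2$ and then $n$ large to control $w^n_1$ yields $w^n\to0$ in $C([0,T];B^{\frac1p}_{p,1})$, which is the assertion. I expect the commutator estimate — specifically obtaining the one-derivative gain at the critical index $\frac1p$ without any $L^\infty$-control of $\partial_x a^\infty$ — to be the main technical obstacle.
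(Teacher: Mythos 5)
The paper offers no proof of this lemma at all: it is quoted from \cite{book,liy}, and the argument in those references is the standard ``mollify the data'' one. There one sets, for each $N$, $a^{n,N}$ to be the solution of the transport problem with the \emph{same} velocity $A^n$ but smoothed data $S_N a_0$ and smoothed forcing $S_N F$, and then runs a three-term triangle inequality: (i) $a^n-a^{n,N}$ solves transport with data $R_N a_0$ and forcing $R_N F$, so by the a priori estimate of Lemma \ref{priori estimate} it is small in $C([0,T];B^{1/p}_{p,1})$ \emph{uniformly in} $n$ once $N$ is large (this is where $r=1$ enters); (ii) for fixed $N$, $a^{n,N}-a^{\infty,N}$ solves transport with zero data and forcing $(A^\infty-A^n)\partial_x a^{\infty,N}$, and since the smoothed solution propagates one extra derivative, $\|a^{\infty,N}\|_{B^{1+1/p}_{p,1}}\lesssim 2^N$, the forcing is controlled by the algebra property and tends to $0$ in $L^1_TB^{1/p}_{p,1}$ as $n\to\infty$; (iii) $a^{\infty,N}\to a^\infty$ as $N\to\infty$ by (i). The loss of one derivative is thus absorbed entirely by the higher-regularity propagation for smoothed data, and no commutator estimate is ever needed. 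Your route is genuinely different: you decompose the fixed limit solution $a^\infty$ itself rather than re-solving with mollified data, and you repair the derivative loss by trading the bad forcing for the commutator $[A^\infty,R_N]\partial_x a^\infty$ via the corrected unknown $\psi^n=w^n_2+R_N a^\infty$ (your algebra for the $\psi^n$ equation checks out). This is a legitimate ``tame estimate'' style proof and, unlike the reference proof, never uses the $B^{1+1/p}_{p,1}$-propagation for the auxiliary problems.

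Two points in your version are stated more casually than they deserve. First, the passage from the termwise bound $\|[\Delta_j,A^\infty]\partial_x a^\infty\|_{L^p}\lesssim c_j2^{-j/p}$ to a bound on $\big\|\sum_{j\geq N}[\Delta_j,A^\infty]\partial_x a^\infty\big\|_{B^{1/p}_{p,1}}$ by the tail $\sum_{j\geq N}c_j$ is not automatic: the summands are \emph{not} spectrally localized in annuli of size $2^j$ (the pieces $T_{\Delta_j\partial_x a^\infty}A^\infty$ and $R(A^\infty,\Delta_j\partial_x a^\infty)$ carry frequencies of $A^\infty$ above $2^{j-C}$), so applying $\Delta_k$ to the sum mixes infinitely many terms and one must redo the Bony bookkeeping — doable with the standard machinery behind Lemma 2.100 of \cite{book}, but it is precisely the step you wave through. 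Second, the smallness (as opposed to boundedness) statements — $\|R_Na^\infty\|_{L^\infty_TB^{1/p}_{p,1}}\to0$ and the uniform-in-$t$ decay of the commutator tail — require compactness of the orbits $\{a^\infty(t)\}$, $\{A^\infty(t)\}$ in the relevant spaces, i.e.\ time-continuity with values in $B^{1/p}_{p,1}$ and $B^{1+1/p}_{p,1}$; your parenthetical ``using $r=1$'' gives pointwise-in-$t$ decay only. Finally, you are right to flag that the lemma as stated is under-hypothesized: both your argument and the reference one need the velocities uniformly bounded in $L^1_T B^{1+1/p}_{p,1}$ (true in the paper's application, where $A^n=u^n$), so making that hypothesis explicit is appropriate rather than a defect of your proof.
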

We continue proving $\partial_{x}u^n\rightarrow \partial_{x}u^\infty$ in $C\big([0,T];B^{\frac{1}{p}}_{p,1}\big)$. For simplicity, let $v^n=\partial_{x}u^n,\ v^\infty=\partial_{x}u^\infty$. Split $v^n$ into $w^n+z^n$ with $(w^n,z^n)$ satisfying 
\begin{equation*}
\left\{\begin{array}{l}
\partial_{t}w^n+u^n\partial_{x}w^n=F^\infty,\\
w^n(0,x)=v^\infty_0=\partial_{x}u^\infty_0
\end{array}\right.
\end{equation*} 
and 
\begin{equation*}
\left\{\begin{array}{l}
\partial_{t}z^n+u^n\partial_{x}z^n=F^n-F^\infty, \\
z^n(0,x)=v^n_0-v^\infty_0=\partial_{x}u^n_0-\partial_{x}u^\infty_0
\end{array}\right.
\end{equation*}
where 
\begin{align*}
&F^n=c_1(u^n)^2-\frac{1}{2}(u_x^n)^2+c_2(u^n)^3+c_3(u^n)^4-{\rm{p}}\ast\Big(c_1(u^n)^2+\frac{1}{2}(u_x^n)^2+c_2(u^n)^3+c_3(u^n)^4\Big),\\
&F^\infty=c_1(u^\infty)^2-\frac{1}{2}(u_x^\infty)^2+c_2(u^\infty)^3+c_3(u^\infty)^4-{\rm{p}}\ast\Big(c_1(u^\infty)^2+\frac{1}{2}(u_x^\infty)^2+c_2(u^\infty)^3+c_3(u^\infty)^4\Big).
\end{align*}
Because $\{u^n\}_{n\in\overline{\mathbb{N}}}$ is bounded in $C\big([0,T];B^{1+\frac{1}{p}}_{p,1}\big)$, then $\{\partial_{x}u^n\}_{n\in\overline{\mathbb{N}}}$ and $\{F^n\}_{n\in\overline{\mathbb{N}}}$ are bounded in $C\big([0,T];B^{\frac{1}{p}}_{p,1}\big)$. Notice again that $u^n\rightarrow u^\infty$ in $C\big([0,T];B^{\frac{1}{p}}_{p,1}\big)$. Lemma \ref{continuous} thus ensures that
\begin{align}
w^n\rightarrow w^\infty\qquad \text{in}\quad  C\big([0,T];B^{\frac{1}{p}}_{p,1}\big).\label{winfty}
\end{align} 
Next, according to Lemma \ref{existence}--\ref{priori estimate}, for any $t\in[0,T]$, we obtain $z^\infty=0$. Noting that the operator ${\rm{p}}\ast$ is a $S^{-2}$-mutiplier and  $\{u^n\}_{n\in\overline{\mathbb{N}}}$ is bounded in $L^{\infty}\big([0,T];B^{1+\frac{1}{p}}_{p,1}\big)$, we have 
\begin{align*}
\|F^n-F^\infty\|_{B^{\frac{1}{p}}_{p,1}}\leq C\left(\|u^n-u^\infty\|_{B^{\frac{1}{p}}_{p,1}}+\|v^n-v^\infty\|_{B^{\frac{1}{p}}_{p,1}}\right).
\end{align*}
It follows that for all $n\in\mathbb{N}$,
\begin{align}
\|z^n(t)\|_{B^{\frac{1}{p}}_{p,1}}
\leq& e^{\int_0^t\|\partial_{x}u^n(t')\|_{B^{\frac{1}{p}}_{p,1}}{\ud}t'}\left(\|v^n_0-v^\infty_0\|_{B^{\frac{1}{p}}_{p,1}}+\int_0^t\|F^n-F^\infty\|_{B^{\frac{1}{p}}_{p,1}}{\ud}\tau\right)\notag\\
\leq&C\left(\|v^n_0-v^\infty_0\|_{B^{\frac{1}{p}}_{p,1}}+\int_0^t\|u^n-u^\infty\|_{B^{\frac{1}{p}}_{p,1}}+\|w^n-w^\infty\|_{B^{\frac{1}{p}}_{p,1}}+\|z^n\|_{B^{\frac{1}{p}}_{p,1}}{\ud}\tau\right).\label{zn}
\end{align}
Using the facts that
\begin{itemize}
\item [-]~~$v_0^n$ tends to $v_0^{\infty}$ in $B^{\frac{1}{p}}_{p,1}$;
\item [-]~~$u^n$ tends to $u^{\infty}$ in $C\big([0,T];B^{\frac{1}{p}}_{p,1}\big)$;
\item [-]~~$w^n$ tends to $w^{\infty}$ in $C\big([0,T];B^{\frac{1}{p}}_{p,1}\big)$,
\end{itemize}
and then applying the Gronwall lemma, we conclude that $z^n$ tends to $0$ in $C\big([0,T];B^{\frac{1}{p}}_{p,1}\big)$.\\
Hence,
\begin{align*}
\|v^n-v^\infty\|_{L^\infty\big([0,T];B^{\frac{1}{p}}_{p,1}\big)}\leq&\|w^n-w^\infty\|_{L^\infty\big([0,T];B^{\frac{1}{p}}_{p,1}\big)}+\|z^n-z^\infty\|_{L^\infty\big([0,T];B^{\frac{1}{p}}_{p,1}\big)}\\
\leq&\|w^n-w^\infty\|_{L^\infty\big([0,T];B^{\frac{1}{p}}_{p,1}\big)}+\|z^n\|_{L^\infty\big([0,T];B^{\frac{1}{p}}_{p,1}\big)}\qquad\qquad\rightarrow 0\quad\text{as}\ n\rightarrow\infty,
\end{align*}
that is 
\begin{align*}
\partial_{x}u^n\rightarrow \partial_{x}u^\infty\qquad\text{in}\quad C\big([0,T];B^{\frac{1}{p}}_{p,1}\big).
\end{align*}
Consequently, we prove the continuous dependence.
	
In conclusion, combining with \textbf{Step 1--Step 4}, we complete the proof of Theorem \ref{th}.
\end{proof}

\section{Non-uniform continuous dependence in both supercritical and critical Besov spaces}
\par
In this section, we investigate the non-uniform continuous dependence  of the Cauchy problem for Eq. \eqref{scaling} in Besov space $B^{s}_{p,r},\ s>\max\{\frac{3}{2},1+\frac{1}{p}\},1\leq p\leq+\infty,\ 1\leq r<+\infty$ or $s=1+\frac{1}{p},\ 1\leq p\leq2,\ r=1$.

Before that, we introduce smooth, radial cut-off functions in frequency space. Let $\hat{\psi}\in C_0^\infty(\mathbb{R})$ be an even, real-valued and non-negative function on $\mathcal{D}$ and satisfy
\begin{equation}\label{psi}
\hat{\psi}(\xi)=\left\{\begin{array}{ll}
		1,\quad if\ |\xi|\leq\frac{1}{4,}\\
		0,\quad if\ |\xi|\geq\frac{1}{2}.
	\end{array}\right.
\end{equation}
From Fourier inversion formula, we can easily deduce that 
\begin{align*}
&\psi(x)=\frac{1}{2\pi}\int\limits_{\mathbb{R}}e^{ix\xi}\hat{\psi}(\xi){\ud}\xi;\\
&\partial_{x}\psi(x)=\frac{1}{2\pi}\int\limits_{\mathbb{R}}i\xi e^{ix\xi}\hat{\psi}(\xi){\ud}\xi,
\end{align*}
which implies by the Fubini theorem that
\begin{align*}
&\|\psi\|_{L^\infty}=\sup\limits_{x\in\mathbb{R}}\frac{1}{2\pi}\bigg|\int\limits_{\mathbb{R}}\cos(x\xi)\hat{\psi}(\xi){\ud}\xi\bigg|\leq\frac{1}{2\pi}\int\limits_{\mathbb{R}}\hat{\psi}(\xi){\ud}\xi\leq C,\\
&\|\partial_{x}\psi\|_{L^\infty}\leq\frac{1}{2\pi}\int\limits_{\mathbb{R}}|\xi|\hat{\psi}(\xi){\ud}\xi\leq C,\\
&\psi(0)=\frac{1}{2\pi}\int\limits_{\mathbb{R}}\hat{\psi}(\xi){\ud}\xi.
\end{align*}

Here is the crucial lemmas that we will use later.
\begin{lemm}\label{bspr}
Let $p,\ r\in[1,+\infty]$ and $s>1$. Let $u\in L^{\infty}([0,T];B^{s}_{p,r})$ solve \eqref{scaling} with initial data $u_{0}\in B^{s}_{p,r}$. There exist constants $C, C'$ such that for all $t\in[0,T]$, we have
\begin{align*}
&\|u(t)\|_{B^{s}_{p,r}}\leq\|u_{0}\|_{B^{s}_{p,r}}e^{C\int_{0}^{t}\|u_{x}\|_{L^{\infty}}+\|u\|_{L^{\infty}}+\|u\|_{L^{\infty}}^{2}+\|u\|_{L^{\infty}}^{3}{\ud}\tau},\\
&\|u_{x}\|_{L^{\infty}}+\|u\|_{L^{\infty}}+\|u\|_{L^{\infty}}^{2}+\|u\|_{L^{\infty}}^{3}\leq\Big(\|u_{0x}\|_{L^{\infty}}+\|u_{0}\|_{L^{\infty}}+\|u_{0}\|_{L^{\infty}}^{2}+\|u_{0}\|_{L^{\infty}}^{3}\Big)e^{C'\int_{0}^{t}\|u_{x}\|_{L^{\infty}}{\ud}\tau}.
\end{align*}
\end{lemm}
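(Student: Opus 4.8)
The plan is to read \eqref{scaling} as the transport equation $\partial_t u+u\partial_x u=G(u)$ and to prove the two bounds separately: the first lives in the Besov scale and follows from the transport estimate of Lemma \ref{priori estimate}, while the second is an $L^\infty$ bound obtained along the characteristics of \eqref{ODE}. For the first inequality I would apply Lemma \ref{priori estimate} with $v=f=u$ (the special case $a=1,\ b=0$, for which $V'(t)=\|u_x\|_{L^\infty}$) and source $g=G(u)$, giving
\[
\|u(t)\|_{B^s_{p,r}}\leq e^{C\int_0^t\|u_x\|_{L^\infty}\ud\tau}\Big(\|u_0\|_{B^s_{p,r}}+\int_0^t e^{-C\int_0^{t'}\|u_x\|_{L^\infty}\ud\tau}\|G(u)(t')\|_{B^s_{p,r}}\ud t'\Big).
\]
Since $\partial_x{\rm p}\ast$ is an $S^{-1}$-multiplier (Proposition \ref{Besov}(6)) it sends $B^{s-1}_{p,r}$ into $B^s_{p,r}$, so it suffices to bound $\tfrac12 u_x^2+c_1u^2+c_2u^3+c_3u^4$ in $B^{s-1}_{p,r}$; as $s-1>0$ the space $L^\infty\cap B^{s-1}_{p,r}$ is an algebra (Lemma \ref{product}(1)), and peeling off one $L^\infty$ factor from each monomial yields $\|G(u)\|_{B^s_{p,r}}\leq C(\|u_x\|_{L^\infty}+\|u\|_{L^\infty}+\|u\|_{L^\infty}^2+\|u\|_{L^\infty}^3)\|u\|_{B^s_{p,r}}$. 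Substituting this and invoking Gronwall (Lemma \ref{gwl}) produces the first estimate; this step is routine.

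For the second inequality I would move to Lagrangian coordinates along the flow $y$ of \eqref{ODE}, so that $\tfrac{\ud}{\ud t}(u\circ y)=G(u)\circ y$ and, after differentiating \eqref{scaling} in $x$,
\[
\tfrac{\ud}{\ud t}(u_x\circ y)=\big(\partial_x G(u)-u_x^2\big)\circ y,\qquad \partial_x G(u)=\Big(\tfrac12 u_x^2+\textstyle\sum_{k=2}^4 c_{k-1}u^k\Big)-{\rm p}\ast\Big(\tfrac12 u_x^2+\textstyle\sum_{k=2}^4 c_{k-1}u^k\Big),
\]
where I used the resolvent identity $\partial_x^2{\rm p}\ast f={\rm p}\ast f-f$. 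Taking $L^\infty$ norms and using $\|{\rm p}\|_{L^1}=\|\partial_x{\rm p}\|_{L^1}=1$ controls $\tfrac{\ud}{\ud t}\|u\|_{L^\infty}$ and $\tfrac{\ud}{\ud t}\|u_x\|_{L^\infty}$.

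The crux, and the step I expect to be the main obstacle, is that the polynomial terms $u^k$ naively contribute $\|u\|_{L^\infty}^k$ with no factor of $\|u_x\|_{L^\infty}$, which would destroy an exponent containing $\int\|u_x\|_{L^\infty}$ alone. The remedy is the algebraic identity $u^k-{\rm p}\ast u^k=-k\,\partial_x{\rm p}\ast(u^{k-1}u_x)$ (obtained by rewriting $u^k-{\rm p}\ast u^k=-\partial_x^2{\rm p}\ast u^k$ and moving the surviving derivative onto $\tfrac1k(u^k)_x=u^{k-1}u_x$), which turns each $\|u\|_{L^\infty}^k$ into $\|u\|_{L^\infty}^{k-1}\|u_x\|_{L^\infty}$; the genuinely quadratic terms $u_x^2$ are bounded directly by $\|u_x\|_{L^\infty}^2$. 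Writing $m:=\|u_x\|_{L^\infty}+\|u\|_{L^\infty}+\|u\|_{L^\infty}^2+\|u\|_{L^\infty}^3$, this gives $\tfrac{\ud}{\ud t}\|u\|_{L^\infty}\leq C\|u_x\|_{L^\infty}m$ and $\tfrac{\ud}{\ud t}\|u_x\|_{L^\infty}\leq C\|u_x\|_{L^\infty}m$. Differentiating $m$ and inserting these two bounds leaves a factor polynomial in $\|u\|_{L^\infty}$, which I would absorb into the constant using the a priori bound $\|u\|_{L^\infty}\leq C\|u\|_{L^\infty([0,T];B^s_{p,r})}<\infty$ coming from $B^s_{p,r}\hookrightarrow L^\infty$; this reduces the differential inequality to $\tfrac{\ud}{\ud t}m\leq C'\|u_x\|_{L^\infty}m$, and Gronwall yields the second estimate.
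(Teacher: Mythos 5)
The paper never writes out a proof of this lemma at all---it only remarks that the argument is ``similar to that of Lemma 3.26 in \cite{book}''---and your proposal is precisely that method adapted to the R-CH nonlinearity: the transport estimate of Lemma \ref{priori estimate} with $V'(t)=\|u_x\|_{L^\infty}$ combined with the $S^{-1}$-multiplier and algebra bounds giving $\|G(u)\|_{B^s_{p,r}}\leq C\big(\|u_x\|_{L^\infty}+\|u\|_{L^\infty}+\|u\|_{L^\infty}^2+\|u\|_{L^\infty}^3\big)\|u\|_{B^s_{p,r}}$ for the first inequality, and characteristics together with the derivative-shifting identity $u^k-{\rm p}\ast u^k=-k\,\partial_x{\rm p}\ast(u^{k-1}u_x)$ (indeed the key point that makes an exponent containing only $\int\|u_x\|_{L^\infty}$ possible) for the second; both identities and all the individual bounds you state check out. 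The one caveat is the point you yourself flag: because of the cubic and quartic terms, differentiating $m=\|u_x\|_{L^\infty}+\|u\|_{L^\infty}+\|u\|_{L^\infty}^2+\|u\|_{L^\infty}^3$ leaves the factor $1+2\|u\|_{L^\infty}+3\|u\|_{L^\infty}^2$, which cannot be dominated by a universal multiple of $m$ (the offending products $\|u\|_{L^\infty}^2\|u_x\|_{L^\infty}$, $\|u\|_{L^\infty}^4$, $\|u\|_{L^\infty}^5$ exceed the powers present in $m$), so your $C'$ genuinely depends on $\|u\|_{L^\infty([0,T]\times\mathbb{R})}$ rather than being absolute---in contrast with the purely quadratic CH case of \cite{book}, where $m=\|u\|_{W^{1,\infty}}$ and the constant is universal. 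This is consistent with the lemma as stated (the solution $u$ is fixed, and $B^s_{p,r}\hookrightarrow L^\infty$ since $s>1\geq\frac1p$ makes the absorbed quantity finite) and harmless for the paper's purposes, since Lemma \ref{bspr} is only ever applied to families $u^n$ with $\|u^n\|_{L^\infty([0,T];B^{1+\frac1p}_{p,1})}$ uniformly bounded, for which your constant is uniform.
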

\begin{proof}
The proof is similar to that of Lemma 3.26 in \cite{book}, and here we omit it.
\end{proof}
\begin{lemm}\label{psii}
Let $1\leq a\leq\infty$. Then there is a constant $A>0$ such that 
\begin{align}\label{psiii}
\liminf\limits_{n\rightarrow\infty}\bigg\|\psi^2(\cdot)\cos\Big(\frac{33}{24}2^n\cdot\Big)\bigg\|_{L^a}\geq A.
\end{align}
\end{lemm}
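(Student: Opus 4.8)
The plan is to show that the $L^a$ norm of $\psi^2(x)\cos\bigl(\frac{33}{24}2^n x\bigr)$ stays bounded below uniformly in $n$. The natural strategy is to reduce the claim to a single scalar quantity that does not depend on $n$, namely the value $\psi^2(0)$, by exploiting that $\psi^2$ is a fixed Schwartz function while only the oscillating factor depends on $n$. I would first record that $\psi^2$ is smooth, even, nonnegative, and has $\psi(0)=\frac{1}{2\pi}\int_{\mathbb{R}}\hat\psi(\xi)\,\mathrm{d}\xi>0$, so in particular $\psi^2(0)>0$; this positivity is what will eventually be the lower bound $A$.

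For the case $a=\infty$ the argument is essentially immediate: I would evaluate the sup at a point where the cosine equals $1$. Concretely, pick $x_n=0$, where $\cos\bigl(\frac{33}{24}2^n\cdot 0\bigr)=1$, so $\bigl\|\psi^2\cos(\tfrac{33}{24}2^n\cdot)\bigr\|_{L^\infty}\ge \psi^2(0)>0$, giving the claim with $A=\psi^2(0)$ independently of $n$. For general $1\le a<\infty$ the cleanest route is to avoid computing the oscillatory integral exactly and instead bound the $L^a$ norm below by an $L^2$-type or averaging estimate. The key elementary fact is that for large frequency $\lambda=\frac{33}{24}2^n$ one has $\cos^2(\lambda x)=\frac12+\frac12\cos(2\lambda x)$, and the oscillatory piece $\int \psi^2(x)^{a/2}\cos(2\lambda x)\,\mathrm dx$ (or, more simply, $\int (\psi^2(x))^{a}\cos(2\lambda x)\,\mathrm dx$ after writing things through $\cos^2$) tends to $0$ as $n\to\infty$ by the Riemann–Lebesgue lemma, since $(\psi^2)^{a/2}\in L^1$.

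Thus the plan for $a<\infty$ is: write
\begin{align*}
\bigl\|\psi^2\cos(\lambda\cdot)\bigr\|_{L^a}^a=\int_{\mathbb{R}}\psi^{2a}(x)\,|\cos(\lambda x)|^a\,\mathrm dx,
\end{align*}
and bound $|\cos(\lambda x)|^a\ge \cos^2(\lambda x)=\tfrac12+\tfrac12\cos(2\lambda x)$ whenever $a\le 2$, while for $a>2$ I would instead keep $|\cos|^a$ and use a periodicity/averaging argument. The honest and uniform route that covers every $a$ at once is the averaging one: over any period of the cosine the average of $|\cos|^a$ is a fixed positive constant $c_a=\frac{1}{\pi}\int_0^\pi|\cos\theta|^a\,\mathrm d\theta>0$, and because $\psi^{2a}$ is slowly varying compared with the rapid oscillation of frequency $\lambda=\frac{33}{24}2^n\to\infty$, a Riemann–Lebesgue / equidistribution estimate gives
\begin{align*}
\lim_{n\to\infty}\int_{\mathbb{R}}\psi^{2a}(x)\,|\cos(\lambda x)|^a\,\mathrm dx=c_a\int_{\mathbb{R}}\psi^{2a}(x)\,\mathrm dx.
\end{align*}
Since $\psi$ is not identically zero, $\int\psi^{2a}>0$, and therefore the limit is a strictly positive number; taking $A$ to be any value strictly smaller than $\bigl(c_a\int\psi^{2a}\bigr)^{1/a}$ yields the $\liminf$ bound.

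The main obstacle I anticipate is justifying the passage to the limit uniformly in the exponent $a$ — i.e. proving $\int \psi^{2a}(x)|\cos(\lambda x)|^a\,\mathrm dx\to c_a\int\psi^{2a}$ cleanly for all $a\in[1,\infty)$, rather than only for the convenient $a=2$. The rigorous way to handle this is to expand the periodic function $\theta\mapsto|\cos\theta|^a$ in its Fourier series, $|\cos(\lambda x)|^a=c_a+\sum_{k\ge1}b_k\cos(2k\lambda x)$ with $\sum|b_k|<\infty$, and then apply the Riemann–Lebesgue lemma termwise against $\psi^{2a}\in L^1$ (dominated by $\sum|b_k|\,\|\psi^{2a}\|_{L^1}$ to exchange sum and limit). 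This reduces everything to the single fact that the constant Fourier coefficient $c_a$ is positive, which is obvious, and delivers the desired uniform lower bound; the specific frequency $\frac{33}{24}2^n$ plays no special role beyond tending to infinity.
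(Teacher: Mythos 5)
Your proposal is correct. Be aware, first, that the paper itself contains no proof of Lemma \ref{psii}: right after Lemma \ref{vcos} the authors state that these lemmas ``can be proved by a similar way as Lemmas 3.2--3.4 in \cite{lyz}'' and omit all details, so the only comparison available is with that cited argument. There, the case $a=\infty$ is handled by evaluation at the origin exactly as you do, while for $a<\infty$ one typically localizes to an interval where $\psi^2\ge \tfrac12\psi^2(0)$ (or reduces $L^a$ to $L^1$ on a compact interval via H\"older) and then uses $|\cos\theta|\ge\cos^2\theta=\tfrac12+\tfrac12\cos 2\theta$ together with the Riemann--Lebesgue lemma. Your route is genuinely different and in fact stronger: expanding $|\cos\theta|^a=c_a+\sum_{k\ge1}b_k\cos(2k\theta)$ and applying Riemann--Lebesgue termwise against $\psi^{2a}\in L^1$ identifies the exact limit $\lim_{n\to\infty}\big\|\psi^2\cos\big(\tfrac{33}{24}2^n\cdot\big)\big\|_{L^a}^a=c_a\int_{\mathbb{R}}\psi^{2a}(x)\,{\rm d}x>0$ for every $a\in[1,\infty)$ in one stroke, the specific frequency entering only through $\lambda_n\to\infty$, and it avoids choosing localization intervals. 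The one ingredient you assert without justification, $\sum_{k\ge1}|b_k|<\infty$, does hold but deserves a line: for $a\ge1$ the map $\theta\mapsto|\cos\theta|^a$ is Lipschitz (its a.e.\ derivative $-a|\cos\theta|^{a-1}{\rm sgn}(\cos\theta)\sin\theta$ is bounded by $a$), so Bernstein's theorem yields absolute convergence of its Fourier series; alternatively, for $a\le 2$ your cruder bound $|\cos|^a\ge\cos^2$ already suffices and needs only the single identity $\cos^2\theta=\tfrac12+\tfrac12\cos 2\theta$, which is essentially the cited proof. Both arguments produce a constant $A=A(a)$ depending on $a$, which is all the lemma asserts and all that is used later (with $a$ tied to the integrability index $p$).
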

\begin{lemm}\label{wsin}
Let $s\in\mathbb{R}$ and $1\leq p\leq\infty,\ 1\leq r<\infty$. Define the high frequency function ${\rm w}^n_0$ by 
\begin{align*}
{\rm w}^n_0(x)=2^{-ns}\psi(x)\sin\Big(\frac{33}{24}2^{n}x\Big),\qquad n\gg 1.
\end{align*}
Then for any $\theta\in\mathbb{R}$, we have 
\begin{align}
&\|{\rm w}^n_0\|_{L^{p}}\leq C2^{-ns}\|\psi\|_{L^{p}}\leq C2^{-ns},\qquad
\|\partial_{x}{\rm w}^n_0\|_{L^{p}}\leq C2^{-ns+n},\label{wn0}\\	
&\|{\rm w}^n_0\|_{B^\theta_{p,r}}\leq C2^{n(\theta-s)}\|\psi\|_{L^p}\leq C2^{n(\theta-s)}.\label{high}
\end{align}	
\end{lemm}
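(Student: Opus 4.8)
The plan is to dispatch the two physical-space bounds in \eqref{wn0} by elementary pointwise estimates and to obtain the Besov bound \eqref{high} from a frequency-localization argument, exploiting that $\psi$ is band-limited while $\sin(\frac{33}{24}2^n\cdot)$ is a pure high-frequency oscillation.

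First I would prove \eqref{wn0}. Since $|\sin(\frac{33}{24}2^nx)|\le1$, the pointwise bound $|{\rm w}^n_0(x)|\le2^{-ns}|\psi(x)|$ yields $\|{\rm w}^n_0\|_{L^p}\le2^{-ns}\|\psi\|_{L^p}$, and $\|\psi\|_{L^p}\le C$ because $\psi\in\mathcal{S}(\mathbb{R})$. Differentiating the product gives
\begin{align*}
\partial_x{\rm w}^n_0(x)=2^{-ns}\Big(\psi'(x)\sin\big(\tfrac{33}{24}2^nx\big)+\tfrac{33}{24}2^n\,\psi(x)\cos\big(\tfrac{33}{24}2^nx\big)\Big),
\end{align*}
so $\|\partial_x{\rm w}^n_0\|_{L^p}\le2^{-ns}\big(\|\psi'\|_{L^p}+\tfrac{33}{24}2^n\|\psi\|_{L^p}\big)\le C2^{-ns+n}$ for $n\gg1$, the oscillatory term being dominant.

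The heart of the lemma is \eqref{high}. Writing $\sin(\frac{33}{24}2^nx)=\frac{1}{2i}\big(e^{i\frac{33}{24}2^nx}-e^{-i\frac{33}{24}2^nx}\big)$ and using that modulation translates the Fourier transform, I would compute
\begin{align*}
\mathcal{F}{\rm w}^n_0(\xi)=\frac{2^{-ns}}{2i}\Big(\hat{\psi}\big(\xi-\tfrac{33}{24}2^n\big)-\hat{\psi}\big(\xi+\tfrac{33}{24}2^n\big)\Big).
\end{align*}
Since $\hat{\psi}$ is supported in $\{|\xi|\le\frac12\}$ by \eqref{psi}, the spectrum of ${\rm w}^n_0$ is contained in the set where $\frac{33}{24}2^n-\frac12\le|\xi|\le\frac{33}{24}2^n+\frac12$. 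The purpose of the carrier $\frac{33}{24}$ is that $\frac43=\frac{32}{24}<\frac{33}{24}<\frac{36}{24}=\frac32$, so for all large $n$ this band sits strictly between $\frac43 2^n$ and $\frac32 2^n$, hence strictly inside the ring $\{\frac34 2^n\le|\xi|\le\frac83 2^n\}$ carrying $\Delta_n$ and disjoint from the rings carrying $\Delta_{n-1}$ and $\Delta_{n+1}$. Consequently the dyadic partition of unity reduces to the single term $\varphi(2^{-n}\cdot)\equiv1$ on $\mathrm{supp}\,\mathcal{F}{\rm w}^n_0$, which gives $\Delta_j{\rm w}^n_0=0$ for $j\ne n$ and $\Delta_n{\rm w}^n_0={\rm w}^n_0$. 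The Besov norm therefore collapses to one block, and invoking the $L^p$ bound already proved,
\begin{align*}
\|{\rm w}^n_0\|_{B^\theta_{p,r}}=2^{n\theta}\|\Delta_n{\rm w}^n_0\|_{L^p}=2^{n\theta}\|{\rm w}^n_0\|_{L^p}\le C2^{n(\theta-s)}\|\psi\|_{L^p},
\end{align*}
which is \eqref{high}.

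The only step demanding attention is the spectral localization: one must verify quantitatively that the half-width $\frac12$ of $\mathrm{supp}\,\hat{\psi}$ is negligible against the gap from the carrier $\frac{33}{24}2^n$ to the ring boundaries $\frac43 2^n$ and $\frac32 2^n$, i.e. that $\frac{1}{24}2^n>\frac12$, which holds once $n$ exceeds a fixed threshold. Granting this, the remaining arguments are the trivial pointwise bounds and the identity $\Delta_n{\rm w}^n_0={\rm w}^n_0$, so no genuine obstacle remains.
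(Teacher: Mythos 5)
Your proof is correct, and it is essentially the argument the paper has in mind: the paper omits the proof, deferring to Lemmas 3.2--3.4 of \cite{lyz}, where the same reasoning is used --- trivial pointwise bounds for \eqref{wn0}, and for \eqref{high} the observation that $\mathcal{F}{\rm w}^n_0$ is supported in an annulus of width $1$ around $|\xi|=\frac{33}{24}2^n$, which lies strictly inside the ring of $\Delta_n$ and misses all other dyadic rings, so that $\Delta_j{\rm w}^n_0=\delta_{jn}{\rm w}^n_0$ and the Besov norm collapses to the single block $2^{n\theta}\|{\rm w}^n_0\|_{L^p}$. Your quantitative check $\frac{1}{24}2^n>\frac12$ correctly identifies the role of the carrier frequency $\frac{33}{24}$, so there is nothing to add.
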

\begin{lemm}\label{vcos}
Let $s\in\mathbb{R}$ and $1\leq p\leq\infty,\ 1\leq r<\infty$. Define the low frequency function ${\rm v}^n_0$ by 
\begin{align*}
	{\rm v}^n_0(x)=\frac{24}{33}2^{-n}\psi(x),\qquad n\gg 1.
\end{align*}
Then 
\begin{align}
&\|{\rm v}^n_0\|_{L^{p}}\leq C2^{-n}\|\psi\|_{L^{p}}\leq C2^{-n},\quad
\|\partial_{x}{\rm v}^n_0\|_{L^{p}}\leq C2^{-n}\|\partial_{x}\psi\|_{L^{p}}\leq C2^{-n},\label{vn0}\\
&\|{\rm v}^n_0\|_{B^{s}_{p,r}}\leq C2^{-s}\|{\rm v}^n_0\|_{L^{p}}\leq C2^{-n-s},\label{vn0s}	
\end{align}
and there is a constant $\tilde{A}>0$ such that  
\begin{align}\label{low}
\liminf\limits_{n\rightarrow\infty}\big\|{\rm v}^n_0\partial_{x}{\rm w}^n_0\big\|_{B^s_{p,\infty}}\geq \tilde{A}.
\end{align}
\end{lemm}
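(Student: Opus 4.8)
The plan is to extract the leading-order part of the product, show it is frequency-localized in a single dyadic block, and reduce the Besov norm to an $L^p$ norm controlled by Lemma \ref{psii}. First I would compute $\partial_x{\rm w}^n_0$ and multiply by ${\rm v}^n_0$; the constants are rigged so that $\frac{24}{33}$ cancels the $\frac{33}{24}2^n$ coming from differentiating the fast oscillation, yielding the exact splitting
\[
{\rm v}^n_0\,\partial_x{\rm w}^n_0 = 2^{-ns}\psi^2(x)\cos\Big(\tfrac{33}{24}2^n x\Big) + \tfrac{24}{33}2^{-n-ns}\,\psi(x)\partial_x\psi(x)\sin\Big(\tfrac{33}{24}2^n x\Big) =: M^n + R^n .
\]
The essential feature is that in the leading term $M^n$ the factor $2^n$ from the derivative exactly balances the $2^{-n}$ from ${\rm v}^n_0$, leaving an amplitude $2^{-ns}$ independent of the low-frequency scale, whereas the remainder $R^n$ carries one extra power $2^{-n}$ and is therefore negligible.

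The crux is a spectral localization argument. Since $\hat\psi$ is supported in $[-\frac12,\frac12]$, both $\widehat{\psi^2}$ and $\widehat{\psi\,\partial_x\psi}$ are supported in $[-1,1]$. Modulating by $\cos(\frac{33}{24}2^n\cdot)$ or $\sin(\frac{33}{24}2^n\cdot)$ translates these supports into $[\frac{33}{24}2^n-1,\ \frac{33}{24}2^n+1]$ together with the reflected interval. Because $\frac{33}{24}$ lies \emph{strictly} between $\frac43=\frac83\cdot\frac12$ and $\frac32=\frac34\cdot2$, for all $n$ large enough these intervals are contained in the region where $\varphi(2^{-n}\cdot)\equiv1$ and disjoint from the supports of $\varphi(2^{-(n\pm1)}\cdot)$. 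I would verify this by the elementary inequalities $\frac{33}{24}2^n-1>\frac43 2^n$ and $\frac{33}{24}2^n+1<\frac32 2^n$, valid once $2^n$ exceeds a fixed constant. Consequently, for $n\geq n_0$ one has $\Delta_n M^n=M^n$, $\Delta_n R^n=R^n$, and $\Delta_j M^n=\Delta_j R^n=0$ for every $j\neq n$.

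Given this localization, the $B^s_{p,\infty}$ norm collapses to a single block: $\|M^n\|_{B^s_{p,\infty}}=2^{ns}\|M^n\|_{L^p}=\big\|\psi^2\cos(\tfrac{33}{24}2^n\cdot)\big\|_{L^p}$, and Lemma \ref{psii} (applied with $a=p$) gives $\liminf_{n\to\infty}\|M^n\|_{B^s_{p,\infty}}\geq A>0$. Likewise $\|R^n\|_{B^s_{p,\infty}}=2^{ns}\|R^n\|_{L^p}\leq C2^{-n}$, since $\|\psi\,\partial_x\psi\,\sin(\tfrac{33}{24}2^n\cdot)\|_{L^p}$ is uniformly bounded. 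The reverse triangle inequality then yields
\[
\big\|{\rm v}^n_0\,\partial_x{\rm w}^n_0\big\|_{B^s_{p,\infty}} \geq \|M^n\|_{B^s_{p,\infty}} - \|R^n\|_{B^s_{p,\infty}} \geq A - C2^{-n},
\]
and passing to the liminf proves \eqref{low} with $\tilde A:=A$.

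The main obstacle is precisely the localization step: one must confirm that the oscillation frequency $\frac{33}{24}2^n$, broadened by the $O(1)$ frequency content of $\psi^2$, falls strictly inside the $n$-th Littlewood–Paley annulus and misses both neighbours, so that the collapse $\Delta_n(\cdot)=(\cdot)$ holds exactly. This is exactly why the constant $\frac{33}{24}\in(\frac43,\frac32)$ is chosen; any value too close to $\frac43$ or $\frac32$ would distribute the mass across two adjacent blocks, the factor $2^{ns}$ would no longer cleanly convert the Besov norm into an $L^p$ norm, and the sharp lower bound from Lemma \ref{psii} could not be invoked. The only mild technical care beyond this is tracking that $\widehat{\psi^2}$ and $\widehat{\psi\partial_x\psi}$ have support in $[-1,1]$, which follows from the convolution structure of the Fourier transform together with the support of $\hat\psi$.
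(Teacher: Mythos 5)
Your proof of \eqref{low} is correct and is essentially the argument the paper relies on (it omits the proof, deferring to Lemmas 3.2--3.4 of \cite{lyz}): the same exact splitting ${\rm v}^n_0\partial_x{\rm w}^n_0=M^n+R^n$ in which the $\frac{24}{33}$ cancels the $\frac{33}{24}2^n$ from the fast oscillation, the same localization of the Fourier support in $\pm\frac{33}{24}2^n+[-1,1]$ strictly inside the $n$-th dyadic annulus (using $\frac43<\frac{33}{24}<\frac32$, so $\Delta_jM^n=\Delta_jR^n=0$ for $j\ne n$ and the $B^s_{p,\infty}$-norm collapses to $2^{ns}\|\cdot\|_{L^p}$), and the reduction to Lemma \ref{psii} with $a=p$ plus the reverse triangle inequality to absorb the $O(2^{-n})$ remainder. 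The only parts you do not address are \eqref{vn0}--\eqref{vn0s}, but these are immediate from the same support consideration — ${\rm supp}\,\widehat{{\rm v}^n_0}\subset\{|\xi|\le\frac12\}$ gives $\Delta_j{\rm v}^n_0=0$ for $j\ge0$, hence $\|{\rm v}^n_0\|_{B^s_{p,r}}=2^{-s}\|{\rm v}^n_0\|_{L^p}\le C2^{-n-s}$ — so this is not a substantive gap.
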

The above lemmas can be proved by a similar way as Lemmas 3.2--3.4 in \cite{lyz} and here we omit it.\\

Define ${\rm w}^n$ by the solution of Eq. \eqref{scaling} with the initial data ${\rm w}^n_0$. Then we have the following estimates.
\begin{prop}\label{wnw0}
Let $s\in\mathbb{R},\ p,\ r\in[1,\infty]$ and let $(s,p,r)$ meet the condition \eqref{cond}.	
Then for $k=-1,1$, we have
\begin{align}
&\|{\rm w}^n\|_{B^{s+k}_{p,r}}\leq 2^{kn},\label{wn}\\
&\|{\rm w}^n-{\rm w}^n_0\|_{B^{s}_{p,r}}\leq C 2^{-\frac{1}{2}n(s-\frac 3 2)}.\label{w}
\end{align} 
\end{prop}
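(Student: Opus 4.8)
The plan is to prove the two estimates \eqref{wn} and \eqref{w} separately, with the second relying on the first. Since ${\rm w}^n$ solves \eqref{scaling} with initial data ${\rm w}^n_0$, I would begin by applying Lemma \ref{bspr} to control the growth of Besov norms. For the case $k=1$, I would use the first inequality of Lemma \ref{bspr} at regularity level $s+1$, which gives
\begin{align*}
\|{\rm w}^n(t)\|_{B^{s+1}_{p,r}}\leq\|{\rm w}^n_0\|_{B^{s+1}_{p,r}}\,e^{C\int_0^t(\|{\rm w}^n_x\|_{L^\infty}+\|{\rm w}^n\|_{L^\infty}+\|{\rm w}^n\|_{L^\infty}^2+\|{\rm w}^n\|_{L^\infty}^3)\,{\ud}\tau}.
\end{align*}
By \eqref{high} with $\theta=s+1$ the prefactor is bounded by $C2^{n}$, so it suffices to show the exponential integral stays $O(1)$ in $n$. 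The second inequality of Lemma \ref{bspr} reduces the exponent to the $L^\infty$-norms of the initial data, and by \eqref{wn0} together with the embedding into $L^\infty$ one checks $\|{\rm w}^n_{0}\|_{L^\infty}\lesssim 2^{-ns}$ and $\|\partial_x{\rm w}^n_0\|_{L^\infty}\lesssim 2^{-n(s-1)}$; since $s>1$ (indeed $s>\tfrac32$), these tend to zero, so the whole exponential is $e^{O(1)t}$ and stays uniformly bounded on $[0,T_0]$. This yields $\|{\rm w}^n\|_{B^{s+1}_{p,r}}\lesssim 2^n$. For $k=-1$, the same argument at level $s-1$ and \eqref{high} with $\theta=s-1$ gives $\|{\rm w}^n\|_{B^{s-1}_{p,r}}\lesssim 2^{-n}$.

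For \eqref{w}, I would set ${\rm d}^n:={\rm w}^n-{\rm w}^n_0$ and estimate it at the lower regularity level $s-1$, then interpolate back to level $s$. Writing the equation for ${\rm d}^n$, we have ${\rm w}^n={\rm w}^n_0+\int_0^t\partial_\tau{\rm w}^n\,{\ud}\tau$, and from \eqref{scaling} the time derivative is $\partial_t{\rm w}^n=-{\rm w}^n\partial_x{\rm w}^n+G({\rm w}^n)$. The plan is to bound $\|\partial_t{\rm w}^n\|_{B^{s-1}_{p,r}}$ uniformly: the transport term ${\rm w}^n\partial_x{\rm w}^n$ is controlled in $B^{s-1}_{p,r}$ by the product estimates in Lemma \ref{product}, and the nonlocal term $G({\rm w}^n)$ gains a derivative through the $S^{-1}$-multiplier $\partial_x{\rm p}\ast$, so both are dominated by powers of $\|{\rm w}^n\|_{B^s_{p,r}}$, which is $O(1)$ by \eqref{unformly} and Lemma \ref{wsin}. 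Integrating in time then gives $\|{\rm d}^n\|_{B^{s-1}_{p,r}}\lesssim t\lesssim 1$.

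The final step is the interpolation. Using Proposition \ref{prop}(1) with $s_1=s-1$, $s_2=s+1$ and $\lambda=\tfrac12$,
\begin{align*}
\|{\rm d}^n\|_{B^s_{p,r}}\leq\|{\rm d}^n\|_{B^{s-1}_{p,r}}^{1/2}\,\|{\rm d}^n\|_{B^{s+1}_{p,r}}^{1/2}.
\end{align*}
By \eqref{wn} with $k=1$ and \eqref{high} with $\theta=s+1$, the high-regularity factor $\|{\rm d}^n\|_{B^{s+1}_{p,r}}\leq\|{\rm w}^n\|_{B^{s+1}_{p,r}}+\|{\rm w}^n_0\|_{B^{s+1}_{p,r}}\lesssim 2^n$, while the low factor is $O(1)$; naively this gives only $2^{n/2}$, so the key is to sharpen the low-regularity bound to exhibit the decay $2^{-n(s-3/2)}$ claimed in \eqref{w}. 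I expect this to be the main obstacle: one must track the factor $t$ in $\|{\rm d}^n\|_{B^{s-1}_{p,r}}$ more carefully and combine it with the explicit $2^{-n}$-type smallness of the low-frequency data, so that the geometric mean of a $2^{-n}$-scale quantity at level $s-1$ against a $2^n$-scale quantity at level $s+1$ collapses to $2^{-\frac12 n(s-3/2)}$. Getting the exponent exactly right — and in particular seeing where the threshold $s>\tfrac32$ enters — is where the delicate bookkeeping with Lemma \ref{product}(2) (valid precisely for $s>\max\{3/2,1+1/p\}$) and the interpolation constants must be done carefully.
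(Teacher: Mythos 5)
Your treatment of \eqref{wn} is right in outline but contains a technical misstep: Lemma \ref{bspr} is stated for regularity $>1$, and you cannot simply invoke it ``at level $s-1$'' when $s\leq 2$, because below level $1$ the source term requires estimating $(\partial_x{\rm w}^n)^2$ in the negative-regularity space $B^{s-2}_{p,r}$, which the $L^\infty$-weighted algebra estimates behind that lemma cannot reach. The paper instead runs the transport estimate of Lemma \ref{priori estimate} directly at the levels $s\pm1$ and controls the source with the product law of Lemma \ref{product}(2), $\|uv\|_{B^{s-2}_{p,r}}\leq C\|u\|_{B^{s-2}_{p,r}}\|v\|_{B^{s-1}_{p,r}}$ --- this is exactly where the constraint $s>\max\{\frac32,1+\frac1p\}$ enters --- so that the bound at level $s+k$ is \emph{linear} in $\|{\rm w}^n\|_{B^{s+k}_{p,r}}$ with coefficients controlled by $\|{\rm w}^n\|_{B^{s}_{p,r}}\lesssim1$ from \eqref{unformly}, and Gronwall closes to give $C2^{kn}$. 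This is fixable in your scheme, so I would not call it fatal; the real problem lies in \eqref{w}.

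For \eqref{w} your proposal stalls at precisely the decisive step, and the repair you gesture at points the wrong way. The crude bound $\|{\rm d}^n\|_{B^{s-1}_{p,r}}\lesssim t$ (even sharpened to $\lesssim t\,2^{-n}$, which your own ingredients already yield) interpolates only to $O(2^{n/2})$ or $O(1)$, never to decay; and ``tracking the factor $t$'' cannot help, since \eqref{w} is uniform in $t\in[0,T]$ and the gain is purely in $n$. Moreover your appeal to ``the $2^{-n}$-type smallness of the low-frequency data'' is a confusion: there is no ${\rm v}^n_0$ in this proposition --- ${\rm w}^n$ evolves from the high-frequency datum ${\rm w}^n_0$ alone. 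The paper's actual mechanism is structural: writing the equation for $\delta={\rm w}^n-{\rm w}^n_0$, every source term either carries a factor of $\delta$ (absorbed by Gronwall, with coefficients bounded via \eqref{unformly} and Lemma \ref{product}) or is built purely from the datum, namely ${\rm w}^n_0\partial_x{\rm w}^n_0$ and ${\rm p}_x\ast\big[\frac12(\partial_x{\rm w}^n_0)^2+c_1({\rm w}^n_0)^2+c_2({\rm w}^n_0)^3+c_3({\rm w}^n_0)^4\big]$. These pure-data terms are then estimated by combining (i) the $L^\infty$-smallness $\|{\rm w}^n_0\|_{L^\infty}\lesssim 2^{-ns}$, $\|\partial_x{\rm w}^n_0\|_{L^\infty}\lesssim 2^{-n(s-1)}$; (ii) a half-derivative sacrifice on the smoothing operator, $\|{\rm p}_x\ast F\|_{B^{s-1}_{p,r}}\leq C\|{\rm p}_x\ast F\|_{B^{s-\frac12}_{p,r}}\leq C\|F\|_{B^{s-\frac32}_{p,r}}$; and (iii) the spectral concentration of ${\rm w}^n_0$ near frequency $2^n$, which gives $\|\partial_x{\rm w}^n_0\|_{B^{s-\frac32}_{p,r}}\lesssim 2^{n(s-\frac32)}\|\partial_x{\rm w}^n_0\|_{L^p}\lesssim 2^{n(s-\frac32)}2^{n(1-s)}$. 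Multiplying against $\|\partial_x{\rm w}^n_0\|_{L^\infty}$ yields the source bound $C2^{n(\frac12-s)}$, hence $\|\delta\|_{B^{s-1}_{p,r}}\leq C2^{n(\frac12-s)}$ by Gronwall, and your interpolation step then does close: $\big(2^{n(\frac12-s)}\big)^{\frac12}\big(2^{n}\big)^{\frac12}=2^{-\frac n2(s-\frac32)}$. Note this is also where the threshold $s>\frac32$ genuinely bites --- the exponent must be negative --- and not only through Lemma \ref{product}(2) as you surmised.
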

\begin{proof}
\eqref{high} implies that
\begin{align}
\|{\rm w}^n_0\|_{B^{s+k}_{p,r}}\leq C2^{kn},\ k=-1,0,1.\label{wn-10}	
\end{align}
From Theorem \ref{th}, we know that there exists a $T=T(\|{\rm w}^n_0\|_{B^s_{p,r}})$ such that \eqref{scaling} with initial data ${\rm w}^n_0$ has a unique solution ${\rm w}^n\in E^s_{p,r}(T)$ and $T\approx 1$. Moreover, we have from \eqref{wn-10}
\begin{align}
\|{\rm w}^n\|_{L^{\infty}\big([0,T];B^s_{p,r}\big)}\leq \|{\rm w}^n_0\|_{B^s_{p,r}}\leq C.\label{wnbound}
\end{align}
Similar to \eqref{dd}, we get that for $k=\pm 1$,
\begin{align}
\|{\rm w}^{n}(t)\|_{B^{s+k}_{p,r}}
\leq  C\Bigg(\|{\rm w}^n_0\|_{B^{s+k}_{p,r}}+\int_0^t\|\partial_{x}{\rm{p}}\ast\big(({\rm w}^n)^2+({\rm w}^n_x)^2+({\rm w}^n)^3+({\rm w}^n)^4\big)\|_{B^{s+k}_{p,r}}{\ud}\tau\Bigg).\label{wnk}
\end{align}
Note that $\|\partial_{x}{\rm{p}}\ast\big(({\rm w}^n)^2+({\rm w}^n_x)^2+({\rm w}^n)^3+({\rm w}^n)^4\big)\|_{B^{s+k}_{p,r}}\leq C\|({\rm w}^n)^2+({\rm w}^n_x)^2+({\rm w}^n)^3+({\rm w}^n)^4\|_{B^{s+k-1}_{p,r}}$. For $k=-1$, by use of Lemma \ref{product}, we deduce
\begin{align}
&\|({\rm w}^n)^l\|_{B^{s-2}_{p,r}}\leq\|{\rm w}^n\|_{B^{s-2}_{p,r}}\|{\rm w}^n\|^{l-1}_{B^{s-1}_{p,r}}\leq\|{\rm w}^n\|_{B^{s-1}_{p,r}}\|({\rm w}^n)\|^{l-1}_{B^{s}_{p,r}}=\|{\rm w}^n\|_{B^{s+k}_{p,r}}\|{\rm w}^n\|^{l-1}_{B^{s}_{p,r}},\quad l=2,3,4,\notag\\
&\|({\rm w}^n_x)^2\|_{B^{s-2}_{p,r}}\leq\|{\rm w}^n_x\|_{B^{s-2}_{p,r}}\|{\rm w}^n_x\|_{B^{s-1}_{p,r}}\leq\|{\rm w}^n\|_{B^{s-1}_{p,r}}\|{\rm w}^n\|_{B^{s}_{p,r}}=\|{\rm w}^n\|_{B^{s+k}_{p,r}}\|({\rm w}^n)\|_{B^{s}_{p,r}}.\label{wn-1}
\end{align}
In a similar way, for $k=1$, we obtain
\begin{align}
&\|({\rm w}^n_x)^2\|_{B^{s}_{p,r}}\leq\|{\rm w}^n_x\|_{B^{s}_{p,r}}\|{\rm w}^n_x\|_{L^\infty}\leq\|{\rm w}^n\|_{B^{s+1}_{p,r}}\|{\rm w}^n_x\|_{B^{s-1}_{p,r}}\leq\|{\rm w}^n\|_{B^{s+1}_{p,r}}\|{\rm w}^n\|_{B^{s}_{p,r}}=\|{\rm w}^n\|_{B^{s+k}_{p,r}}\|{\rm w}^n\|_{B^{s}_{p,r}};\notag\\
&\|({\rm w}^n)^l\|_{B^{s}_{p,r}}\leq\|{\rm w}^n\|_{B^{s}_{p,r}}\|{\rm w}^n\|^{l-1}_{B^{s}_{p,r}}\leq\|{\rm w}^n\|_{B^{s+1}_{p,r}}\|{\rm w}^n\|^{l-1}_{B^{s}_{p,r}}=\|{\rm w}^n\|_{B^{s+k}_{p,r}}\|{\rm w}^n\|^{l-1}_{B^{s}_{p,r}},\ l=2,3,4.\label{wn1}
\end{align}
Plugging \eqref{wn-1}--\eqref{wn1} into \eqref{wnk}, we see for $k\pm1$,
\begin{align*}
	\|{\rm w}^{n}(t)\|_{B^{s+k}_{p,r}}
	\leq  C\|{\rm w}^n_0\|_{B^{s+k}_{p,r}}+C\int_0^t\|{\rm w}^n\|_{B^{s+k}_{p,r}}\Big(\|{\rm w}^n\|_{B^{s}_{p,r}}+\|{\rm w}^n\|^2_{B^{s}_{p,r}}+\|{\rm w}^n\|^3_{B^{s}_{p,r}}\Big){\ud}\tau.
\end{align*}
Combining the Gronwall lemma, \eqref{unformly} and \eqref{wn-10}, we find for all $t\in[0,T]$,
\begin{align}
\|{\rm w}^n(t)\|_{B^{s-1}_{p,r}}\leq C2^{-n}\qquad\text{and}\qquad
\|{\rm w}^n(t)\|_{B^{s+1}_{p,r}}\leq C2^{n}.\label{wn-11}	
\end{align}
Set $\delta={\rm w}^n-{\rm w}^n_0$. Then $\delta$ solves the following problem
\begin{equation*} \left\{\begin{array}{ll}
\partial_{t}\delta+{\rm w}^n\partial_{x}\delta=&-\delta\partial_{x}{\rm w}^n_0-{\rm{p}}_x\ast\Big[\frac{1}{2}\big({\rm w}^n_x+({\rm w}^n_0)_x\big)\delta_x+c_1\big({\rm w}^n+{\rm w}^n_0\big)\delta+c_2({\rm w}^n)^2\delta+c_2{\rm w}^n_0\big({\rm w}^n+{\rm w}^n_0\big)\delta\\
&+c_3\big(({\rm w}^n)^2+({\rm w}^n_0)^2\big)\big({\rm w}^n+{\rm w}^n_0\big)\delta\Big]-{\rm{p}}_x\ast\Big[\frac{1}{2}\big(\partial_{x}{\rm w}^n_0\big)^2+c_1({\rm w}^n_0)^2+c_2({\rm w}^n_0)^3+c_3({\rm w}^n_0)^4\Big]\\
&-{\rm w}^n_0\partial_{x}{\rm w}^n_0,  \\
\delta(0,x)=0.
\end{array}\right.
\end{equation*}
\eqref{wn0} and \eqref{high} imply that
\begin{align*}
\|{\rm w}^n_0\partial_{x}{\rm w}^n_0\|_{B^{s-1}_{p,r}}\leq&C\|{\rm w}^n_0\|_{L^\infty}\|\partial_{x}{\rm w}^n_0\|_{B^{s-1}_{p,r}}+\|{\rm w}^n_0\|_{B^{s-1}_{p,r}}\|\partial_{x}{\rm w}^n_0\|_{L^\infty}\\
\leq&C2^{-ns}+2^{-n}2^{-ns+n}\leq C2^{-ns},\\
\|{\rm{p}}_x\ast\big((\partial_{x}{\rm w}^n_0)^2\big)\|_{B^{s-1}_{p,r}}\leq&C\|{\rm{p}}_x\ast\big((\partial_{x}{\rm w}^n_0)^2\big)\|_{B^{s-\frac{1}{2}}_{p,r}}\leq C\|(\partial_{x}{\rm w}^n_0)^2\|_{B^{s-\frac{3}{2}}_{p,r}}
\leq C\|\partial_{x}{\rm w}^n_0\|_{B^{s-\frac{3}{2}}_{p,r}}\|\partial_{x}{\rm w}^n_0\|_{L^\infty}\\
\leq&C2^{n(s-\frac{3}{2})}\|\partial_{x}{\rm w}^n_0\|_{L^p}\|\partial_{x}{\rm w}^n_0\|_{L^\infty}\leq C2^{n(s-\frac{3}{2})}2^{-ns+n}2^{-ns+n}\leq C2^{n(\frac{1}{2}-s)},\\
\|{\rm{p}}_x\ast\big(({\rm w}^n_0)^l\big)\|_{B^{s-1}_{p,r}}\leq&C\|{\rm{p}}_x\ast\big(({\rm w}^n_0)^l\big)\|_{B^{s-\frac 1 2}_{p,r}}
\leq C\|({\rm w}^n_0)^l\|_{B^{s-\frac 3 2}_{p,r}}\leq C\|{\rm w}^n_0\|_{B^{s-\frac 3 2}_{p,r}}\|{\rm w}^n_0\|^{l-1}_{L^\infty}\\
\leq&C2^{n(s-\frac{3}{2})}\|{\rm w}^n_0\|_{L^p}\|{\rm w}^n_0\|^{l-1}_{L^\infty}\leq C2^{n(s-\frac{3}{2})}2^{-nls}\\
\leq& C2^{-n(l-1)s-\frac{3}{2}n}\leq C2^{n(\frac{1}{2}-s)},\qquad l=2,3,4.
\end{align*}
By means of Proposition \ref{Besov} and Lemma \ref{product}, we have
\begin{align*}
&\|\delta\partial_{x}{\rm w}^n_0\|_{B^{s-1}_{p,r}}\leq C\|\delta\|_{B^{s-1}_{p,r}}\|{\rm w}^n_0\|_{B^{s}_{p,r}},\\
&\|{\rm{p}}_x\ast\big(\big({\rm w}^n_x+({\rm w}^n_0)_x\big)\delta_x\big)\|_{B^{s-1}_{p,r}}\leq\|\big({\rm w}^n_x+({\rm w}^n_0)_x\big)\delta_x\|_{B^{s-2}_{p,r}}\leq C\big(\|{\rm w}^n\|_{B^{s}_{p,r}}+\|{\rm w}^n_0\|_{B^{s}_{p,r}}\big)\|\delta\|_{B^{s-1}_{p,r}},\\
&\|{\rm{p}}_x\ast\big(\big({\rm w}^n+{\rm w}^n_0\big)\delta\big)\|_{B^{s-1}_{p,r}}\leq C\|{\rm w}^n+{\rm w}^n_0\|_{B^{s-2}_{p,r}}\|\delta\|_{B^{s-1}_{p,r}}\leq C\big(\|{\rm w}^n\|_{B^{s}_{p,r}}+\|{\rm w}^n_0\|_{B^{s}_{p,r}}\big)\|\delta\|_{B^{s-1}_{p,r}},\\
&\|{\rm{p}}_x\ast\big(\big({\rm w}^n\big)^2\delta\big)\|_{B^{s-1}_{p,r}}\leq C\|{\rm w}^n\|^2_{B^{s}_{p,r}}\|\delta\|_{B^{s-1}_{p,r}},\\
&\|{\rm{p}}_x\ast\big(\big({\rm w}^n+{\rm w}^n_0\big){\rm w}_0^n\delta\big)\|_{B^{s-1}_{p,r}}\leq C\|\big({\rm w}^n+{\rm w}^n_0\big){\rm w}_0^n\|_{B^{s-2}_{p,r}}\|\delta\|_{B^{s-1}_{p,r}}\leq C\big(\|{\rm w}^n\|^2_{B^{s}_{p,r}}+\|{\rm w}^n_0\|^2_{B^{s}_{p,r}}\big)\|\delta\|_{B^{s-1}_{p,r}},\\
&\|{\rm{p}}_x\ast\big(\big(({\rm w}^n)^2+({\rm w}^n_0)^2\big)\big({\rm w}^n+{\rm w}^n_0\big)\delta\big)\|_{B^{s-1}_{p,r}}\leq C\big(\|{\rm w}^n\|^2_{B^{s}_{p,r}}+\|{\rm w}^n_0\|^2_{B^{s}_{p,r}}\big)\big(\|{\rm w}^n\|_{B^{s}_{p,r}}+\|{\rm w}^n_0\|_{B^{s}_{p,r}}\big)\|\delta\|_{B^{s-1}_{p,r}}.
\end{align*}
It follows that 
\begin{align*}
\|{\rm w}^n-{\rm w}^n_0\|_{B^{s-1}_{p,r}}=\|\delta\|_{B^{s-1}_{p,r}}\leq C2^{n(\frac{1}{2}-s)},	
\end{align*}
which impliesfrom the interpolation inequality that
\begin{align*}
\|{\rm w}^n-{\rm w}^n_0\|_{B^{s}_{p,r}}\leq\|{\rm w}^n-{\rm w}^n_0\|^{\frac{1}{2}}_{B^{s-1}_{p,r}}\|{\rm w}^n-{\rm w}^n_0\|^{\frac{1}{2}}_{B^{s+1}_{p,r}}\leq C2^{\frac{1}{2}(\frac{1}{2}-s)n}2^{\frac{n}{2}}\leq C2^{-\frac{1}{2}(s-\frac{3}{2})n}.	
\end{align*}
This thus finish the proof of the proposition.

\end{proof}

In order to obtain the non-uniform continuous dependence for Eq. \eqref{scaling}, we need to construct a sequence of initial data $u_0^n={\rm w}^n_0+{\rm v}^n_0$.
\begin{prop}
Let $(s,p,r)$ meet the condition \eqref{cond} in Theorem \ref{uniform} and $u^n$ be the solution to \eqref{scaling} with initial data $u^n_0$. Define ${\rm z}_0^n=-u^n_0\partial_{x}u^n_0$. Then we have
\begin{align}
	\|u^n-u^n_0-t{\rm z}_0^n\|_{B^{s}_{p,r}}\leq Ct^2+C2^{-n\min\big\{s-\frac 3 2,1\big\}}.\label{wwn}
\end{align}
\end{prop}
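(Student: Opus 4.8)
The plan is to expand $u^n$ to first order in time about its own datum. Abbreviating $u=u^n$, $u_0=u^n_0$ and recalling ${\rm z}_0^n=-u_0\partial_x u_0$, I integrate \eqref{scaling} in time and subtract $t{\rm z}_0^n$ to obtain
\begin{align*}
u(t)-u_0-t{\rm z}_0^n=\int_0^t\big(\partial_\tau u-{\rm z}_0^n\big){\ud}\tau=\int_0^t G(u)\,{\ud}\tau-\int_0^t\big(u\partial_x u-u_0\partial_x u_0\big){\ud}\tau,
\end{align*}
where I used $\partial_\tau u=-u\partial_x u+G(u)$ and ${\rm z}_0^n=-u_0\partial_x u_0$. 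I will show that the first integral produces the frequency error $C2^{-n\min\{s-3/2,1\}}$ and the second the time error $Ct^2$.

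Before estimating the two integrals I would record the a priori bounds for $u^n$. Since $u_0={\rm w}^n_0+{\rm v}^n_0$ has the same norm profile as ${\rm w}^n_0$ — namely $\|u_0\|_{B^{s+k}_{p,r}}\lesssim 2^{kn}$ for $k=-1,0,1,2$ by \eqref{high} and \eqref{vn0s}, and $\|u_0\|_{L^\infty}\lesssim 2^{-n}$, $\|\partial_x u_0\|_{L^\infty}\lesssim 2^{n(1-s)}+2^{-n}$ by \eqref{wn0} and \eqref{vn0} — repeating the proof of Proposition \ref{wnw0} together with Lemma \ref{bspr} yields, for every $\tau\in[0,T]$,
\begin{align*}
\|u(\tau)\|_{B^{s+k}_{p,r}}\lesssim 2^{kn}\ (k=-1,0,1,2),\qquad \|u(\tau)\|_{L^\infty}\lesssim 2^{-n},\qquad \|\partial_x u(\tau)\|_{L^\infty}\lesssim 2^{-n\min\{s-1,1\}}.
\end{align*}
For the first integral, the operator $\partial_x{\rm p}\ast$ is an $S^{-1}$-multiplier (Proposition \ref{Besov}(6)), so by the Moser estimate Lemma \ref{product}(1),
\begin{align*}
\|G(u)\|_{B^s_{p,r}}\leq C\big\|\tfrac12 u_x^2+c_1u^2+c_2u^3+c_3u^4\big\|_{B^{s-1}_{p,r}}\leq C\|\partial_x u\|_{L^\infty}\|u\|_{B^{s}_{p,r}}+\cdots\lesssim 2^{-n\min\{s-1,1\}}
\end{align*}
uniformly in $\tau$, whence $\big\|\int_0^t G(u)\,{\ud}\tau\big\|_{B^s_{p,r}}\lesssim t\,2^{-n\min\{s-1,1\}}\leq C2^{-n\min\{s-3/2,1\}}$.

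The heart of the matter is the second integral, which loses one derivative. Splitting $u\partial_x u-u_0\partial_x u_0=(u-u_0)\partial_x u+u_0\partial_x(u-u_0)$ and applying Lemma \ref{product}(1), the dangerous factors are $\|\partial_x u\|_{B^s_{p,r}}=\|u\|_{B^{s+1}_{p,r}}\lesssim 2^n$ and $\|\partial_x(u-u_0)\|_{B^s_{p,r}}\lesssim 2^n$, which must be absorbed by sup-norm smallness. Integrating the equation in time gives the crucial gains $\|u(\tau)-u_0\|_{L^\infty}\lesssim\tau2^{-n}$ (because $\|u\partial_x u\|_{L^\infty}$ and $\|G(u)\|_{L^\infty}$ are both $\lesssim 2^{-n}$ for $s>\tfrac32$), $\|u(\tau)-u_0\|_{B^s_{p,r}}\lesssim\tau$, $\|u(\tau)-u_0\|_{B^{s+1}_{p,r}}\lesssim\tau2^n$ (here the $B^{s+2}_{p,r}$-bound is used) and $\|\partial_x(u(\tau)-u_0)\|_{L^\infty}\lesssim\tau$. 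Combining these, each of the four resulting products is $\lesssim\tau$, so $\|u\partial_x u-u_0\partial_x u_0\|_{B^s_{p,r}}\lesssim\tau$ and the second integral is $\lesssim t^2$, which closes the estimate.

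I expect this cancellation — forcing the $2^n$ growth of the top-order norm to be annihilated by the $2^{-n}$ sup-norm smallness and to re-emerge as a clean $O(t^2)$ — to be the only real obstacle; everything else is routine multiplier and Moser bookkeeping. A more economical variant, which avoids the $B^{s+2}_{p,r}$-bound altogether, is to observe that $v=u-u_0-t{\rm z}_0^n$ solves the transport equation $\partial_t v+u\partial_x v=-v\,\partial_x\bar u+G(u)-t\partial_x(u_0{\rm z}_0^n)-t^2{\rm z}_0^n\partial_x{\rm z}_0^n$ with $\bar u=u_0+t{\rm z}_0^n$ and $v(0)=0$; one then applies Lemma \ref{priori estimate} in $B^s_{p,r}$, controlling the single awkward term $\|v\,\partial_x\bar u\|_{B^s_{p,r}}$ by first proving $\|v\|_{L^\infty}\lesssim\|v\|_{B^{s-1}_{p,r}}\lesssim\tau2^{-n}$ (a second transport estimate at regularity $s-1$) and closing with Gronwall's lemma.
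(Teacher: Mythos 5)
Your proof is correct, but your main route is genuinely different from the paper's. The paper sets ${\rm e}^n=u^n-u^n_0-t{\rm z}^n_0$, derives the transport equation \eqref{enn} for ${\rm e}^n$, and runs a two-tier a priori estimate via Lemma \ref{priori estimate}: first at the lower level $B^{s-1}_{p,r}$, where Gronwall gives $\|{\rm e}^n\|_{B^{s-1}_{p,r}}\leq C2^{-n\min\{s-\frac12,2\}}+Ct^22^{-n}$, and then at level $B^{s}_{p,r}$, where the derivative-losing coupling $2^{n}\|{\rm e}^n\|_{B^{s-1}_{p,r}}$ (coming from $\|{\rm e}^n\|_{B^{s-1}_{p,r}}\|u^n_0,{\rm z}^n_0\|_{B^{s+1}_{p,r}}$) is absorbed precisely by that lower-level smallness; no norm above $B^{s+1}_{p,r}$ is ever used. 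You instead integrate \eqref{scaling} directly in time, split the error into $\int_0^tG(u)\,{\ud}\tau$ and $\int_0^t(u\partial_xu-u_0\partial_xu_0)\,{\ud}\tau$, and defeat the one-derivative loss in the second integral by pairing the large norms $\|u\|_{B^{s+1}_{p,r}}\lesssim 2^n$, $\|u-u_0\|_{B^{s+1}_{p,r}}\lesssim\tau2^n$ against the sup-norm smallness $\|u\|_{L^\infty}\lesssim2^{-n}$, $\|u-u_0\|_{L^\infty}\lesssim\tau2^{-n}$; your observation that $\|G(u)\|_{L^\infty}\lesssim2^{-n}$ holds because $2\min\{s-1,1\}>1$ exactly when $s>\frac32$ is the correct use of condition \eqref{cond}, and your first integral even gives the slightly better rate $t\,2^{-n\min\{s-1,1\}}$. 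The price of your route is the extra propagation bound $\|u(\tau)\|_{B^{s+2}_{p,r}}\lesssim2^{2n}$ (needed for $\|u-u_0\|_{B^{s+1}_{p,r}}\lesssim\tau2^n$), which the paper never states but which does follow by repeating the Gronwall argument of Proposition \ref{wnw0} one level up, since \eqref{high} controls $\|{\rm w}^n_0\|_{B^{\theta}_{p,r}}$ for every $\theta$; the paper's error-equation route is thus more economical in regularity. What your method buys is uniformity of technique: it is essentially the scheme the paper itself uses for the critical case in Proposition \ref{critial} (there with $B^{2+\frac1p}_{p,1}$ and $B^{3+\frac1p}_{p,1}$ playing the roles of $B^{s+1}_{p,r}$ and $B^{s+2}_{p,r}$), so your argument unifies the supercritical and critical cases. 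Finally, the ``more economical variant'' you sketch at the end --- transport equation for $v=u-u_0-t{\rm z}^n_0$, a preliminary estimate at regularity $s-1$, then closing at $s$ with Gronwall --- is essentially the paper's actual proof.
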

\begin{proof}
Since $u_0^n={\rm w}^n_0+{\rm v}_0^n$, then by Lemma \ref{wsin}--\ref{vcos} and Proposition \ref{wnw0}, we have
\begin{align}\label{unn}
\|u^n\|_{B^{s+k}_{p,r}}\leq C\|u^n_0\|_{B^{s+k}_{p,r}}\leq C2^{kn},\quad k=0,\pm1.	
\end{align}
Owing to ${\rm z}_0^n=-u^n_0\partial_{x}u_0^n$, we can deduce that 
\begin{align*}
&\|{\rm z}^n_0\|_{B^{s-1}_{p,r}}\leq C\|u^n_0\|_{B^{s-1}_{p,r}}\|\partial_{x}u^n_0\|_{B^{s-1}_{p,r}}\leq C\|u^n_0\|_{B^{s-1}_{p,r}}\|u^n_0\|_{B^{s}_{p,r}}\leq C2^{-n},\\
&\|{\rm z}^n_0\|_{B^{s}_{p,r}}\leq C\|u^n_0\|_{L^\infty}\|u^n_0\|_{B^{s+1}_{p,r}}+\|\partial_{x}u^n_0\|_{L^\infty}\|u^n_0\|_{B^{s}_{p,r}}\leq C2^{-n}2^n+C\leq C,\\
&\|{\rm z}^n_0\|_{B^{s+1}_{p,r}}\leq C\|u^n_0\|_{L^\infty}\|u^n_0\|_{B^{s+2}_{p,r}}+\|\partial_{x}u^n_0\|_{L^\infty}\|u^n_0\|_{B^{s+1}_{p,r}}\leq C2^{-n}2^{2n}+C2^n\leq C2^{n}.	
\end{align*}
Set $u^n=u_0^n+t{\rm z}_0^n+{\rm{e}}^n$. We know that ${\rm{e}}^n$ satisfies
\begin{equation}\label{enn}
\left\{\begin{array}{ll}
\partial_{t}{\rm{e}}^n+u^n\partial_{x}{\rm{e}}^n
=&-{\rm{e}}^n\partial_{x}(u^n_0+t{\rm z}^n_0)+\mathcal{H}_1+\mathcal{H}_2-t\big(u^n_0\partial_{x}{\rm z}^n_0+{\rm z}^n_0\partial_{x}u^n_0-\mathcal{H}_3\big)\\
&-t^2\big({\rm z}^n_0\partial_{x}{\rm z}^n_0-\mathcal{H}_4\big)+\mathcal{H}_5,\\
{\rm{e}}^n(0,x)=0
\end{array}\right.
\end{equation}
where 
\begin{align*}
&\mathcal{H}_1=-\partial_{x}{\rm{p}}\ast\Big(c_1(u^n)^2+c_2(u^n)^3+c_3(u^n)^4\Big),\\
&\mathcal{H}_2=-\partial_{x}{\rm{p}}\ast\Big(\frac{1}{2}\big(\partial_{x}u^n_0\big)^2\Big),\\
&\mathcal{H}_3=-\partial_{x}{\rm{p}}\ast\big(\partial_{x}u^n_0\partial_{x}{\rm z}^n_0\big),\\
&\mathcal{H}_4-\partial_{x}{\rm{p}}\ast\Big(\frac{1}{2}\big(\partial_{x}{\rm z}^n_0\big)^2\Big),\\
&\mathcal{H}_5=-\partial_{x}{\rm{p}}\ast\Big(\partial_{x}{\rm{e}}^n\big(\partial_{x}u^n+\partial_{x}(u_0^n+t{\rm z}_0^n)\big)\Big).
\end{align*}
For $k=-1$,
\begin{align*}
\|{\rm{e}}^n\partial_{x}(u^n_0+t{\rm z}^n_0)\|_{B^{s-1}_{p,r}}\leq&\|{\rm{e}}^n\|_{B^{s-1}_{p,r}}\|u^n_0,{\rm z}^n_0\|_{B^{s-1}_{p,r}}\\
\leq&C\|{\rm{e}}^n\|_{B^{s+k}_{p,r}}\|u^n_0,{\rm z}^n_0\|_{B^{s}_{p,r}}+C(k+1)\|{\rm{e}}^n\|_{B^{s-1}_{p,r}}\|u^n_0,{\rm z}^n_0\|_{B^{s}_{p,r}},
\end{align*}
 and for $k=0$,
\begin{align*}
\|{\rm{e}}^n\partial_{x}(u^n_0+t{\rm z}^n_0)\|_{B^{s}_{p,r}}\leq&\Big(\|{\rm{e}}^n\|_{B^{s}_{p,r}}\|\partial_{x}u^n_0,\partial_{x}{\rm z}^n_0\|_{L^\infty}+\|\partial_{x}u^n_0,\partial_{x}{\rm z}^n_0\|_{B^{s}_{p,r}}\|{\rm{e}}^n\|_{L^\infty}\Big)\\
\leq&C\|{\rm{e}}^n\|_{B^{s}_{p,r}}\|u^n_0,{\rm z}^n_0\|_{B^{s}_{p,r}}+C\|{\rm{e}}^n\|_{B^{s-1}_{p,r}}\|u^n_0,{\rm z}^n_0\|_{B^{s+1}_{p,r}}\\
\leq&C\|{\rm{e}}^n\|_{B^{s+k}_{p,r}}\|u^n_0,{\rm z}^n_0\|_{B^{s}_{p,r}}+C(k+1)\|{\rm{e}}^n\|_{B^{s-1}_{p,r}}\|u^n_0,{\rm z}^n_0\|_{B^{s}_{p,r}}. 
\end{align*}
Applying Lemma \ref{priori estimate} to \eqref{enn}, we see for all $t\in[0,T]$,
\begin{align}
\|{\rm{e}}^n(t)\|_{B^{s+k}_{p,r}}
\leq&  C\int_0^t\|{\rm{e}}^n(\tau)\|_{B^{s+k}_{p,r}}\|u^n,u^n_0,{\rm z}^n_0\|_{B^{s}_{p,r}}{\ud}\tau+C(k+1)\int_0^t\|{\rm{e}}^n(\tau)\|_{B^{s-1}_{p,r}}\|u^n_0,{\rm z}^n_0\|_{B^{s}_{p,r}}{\ud}\tau\notag\\
&+Ct\|\mathcal{H}_1,\mathcal{H}_2\|_{B^{s+k}_{p,r}}+Ct^2\|u^n_0\partial_{x}{\rm z}^n_0,{\rm z}^n_0\partial_{x}u^n_0,\mathcal{H}_3\|_{B^{s+k}_{p,r}}+Ct^3\|{\rm z}^n_0\partial_{x}{\rm z}^n_0,\mathcal{H}_4\|_{B^{s+k}_{p,r}}.\label{ensk}
\end{align}
Noting that $u^n_0={\rm w}^n_0+{\rm v}^n_0$, we find 
\begin{align*}
\mathcal{H}_2=-\frac{1}{2}\partial_{x}{\rm{p}}\ast\Big(\big(\partial_{x}{\rm w}^n_0\big)^2\Big)-\frac{1}{2}\partial_{x}{\rm{p}}\ast\Big(\big(\partial_{x}{\rm v}^n_0\big)^2\Big)-\partial_{x}{\rm{p}}\ast\Big(\partial_{x}{\rm w}^n_0\partial_{x}{\rm v}^n_0\Big).
\end{align*}
Since $\partial_{x}{\rm{p}}$ is a $S^{-1}$ multiplier and $u^n$ is bounded in $C\Big([0,T];B^s_{p,r}\Big)$ with $T\approx1$, we can deduce from  Lemma \ref{product}, Lemma \ref{wsin} and Lemma \ref{vcos} that for $k=-1$,
\begin{align*}
&\|\partial_{x}{\rm{p}}\ast\Big(\big(\partial_{x}{\rm w}^n_0\big)^2\Big)\|_{B^{s-1}_{p,r}}\leq\|\partial_{x}p\ast\Big(\big(\partial_{x}{\rm w}^n_0\big)^2\Big)\|_{B^{s-\frac{1}{2}}_{p,r}}\leq\|\big(\partial_{x}{\rm w}^n_0\big)^2\|_{B^{s-\frac{3}{2}}_{p,r}}\leq C2^{\big(\frac{1}{2}-s\big)n},\\
&\|\partial_{x}{\rm{p}}\ast\Big(\big(\partial_{x}{\rm v}^n_0\big)^2\Big)\|_{B^{s-1}_{p,r}}\leq\|\partial_{x}{\rm v}^n_0\|^2_{B^{s-1}_{p,r}}\leq C2^{-2n},\\
&\|\partial_{x}{\rm{p}}\ast\Big(\partial_{x}{\rm w}^n_0\partial_{x}{\rm v}^n_0\Big)\|_{B^{s-1}_{p,r}}\leq C\|\partial_{x}{\rm w}^n_0\partial_{x}{\rm v}^n_0\|_{B^{s-2}_{p,r}}\leq C\|\partial_{x}{\rm w}^n_0\|_{B^{s-2}_{p,r}}\|\partial_{x}{\rm v}^n_0\|_{B^{s-1}_{p,r}}\\
&\qquad\qquad\qquad\qquad\qquad\quad\leq C2^{n(s-2)}2^{-ns+n}2^{-n}\leq C2^{-2n},\\
&\|\mathcal{H}_2\|_{B^{s-1}_{p,r}}\leq C2^{-n\min\{s-\frac{1}{2},2\}},\\
&\|\mathcal{H}_1\|_{B^{s-1}_{p,r}}\leq C\|u^n\|^l_{B^{s-1}_{p,r}}\leq C2^{-ln},\ l=2,3,4,\\
&\|\mathcal{H}_3\|_{B^{s-1}_{p,r}}+\|\mathcal{H}_4\|_{B^{s-1}_{p,r}}\leq C\Big(\|\partial_{x}u^n_0\|_{B^{s-2}_{p,r}}+\|\partial_{x}{\rm z}^n_0\|_{B^{s-2}_{p,r}}\Big)\|\partial_{x}{\rm z}^n_0\|_{B^{s-1}_{p,r}}\\
&\qquad\qquad\qquad\qquad\qquad\leq C\Big(\|u^n_0\|_{B^{s-1}_{p,r}}+\|{\rm z}^n_0\|_{B^{s-1}_{p,r}}\Big)\|{\rm z}^n_0\|_{B^{s}_{p,r}}\leq C2^{-n},\\
&\|u^n_0\partial_{x}{\rm z}^n_0+{\rm z}^n_0\partial_{x}u^n_0\|_{B^{s-1}_{p,r}}\leq C\Big(\|u^n_0\|_{B^{s-1}_{p,r}}\|{\rm z}^n_0\|_{B^{s}_{p,r}}+\|u^n_0\|_{B^{s}_{p,r}}\|{\rm z}^n_0\|_{B^{s-1}_{p,r}}\Big)\leq C2^{-n},\\
&\|{\rm z}^n_0\partial_{x}{\rm z}^n_0\|_{B^{s-1}_{p,r}}\leq C\|{\rm z}^n_0\|_{B^{s-1}_{p,r}}\|{\rm z}^n_0\|_{B^{s}_{p,r}}\leq C2^{-n}.
\end{align*}
Plugging the above inequaties into \eqref{ensk}, we can find 
\begin{align}
\|{\rm{e}}^n(t)\|_{B^{s-1}_{p,r}}\leq& \int_0^t(\tau 2^{-n}+\tau^22^{-n}+C2^{-n\min\{s-\frac{1}{2},2\}}){\ud}\tau+C\int_0^t\|{\rm{e}}^n(\tau)\|_{B^{s-1}_{p,r}}{\ud}\tau\notag\\
\leq&C2^{-n\min\{s-\frac{1}{2},2\}}+Ct^22^{-n}+C\int_0^t\|{\rm{e}}^n(\tau)\|_{B^{s-1}_{p,r}}{\ud}\tau\notag\\
\leq&C2^{-n\min\{s-\frac{1}{2},2\}}+Ct^22^{-n}\label{enk-1}
\end{align}
which we use the Gronwall lemma in the last inequality.

For $k=0$, similarly, by use of Lemma \ref{product}, Lemma \ref{wsin} and Lemma \ref{vcos}, one has 
\begin{align*}
&\|\partial_{x}{\rm{p}}\ast\Big(\big(\partial_{x}{\rm w}^n_0\big)^2\Big)\|_{B^{s}_{p,r}}\leq\|\big(\partial_{x}{\rm w}^n_0\big)^2\|_{B^{s-1}_{p,r}}\leq\|\partial_{x}{\rm w}^n_0\|_{B^{s-1}_{p,r}}\|\partial_{x}{\rm w}^n_0\|_{L^\infty}\\
&\qquad\qquad\qquad\qquad\qquad\leq C2^{n(s-1)}2^{-ns+n}2^{-ns+n}\leq C2^{(1-s)n},\\
&\|\partial_{x}{\rm{p}}\ast\Big(\big(\partial_{x}{\rm v}^n_0\big)^2\Big)\|_{B^{s}_{p,r}}\leq\|\partial_{x}{\rm v}^n_0\|^2_{B^{s-1}_{p,r}}\leq C2^{-2n},\\
&\|\partial_{x}{\rm{p}}\ast\Big(\partial_{x}{\rm w}^n_0\partial_{x}{\rm v}^n_0\Big)\|_{B^{s}_{p,r}}\leq C\|\partial_{x}{\rm w}^n_0\partial_{x}{\rm v}^n_0\|_{B^{s-1}_{p,r}}\\
&\qquad\qquad\qquad\qquad\qquad\quad\leq C\Big(\|\partial_{x}{\rm w}^n_0\|_{B^{s-1}_{p,r}}\|\partial_{x}{\rm v}^n_0\|_{L^\infty}+\|\partial_{x}{\rm v}^n_0\|_{B^{s-1}_{p,r}}\|\partial_{x}{\rm w}^n_0\|_{L^\infty}\Big)\leq C2^{-n},\\
&\|\mathcal{H}_2\|_{B^{s}_{p,r}}\leq C2^{-n\min\{s-1,1\}},\\
&\|\mathcal{H}_1\|_{B^{s}_{p,r}}\leq C\|u^n\|^l_{B^{s}_{p,r}}\leq C,\ l=2,3,4,\\
&\|\mathcal{H}_3\|_{B^{s}_{p,r}}+\|\mathcal{H}_4\|_{B^{s}_{p,r}}\leq C\Big(\|\partial_{x}u^n_0\|_{B^{s-1}_{p,r}}+\|\partial_{x}{\rm z}^n_0\|_{B^{s-1}_{p,r}}\Big)\|\partial_{x}{\rm v}^n_0\|_{B^{s-1}_{p,r}}\leq C,\\
&\|u^n_0\partial_{x}{\rm z}^n_0+{\rm z}^n_0\partial_{x}u^n_0\|_{B^{s}_{p,r}}\leq C\Big(\|u^n_0\|_{B^{s}_{p,r}}\|{\rm z}^n_0\|_{B^{s}_{p,r}}+\|u^n_0\|_{B^{s-1}_{p,r}}\|{\rm z}^n_0\|_{B^{s+1}_{p,r}}+\|u^n_0\|_{B^{s+1}_{p,r}}\|{\rm z}^n_0\|_{B^{s-1}_{p,r}}\Big)\leq C,\\
&\|{\rm z}^n_0\partial_{x}{\rm z}^n_0\|_{B^{s}_{p,r}}\leq C\Big(\|{\rm z}^n_0\|_{B^{s}_{p,r}}\|\partial_{x}{\rm z}^n_0\|_{L^\infty}+\|{\rm z}^n_0\|_{L^\infty}\|\partial_{x}{\rm z}^n_0\|_{B^{s}_{p,r}}\Big)\leq C\Big(\|{\rm z}^n_0\|^2_{B^{s}_{p,r}}+\|{\rm z}^n_0\|_{B^{s+1}_{p,r}}\|{\rm z}^n_0\|_{B^{s-1}_{p,r}}\Big)\leq C.
\end{align*}
Taking advantage of \eqref{ensk} and \eqref{enk-1}, we can easily obtain 
\begin{align*}
\|{\rm{e}}^n(t)\|_{B^{s}_{p,r}}\leq&C\int_0^t2^n\|{\rm{e}}^n(\tau)\|_{B^{s-1}_{p,r}}{\ud}\tau+Ct^2+Ct2^{-n\min\{s-1,1\}}+\int_0^t\|{\rm{e}}^n(\tau)\|_{B^{s}_{p,r}}{\ud}\tau\\
\leq&Ct2^{-n\min\{s-\frac{3}{2},1\}}+Ct^2+Ct2^{-n\min\{s-1,1\}}+\int_0^t\|{\rm{e}}^n(\tau)\|_{B^{s}_{p,r}}{\ud}\tau\\
\leq&Ct^2+C2^{-n\min\{s-\frac{3}{2},1\}}.
\end{align*}
Thus, we prove the proposition.  
\end{proof}
\begin{prop}\label{critial}
Let $(s,p,r)$ satisfy the consdition \eqref{cond1} in Theorem \ref{uniform1}. Assume that $\|u^{n}_{0}\|_{B^{1+\frac{1}{p}}_{p,1}}\lesssim1$ and $u$ is the solution to \eqref{scaling} with initial data $u^n_0$, then we obtain 
\begin{align}
\|u^{n}-u^{n}_{0}-t\mathbf{\rm h}(u^{n}_{0})\|_{B^{1+\frac{1}{p}}_{p,1}}\leq Ct^{2}\mathbf{Q}(u^{n}_{0}),\label{1pp1}
\end{align} 
where $\mathbf{\rm h}(u^{n}_{0}):=G(u^{n}_{0})-u_{0}^{n}\partial_{x}u_{0}^{n}$ and 
\begin{align*}
\mathbf{Q}(u^{n}_{0})=&1+\|u_{0}^{n}\|_{L^{\infty}}\|u_{0}^{n}\|_{B^{2+\frac{1}{p}}_{p,1}}+\|u_{0}^{n}\|_{L^{\infty}}^{2}\|u_{0}^{n}\|_{B^{3+\frac{1}{p}}_{p,1}}+\big(\|u_{0x}^{n}\|_{L^{\infty}}+\|u_{0}^{n}\|_{L^{\infty}}+\|u_{0}^{n}\|_{L^{\infty}}^{2}+\|u_{0}^{n}\|_{L^{\infty}}^{3}\big)^{2}\|u_{0}^{n}\|_{B^{2+\frac{1}{p}}_{p,1}}\\
&+\|u_{0}^{n}\|_{L^{\infty}}\big(\|u_{0x}^{n}\|_{L^{\infty}}+\|u_{0}^{n}\|_{L^{\infty}}+\|u_{0}^{n}\|_{L^{\infty}}^{2}+\|u_{0}^{n}\|_{L^{\infty}}^{3}\big)^{2}\|u_{0}^{n}\|_{B^{3+\frac{1}{p}}_{p,1}}
\end{align*}
\end{prop}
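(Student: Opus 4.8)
The plan is to expand the flow to first order in time about the initial datum. Write $u^n=u^n_0+t\,\mathbf{h}(u^n_0)+{\rm e}^n$, so that ${\rm e}^n(0,x)=0$; since $\mathbf{h}(u^n_0)=G(u^n_0)-u^n_0\partial_x u^n_0$ is precisely $\partial_t u^n|_{t=0}$, substituting the ansatz into \eqref{scaling} shows that ${\rm e}^n$ solves a transport equation
\begin{align*}
\partial_t{\rm e}^n+u^n\partial_x{\rm e}^n
={}&\big(G(u^n)-G(u^n_0)\big)-{\rm e}^n\partial_x u^n_0-t\,{\rm e}^n\partial_x\mathbf{h}(u^n_0)\\
&-t\big(\mathbf{h}(u^n_0)\partial_x u^n_0+u^n_0\partial_x\mathbf{h}(u^n_0)\big)-t^2\mathbf{h}(u^n_0)\partial_x\mathbf{h}(u^n_0).
\end{align*}
First I would sort the right-hand side into three groups: the contributions linear in ${\rm e}^n$ (namely $-{\rm e}^n\partial_x u^n_0$, $-t{\rm e}^n\partial_x\mathbf{h}(u^n_0)$, and the part of $G(u^n)-G(u^n_0)$ proportional to $\partial_x{\rm e}^n$), which will be absorbed by Gronwall; a forcing of order $t$ built from $\mathbf{h}(u^n_0)\partial_x u^n_0+u^n_0\partial_x\mathbf{h}(u^n_0)$ together with the $t\mathbf{h}(u^n_0)$-part of $G(u^n)-G(u^n_0)$; and the order-$t^2$ forcing $\mathbf{h}(u^n_0)\partial_x\mathbf{h}(u^n_0)$. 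Since $u^n=u^n_0$ and ${\rm e}^n=0$ at $t=0$, the difference $G(u^n)-G(u^n_0)$ vanishes at $t=0$, so the whole source is $O(t)$, which is what produces the factor $t^2$ in \eqref{1pp1}.

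The essential difficulty is the loss of one derivative in the critical regularity. Applying the last inequality of Lemma \ref{priori estimate} with $\theta=1+\frac1p$ and $v=u^n$ generates the term $\|\partial_x u^n\|_{B^{1/p}_{p,1}}\|\partial_x{\rm e}^n\|_{L^\infty}$, and the transport theory does not provide a usable bound for ${\rm e}^n$ at the lower level $B^{1/p}_{p,1}$. As announced in the remark following Theorem \ref{uniform1}, I would therefore run a two-tier estimate in which the lower tier is measured in $L^\infty$ instead of $B^{1/p}_{p,1}$. Concretely, I would first propagate the $L^\infty$ transport estimate along the characteristics of $u^n$, which loses no derivative and for which the smoothed nonlinearity stays bounded in $L^\infty$ even after one differentiation since $\partial_x^2{\rm p}\ast f={\rm p}\ast f-f$, so as to control $\|{\rm e}^n\|_{L^\infty}$ and $\|\partial_x{\rm e}^n\|_{L^\infty}$. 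Here Lemma \ref{bspr} is used to dominate the $L^\infty$-type quantities $\|u^n_x\|_{L^\infty}+\|u^n\|_{L^\infty}+\|u^n\|_{L^\infty}^2+\|u^n\|_{L^\infty}^3$ of the solution by the corresponding quantities for $u^n_0$. This yields $\|\partial_x{\rm e}^n\|_{L^\infty}\lesssim t^2(\cdots)$, the prefactor $(\cdots)$ being exactly the $L^\infty$-type combinations that appear in $\mathbf{Q}(u^n_0)$.

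With $\|\partial_x{\rm e}^n\|_{L^\infty}$ secured, I would close the top-tier estimate in $B^{1+1/p}_{p,1}$: the term $\|\partial_x u^n\|_{B^{1/p}_{p,1}}\|\partial_x{\rm e}^n\|_{L^\infty}$ is now under control, while the remaining commutator-type term $\|{\rm e}^n\|_{B^{1+1/p}_{p,1}}\|\partial_x u^n\|_{L^\infty}$ is absorbed by Gronwall. The bulk of the work is to bound the forcing in $B^{1+1/p}_{p,1}$. Using the smoothing of the $S^{-1}$-multiplier $\partial_x{\rm p}\ast$, the Moser estimates of Lemma \ref{product}, and the embedding $B^{1+1/p}_{p,1}\hookrightarrow W^{1,\infty}$, each forcing term reduces to products of $u^n_0$, $\mathbf{h}(u^n_0)$ and their derivatives. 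Since $\mathbf{h}(u^n_0)$ already costs one derivative of $u^n_0$ through the transport term $u^n_0\partial_x u^n_0$, measuring it in $B^{1+1/p}_{p,1}$ brings in $\|u^n_0\|_{L^\infty}\|u^n_0\|_{B^{2+1/p}_{p,1}}$, whereas the terms carrying $\partial_x\mathbf{h}(u^n_0)$, in particular the order-$t^2$ forcing $\mathbf{h}(u^n_0)\partial_x\mathbf{h}(u^n_0)$, bring in $\|u^n_0\|_{B^{3+1/p}_{p,1}}$; combined with the $L^\infty$-type prefactors produced by Lemma \ref{bspr}, these reconstruct exactly $\mathbf{Q}(u^n_0)$. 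A concluding Gronwall argument, using ${\rm e}^n(0)=0$ and $T\approx 1$, then gives $\|{\rm e}^n(t)\|_{B^{1+1/p}_{p,1}}\le Ct^2\mathbf{Q}(u^n_0)$, which is \eqref{1pp1}. I expect the most delicate points to be the bookkeeping of the highest-regularity forcing $\mathbf{h}(u^n_0)\partial_x\mathbf{h}(u^n_0)$ and the careful pairing of the $L^\infty$ and critical-Besov factors, so that no uncontrolled $B^{1/p}_{p,1}$-norm of ${\rm e}^n$ is ever needed.
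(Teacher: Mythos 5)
Your proposal is sound in outline, but it takes a genuinely different route from the paper. You transplant the error-equation strategy of the supercritical case (Proposition 4.6): set ${\rm e}^n=u^n-u^n_0-t\mathbf{h}(u^n_0)$, derive its transport equation (your equation is the correct one), and run a two-tier Gronwall argument with the lower tier measured in $W^{1,\infty}$ along characteristics instead of $B^{1/p}_{p,1}$. The paper's actual proof of Proposition \ref{critial} dispenses with the error equation entirely: since $u^{n}-u^{n}_{0}-t\mathbf{h}(u^{n}_{0})=\int_{0}^{t}\big(\partial_{\tau}u^{n}-\mathbf{h}(u^{n}_{0})\big)\,{\ud}\tau$ and $\partial_\tau u^n-\mathbf{h}(u^n_0)=-(u^n\partial_x u^n-u^n_0\partial_x u^n_0)+G(u^n)-G(u^n_0)$, it suffices to bound $W^n=u^n-u^n_0$ by $O(t)$ in the three norms $L^\infty$, $B^{1+1/p}_{p,1}$ and $B^{2+1/p}_{p,1}$, which follows directly from $W^n=\int_0^t\partial_\tau u^n\,{\ud}\tau$, the persistence estimate \eqref{u11pp1} (itself a consequence of Lemma \ref{bspr}) and the Moser estimates; a splitting like $\|W^n\partial_x u^n\|+\|u^n_0\partial_x W^n\|\lesssim\|W^n\|_{B^{1+1/p}_{p,1}}+\|W^n\|_{L^\infty}\|u^n\|_{B^{2+1/p}_{p,1}}+\|u^n_0\|_{L^\infty}\|W^n\|_{B^{2+1/p}_{p,1}}$ then delivers $Ct^2\mathbf{Q}(u^n_0)$ with no Gronwall on the second-order error at all. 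What each approach buys: the paper's method is shorter and never needs the delicate $L^\infty$ bound on $\partial_x{\rm e}^n$, which in your route drags in $\|\partial_x^2 u^n_0\|_{L^\infty}$ and $\|\partial_x^2\mathbf{h}(u^n_0)\|_{L^\infty}$ (hence up to three derivatives of $u^n_0$), so your bookkeeping would yield a variant of $\mathbf{Q}$ with slightly different pairings of the $L^\infty$-type factors and the high norms $\|u^n_0\|_{B^{2+1/p}_{p,1}},\|u^n_0\|_{B^{3+1/p}_{p,1}}$ — still uniformly bounded for the data of Theorem \ref{uniform1}, so the application survives, but not literally the stated $\mathbf{Q}$; your method, on the other hand, stays closer to the supercritical proposition and makes transparent where the derivative loss threatens.

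One small correction to your diagnosis of the difficulty: at $\theta=1+\frac1p$, $r=1$, the third case of Lemma \ref{priori estimate} applies with $V'(t)=\|\partial_x u^n\|_{B^{1/p}_{p,1}}\leq C\|u^n\|_{B^{1+1/p}_{p,1}}\leq C$, so the transport part itself incurs no loss and you need not invoke the last inequality of that lemma with its $\|\partial_x{\rm e}^n\|_{L^\infty}$ term. The genuine reason the $L^\infty$ tier is indispensable in your scheme is the forcing term ${\rm e}^n\partial_x\big(u^n_0+t\mathbf{h}(u^n_0)\big)$: estimating it as $\|{\rm e}^n\|_{B^{1+1/p}_{p,1}}\|u^n_0\|_{B^{2+1/p}_{p,1}}$ would put the unbounded-in-$n$ quantity $\|u^n_0\|_{B^{2+1/p}_{p,1}}\sim 2^n$ into the Gronwall exponent and destroy uniformity, so one must split it as $\|{\rm e}^n\|_{L^\infty}\|u^n_0\|_{B^{2+1/p}_{p,1}}+\|{\rm e}^n\|_{B^{1+1/p}_{p,1}}\|\partial_x u^n_0\|_{L^\infty}$ and feed in the lower-tier bound $\|{\rm e}^n\|_{L^\infty}\lesssim t^2$. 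With that adjustment your argument closes.
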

\begin{proof}
By virtue of Theorem \ref{th}, there exists a small time $T=T(\|u_{0}^{n}\|_{B^{1+\frac{1}{p}}_{p,1}})$ such that the solution $u^{n}\in C\big([0,T];B^{1+\frac{1}{p}}_{p,1}\big)$ and
\begin{align}
\|u^{n}\|_{L^{\infty}\big([0,T];B^{1+\frac{1}{p}}_{p,1}\big)}\leq C\|u_{0}^{n}\|_{B^{1+\frac{1}{p}}_{p,1}}\leq C\label{u1pp1}
\end{align}
which together with Lemma \ref{bspr} yields that for any $t\in[0,T]$ and $\sigma\geq1+\frac{1}{p}$
\begin{align}
\|u^{n}\|_{L^{\infty}\big([0,T];B^{\sigma}_{p,1}\big)}\leq& C\|u_{0}^{n}\|_{B^{\sigma}_{p,1}}e^{C\int_{0}^{t}\|u_{x}^{n}\|_{L^{\infty}}+\|u^{n}\|_{L^{\infty}}+\|u^{n}\|_{L^{\infty}}^{2}+\|u^{n}\|_{L^{\infty}}^{3}{\ud}\tau}\notag\\
\leq& C\|u_{0}^{n}\|_{B^{\sigma}_{p,1}}e^{C\int_{0}^{t}\big(1+\|u^{n}\|_{B^{1+\frac{1}{p}}_{p,1}}\big)^{3}{\ud}\tau}\leq C\|u_{0}^{n}\|_{B^{\sigma}_{p,1}}.\label{u11pp1}
\end{align} 
Since $u^{n}\in C\big([0,T];B^{1+\frac{1}{p}}_{p,1}\big)\hookrightarrow C\big([0,T];C^{0,1}\big)$, then we have
\begin{align}
u^{n}(t)-u_{0}^{n}=\int_{0}^{t}\partial_{\tau}u^{n}{\ud}\tau.\label{untau}	
\end{align}
Setting $W^{n}=u^{n}(t)-u^{n}_{0}$, so $W^{n}$ solves 
\begin{equation}\label{unun0}
\left\{\begin{array}{l}
\partial_{t}W^n+u^n\partial_{x}u^n=G(u^{n}),\\
W^n(0,x)=0.
\end{array}\right.
\end{equation} 
According to \eqref{scaling}, \eqref{untau} and Lemma \ref{bspr}, we see
\begin{align}
\|W^{n}\|_{L^{\infty}}\leq&\int_{0}^{t}\|\partial_{\tau}u^{n}\|_{L^{\infty}}{\ud}\tau\leq\int_{0}^{t}\|u^{n}\partial_{x}u^{n}\|_{L^{\infty}}+\|G(u^{n})\|_{L^{\infty}}{\ud}\tau\notag\\
\leq&C\int_{0}^{t}\big(\|\partial_{x}u^{n}\|_{L^{\infty}}+\|u^{n}\|_{L^{\infty}}+\|u^{n}\|_{L^{\infty}}^{2}+\|u^{n}\|_{L^{\infty}}^{3}\big)^{2}{\ud}\tau\notag\\
\leq&Ct\big(\|\partial_{x}u^{n}_{0}\|_{L^{\infty}}+\|u^{n}_{0}\|_{L^{\infty}}+\|u^{n}_{0}\|_{L^{\infty}}^{2}+\|u^{n}_{0}\|_{L^{\infty}}^{3}\big)^{2}\label{uninfty}
\end{align} 
which implies that
\begin{align}
\|u^{n}\|_{L^{\infty}}\leq\|u^{n}_{0}\|_{L^{\infty}}+Ct\big(\|\partial_{x}u^{n}_{0}\|_{L^{\infty}}+\|u^{n}_{0}\|_{L^{\infty}}+\|u^{n}_{0}\|_{L^{\infty}}^{2}+\|u^{n}_{0}\|_{L^{\infty}}^{3}\big)^{2}.\label{un123}	
\end{align}
From Lemma \ref{product} and \eqref{un123}, we deduce
\begin{align}
\|W^{n}\|_{B^{1+\frac{1}{p}}_{p,1}}\leq&\int_{0}^{t}\|\partial_{\tau}u^{n}\|_{B^{1+\frac{1}{p}}_{p,1}}{\ud}\tau\notag\\
\leq&\int_{0}^{t}\|u^{n}\partial_{x}u^{n}\|_{B^{1+\frac{1}{p}}_{p,1}}+\|G(u^{n})\|_{B^{1+\frac{1}{p}}_{p,1}}{\ud}\tau\notag\\
\leq&C\int_{0}^{t}\|u^{n}\|_{L^{\infty}}\|u^{n}\|_{B^{2+\frac{1}{p}}_{p,1}}{\ud}\tau+C\int_{0}^{t}\big(\|u^{n}\|_{B^{1+\frac{1}{p}}_{p,1}}^{2}+\|u^{n}\|_{B^{1+\frac{1}{p}}_{p,1}}^{3}+\|u^{n}\|_{B^{1+\frac{1}{p}}_{p,1}}^{4}\big){\ud}\tau\notag\\
\leq&Ct\Big[\|u^{n}_{0}\|_{L^{\infty}}+t\big(\|\partial_{x}u^{n}_{0}\|_{L^{\infty}}+\|u^{n}_{0}\|_{L^{\infty}}+\|u^{n}_{0}\|_{L^{\infty}}^{2}+\|u^{n}_{0}\|_{L^{\infty}}^{3}\big)^{2}\Big]\|u^{n}_{0}\|_{B^{2+\frac{1}{p}}_{p,1}}\notag\\
&+Ct\big(\|u^{n}_{0}\|_{B^{1+\frac{1}{p}}_{p,1}}^{2}+\|u^{n}_{0}\|_{B^{1+\frac{1}{p}}_{p,1}}^{3}+\|u^{n}_{0}\|_{B^{1+\frac{1}{p}}_{p,1}}^{4}\big).\label{un1p1}
\end{align}
Taking advantage of Lemma \ref{product} again, we find
\begin{align}
\|W^{n}\|_{B^{2+\frac{1}{p}}_{p,1}}\leq&\int_{0}^{t}\|\partial_{\tau}u^{n}\|_{B^{2+\frac{1}{p}}_{p,1}}{\ud}\tau\notag\\
\leq&\int_{0}^{t}\|u^{n}\partial_{x}u^{n}\|_{B^{2+\frac{1}{p}}_{p,1}}+\|G(u^{n})\|_{B^{2+\frac{1}{p}}_{p,1}}{\ud}\tau\notag\\
\leq&C\int_{0}^{t}\|u^{n}\|_{L^{\infty}}\|u^{n}\|_{B^{3+\frac{1}{p}}_{p,1}}{\ud}\tau+C\int_{0}^{t}\|u^{n}\|_{B^{2+\frac{1}{p}}_{p,1}}\big(\|u^{n}\|_{B^{1+\frac{1}{p}}_{p,1}}+\|u^{n}\|_{B^{1+\frac{1}{p}}_{p,1}}^{2}+\|u^{n}\|_{B^{1+\frac{1}{p}}_{p,1}}^{3}\big){\ud}\tau\notag\\
\leq&Ct\Big[\|u^{n}_{0}\|_{L^{\infty}}+t\big(\|\partial_{x}u^{n}_{0}\|_{L^{\infty}}+\|u^{n}_{0}\|_{L^{\infty}}+\|u^{n}_{0}\|_{L^{\infty}}^{2}+\|u^{n}_{0}\|_{L^{\infty}}^{3}\big)^{2}\Big]\|u^{n}_{0}\|_{B^{3+\frac{1}{p}}_{p,1}}\notag\\
&+Ct\big(\|u^{n}_{0}\|_{B^{1+\frac{1}{p}}_{p,1}}+\|u^{n}_{0}\|_{B^{1+\frac{1}{p}}_{p,1}}^{2}+\|u^{n}_{0}\|_{B^{1+\frac{1}{p}}_{p,1}}^{3}\big)\|u^{n}_{0}\|_{B^{2+\frac{1}{p}}_{p,1}}.\label{un2p1}
\end{align}
Noting that $u^{n}-u^{n}_{0}-t\mathbf{\rm h}(u^{n}_{0})=\int_{0}^{t}\partial_{\tau}u^{n}-\mathbf{\rm h}(u^{n}_{0}){\ud}\tau$, Lemma \ref{product}, \eqref{uninfty}, \eqref{un1p1} and \eqref{un2p1} together yield
\begin{align*}
\|u^{n}-u^{n}_{0}-t\mathbf{\rm h}(u^{n}_{0})\|_{B^{1+\frac{1}{p}}_{p,1}}\leq&\int_{0}^{t}\|\partial_{\tau}u^{n}-\mathbf{\rm h}(u^{n}_{0})\|_{B^{1+\frac{1}{p}}_{p,1}}{\ud}\tau\\
\leq&\int_{0}^{t}\|u^{n}\partial_{x}u^{n}-u_{0}^{n}\partial_{x}u^{n}_{0}\|_{B^{1+\frac{1}{p}}_{p,1}}{\ud}\tau+\int_{0}^{t}\|G(u^{n})-G(u_{0}^{n})\|_{B^{1+\frac{1}{p}}_{p,1}}{\ud}\tau\\
\leq&C\int_{0}^{t}\|u^{n}(\tau)-u_{0}^{n}\|_{B^{1+\frac{1}{p}}_{p,1}}{\ud}\tau+C\int_{0}^{t}\|u^{n}(\tau)-u_{0}^{n}\|_{L^{\infty}}\|u^{n}\|_{B^{2+\frac{1}{p}}_{p,1}}{\ud}\tau\\
&+C\int_{0}^{t}\|u^{n}(\tau)-u_{0}^{n}\|_{B^{2+\frac{1}{p}}_{p,1}}\|u^{n}_{0}\|_{L^{\infty}}{\ud}\tau\\
\leq&C\int_{0}^{t}\|u^{n}(\tau)-u_{0}^{n}\|_{B^{1+\frac{1}{p}}_{p,1}}{\ud}\tau+C\|u_{0}^{n}\|_{B^{2+\frac{1}{p}}_{p,1}}\int_{0}^{t}\|u^{n}(\tau)-u_{0}^{n}\|_{L^{\infty}}{\ud}\tau\\
&+C\|u^{n}_{0}\|_{L^{\infty}}\int_{0}^{t}\|u^{n}(\tau)-u_{0}^{n}\|_{B^{2+\frac{1}{p}}_{p,1}}{\ud}\tau\notag\\
\leq&Ct^{2}\mathbf{Q}(u^{n}_{0}).
\end{align*}
This completes the proof of Proposition \ref{critial}.
\end{proof}

Now we give the proof of Theorem \ref{uniform} and Theorem \ref{uniform1}. 
\begin{proof}[{\rm\textbf{The proof of Theorem \ref{uniform}}}:]
Thanks to Lemma \ref{vcos}, we see
\begin{align*}
\|u^n_0-{\rm w}^n_0\|_{B^{s}_{p,r}}=\|{\rm v}^n_0\|_{B^{s}_{p,r}}\leq C2^{-n}\Big(\leq Ct^2+C2^{-\frac{1}{2}n\min\{s-\frac{3}{2},1\}}\Big).
\end{align*}
It follows that 
\begin{align*}
\lim\limits_{n\rightarrow\infty}\Big\|u^n_0-{\rm w}^n_0\Big\|_{B^{s}_{p,r}}=0.
\end{align*}
Moreover, we have from \eqref{vn0s}, \eqref{w} and \eqref{wwn}
\begin{align*}
\|u^n-{\rm w}^n\|_{B^{s}_{p,r}}=&\|t{\rm z}^n_0+{\rm v}^n_0+{\rm w}^n_0-{\rm w}^n+{\rm{e}}^n\|_{B^{s}_{p,r}}\\
\geq& t\|{\rm z}^n_0\|_{B^{s}_{p,r}}-\|{\rm v}^n_0\|_{B^{s}_{p,r}}-\|{\rm w}^n_0-{\rm w}^n\|_{B^{s}_{p,r}}-\|{\rm{e}}^n\|_{B^{s}_{p,r}}\\
\geq&t\|{\rm z}^n_0\|_{B^{s}_{p,r}}-Ct^2-C2^{-\frac{1}{2}n\min\{s-\frac{3}{2},1\}}.	
\end{align*}
Owing to ${\rm z}_0^n=-{\rm v}_0^n\partial_{x}{\rm w}_0^n-{\rm w}_0^n\partial_{x}{\rm w}_0^n-{\rm w}_0^n\partial_{x}{\rm v}_0^n-{\rm v}_0^n\partial_{x}{\rm v}_0^n$, we find from Lemma \ref{wsin} -- Lemma \ref{vcos}
\begin{align*}
&\|{\rm w}_0^n\partial_{x}{\rm w}_0^n\|_{B^{s}_{p,r}}\leq \|{\rm w}_0^n\|_{L^\infty}\|{\rm w}_0^n\|_{B^{s+1}_{p,r}}+\|\partial_{x}{\rm w}_0^n\|_{L^\infty}\|{\rm w}_0^n\|_{B^{s}_{p,r}}\leq C2^{-n(s-1)},\\
&\|{\rm w}_0^n\partial_{x}{\rm v}_0^n\|_{B^{s}_{p,r}}\leq\|{\rm w}_0^n\|_{B^{s}_{p,r}}\|{\rm v}_0^n\|_{B^{s+1}_{p,r}}\leq C2^{-n},\\
&\|{\rm v}_0^n\partial_{x}{\rm v}_0^n\|_{B^{s}_{p,r}}\leq\|{\rm v}_0^n\|_{B^{s}_{p,r}}\|{\rm v}_0^n\|_{B^{s+1}_{p,r}}\leq C2^{-2n},
\end{align*}
which implies that by the embedded inequality $B^{s}_{p,r}\hookrightarrow B^{s}_{p,\infty}$
\begin{align*}
-\|{\rm w}_0^n\partial_{x}{\rm w}_0^n\|_{B^{s}_{p,\infty}}-\|{\rm w}_0^n\partial_{x}{\rm v}_0^n\|_{B^{s}_{p,\infty}}-\|{\rm v}_0^n\partial_{x}{\rm v}_0^n\|_{B^{s}_{p,\infty}}\geq& -\|{\rm w}_0^n\partial_{x}{\rm w}_0^n\|_{B^{s}_{p,r}}-\|{\rm w}_0^n\partial_{x}{\rm v}_0^n\|_{B^{s}_{p,r}}-\|{\rm v}_0^n\partial_{x}{\rm v}_0^n\|_{B^{s}_{p,r}}\\
\geq&-C2^{-n\min\{s-1,1\}}.
\end{align*}
Hence,
\begin{align*}
\|u^n-{\rm w}^n\|_{B^{s}_{p,r}}
\geq&t\|{\rm v}^n_0\partial_{x}{\rm w}_0^n\|_{B^{s}_{p,\infty}}-C2^{-\frac{1}{2}n\min\{s-\frac{3}{2},1\}}\quad\text{for $t>0$ small enough.}
\end{align*}
Combining with \eqref{low}, we finally obtain that
\begin{align*}
\liminf\limits_{n\rightarrow\infty}\Big\|u^n-{\rm w}^n\Big\|_{B^{s}_{p,r}}\gtrsim t,	
\quad\text{for $t>0$ small enough.}
\end{align*}
This proves Theorem \ref{uniform}.
\end{proof}
\begin{proof}[{\rm\textbf{The proof of Theorem \ref{uniform1}}}:]
Owing to Lemma \ref{wsin} and Lemma \ref{vcos}, we have for $\theta\geq1+\frac{1}{p}$
\begin{align*}
&\|u_{0}^{n}\|_{B^{\theta}_{p,1}}\leq C2^{(\theta-1-\frac{1}{p})n},\ \|{\rm w}_{0}^{n}\|_{B^{\theta}_{p,1}}\leq C2^{(\theta-1-\frac{1}{p})n},\\
&\|u_{0}^{n}\|_{L^{\infty}}\leq C2^{-n},\ \|{\rm w}_{0}^{n}\|_{L^{\infty}}\leq C2^{-(1+\frac{1}{p})n}\leq C2^{-n},\\
&\|\partial_{x}u_{0}^{n}\|_{L^{\infty}}\leq C2^{-\frac{1}{p}n}\leq C2^{-\frac{1}{2}n},\ \|\partial_{x}{\rm w}_{0}^{n}\|_{L^{\infty}}\leq C2^{-\frac{1}{p}n}\leq C2^{-\frac{1}{2}n},
\end{align*}
which follows that
\begin{align*}
\mathbf{Q}(u^{n}_{0})\leq C,\qquad \mathbf{Q}({\rm w}_{0}^{n})\leq C.
\end{align*}
Since 
\begin{align*}
&u^{n}=u^{n}-u_{0}^{n}-t\mathbf{\rm h}(u^{n}_{0})+{\rm w}_{0}^{n}+{\rm v}_{0}^{n}+t\big(G(u_{0}^{n})-u_{0}^{n}\partial_{x}u_{0}^{n}\big),\\
&{\rm w}^{n}={\rm w}^{n}-{\rm w}_{0}^{n}-t\mathbf{\rm h}({\rm w}^{n}_{0})+{\rm w}_{0}^{n}+t\big(G({\rm w}^{n}_{0})-{\rm w}^{n}_{0}\partial_{x}{\rm w}^{n}_{0}\big),\\&u_{0}^{n}\partial_{x}u_{0}^{n}={\rm w}^{n}_{0}\partial_{x}{\rm w}^{n}_{0}+{\rm v}^{n}_{0}\partial_{x}{\rm w}^{n}_{0}+u^{n}_{0}\partial_{x}{\rm v}^{n}_{0},
\end{align*}
according to Lemma \ref{wsin}--Lemma \ref{vcos}, we see 
\begin{align*}
\|u_{0}^{n}\partial_{x}{\rm v}^{n}_{0}\|_{B^{1+\frac{1}{p}}_{p,1}}\leq C\|u_{0}^{n}\|_{B^{1+\frac{1}{p}}_{p,1}}\|{\rm v}^{n}_{0}\|_{B^{2+\frac{1}{p}}_{p,1}}\leq C2^{-n},\\
\|G(u_{0}^{n})-G({\rm w}^{n}_{0})\|_{B^{1+\frac{1}{p}}_{p,1}}\leq C\|{\rm v}^{n}_{0}\|_{B^{1+\frac{1}{p}}_{p,1}}\leq C2^{-n}.
\end{align*}
Hence,
\begin{align*}
&\|u^{n}-{\rm w}^{n}\|_{B^{1+\frac{1}{p}}_{p,1}}\\
=&\|\big(u^{n}-u_{0}^{n}-t\mathbf{\rm h}(u^{n}_{0})\big)-\big({\rm w}^{n}-{\rm w}_{0}^{n}-t\mathbf{\rm h}({\rm w}^{n}_{0})\big)+{\rm v}_{0}^{n}-t\big(G(u_{0}^{n})-G({\rm w}^{n}_{0})+{\rm v}^{n}_{0}\partial_{x}{\rm w}^{n}_{0}+u^{n}_{0}\partial_{x}{\rm v}^{n}_{0}\big)\|_{B^{1+\frac{1}{p}}_{p,1}}\\
\geq&t\|{\rm v}^{n}_{0}\partial_{x}{\rm w}^{n}_{0}\|_{B^{1+\frac{1}{p}}_{p,1}}-t\|G(u_{0}^{n})-G({\rm w}^{n}_{0}\|_{B^{1+\frac{1}{p}}_{p,1}}-t\|u^{n}_{0}\partial_{x}{\rm v}^{n}_{0}\|_{B^{1+\frac{1}{p}}_{p,1}}-\|{\rm v}^{n}_{0}\|_{B^{1+\frac{1}{p}}_{p,1}}\\
&-\|u^{n}-u_{0}^{n}
-t\mathbf{\rm h}(u^{n}_{0})\|_{B^{1+\frac{1}{p}}_{p,1}}
-\|{\rm w}^{n}-{\rm w}_{0}^{n}-t\mathbf{\rm h}({\rm w}^{n}_{0})\|_{B^{1+\frac{1}{p}}_{p,1}}\\
\geq&t\|{\rm v}^{n}_{0}\partial_{x}{\rm w}^{n}_{0}\|_{B^{1+\frac{1}{p}}_{p,\infty}}-Ct2^{-n}-C2^{-n}-Ct^{2},
\end{align*}
which together with \eqref{low} gives that
\begin{align*}
\liminf\limits_{n\rightarrow\infty}\Big\|u^n-{\rm w}^n\Big\|_{B^{1+\frac{1}{p}}_{p,1}}\gtrsim t,	
\quad\text{for $t>0$ small enough,}
\end{align*}
and Theorem \ref{uniform1} is proved.

\end{proof}

\noindent\textbf{Acknowledgements.}
Y. Guo was supported by the Guangdong Basic and Applied Basic Research Foundation (No. 2020A1515111092) and Research Fund of Guangdong-Hong Kong-Macao Joint Laboratory for Intelligent Micro-Nano Optoelectronic Technology (No. 2020B1212030010). X. Tu was supported by National Natural Science Foundation of China (No. 11801076).


\begin{thebibliography}{GLMY19}
	
\bibitem{bc1}
A.~Bressan and A.~Constantin,
\newblock Global conservative solutions of the Camassa-Holm equation,
\newblock {\em Arch. Ration. Mech. Anal.}, \textbf{183} (2007), 215--239.
	
\bibitem{bc2}
A.~Bressan and A.~Constantin,
\newblock Global dissipative solutions of the Camassa-Holm equation,
\newblock {\em Anal. Appl. (Singap.)}, \textbf{5} (2007), 1--27.
	
\bibitem{book}
H.~Bahouri, J.~Y. Chemin, and R.~Danchin,
\newblock {\em Fourier analysis and nonlinear partial differential equations}, volume 343 of {\em Grundlehren der Mathematischen Wissenschaften [Fundamental Principles of Mathematical Sciences]},
\newblock Springer, Heidelberg, 2011.
	
\bibitem{bcz}
A.~Bressan, G.~Chen, and Q.~Zhang,
\newblock Uniqueness of conservative solutions to the Camassa-Holm equation via characteristics,
\newblock {\em Discrete Contin. Dyn. Syst.}, \textbf{35} (2015), 25--42.
	
\bibitem{ce3}
A.~Constantin and J.~Escher,
\newblock Wave breaking for nonlinear nonlocal shallow water equations,
\newblock {\em Acta Math.}, \textbf{181} (1998), 229--243.
	
\bibitem{ce2}
A.~Constantin and J.~Escher,
\newblock Well-posedness, global existence, and blowup phenomena for a periodic quasi-linear hyperbolic equation, \newblock {\em Comm. Pure Appl. Math.}, \textbf{51} (1998), 475--504.
	
\bibitem{cgi}
A.~Constantin, V.~S. Gerdjikov, and R.~I. Ivanov,
\newblock Inverse scattering transform for the Camassa-Holm equation,
\newblock {\em Inverse Problems}, \textbf{22} (2006), 2197--2207.
	
\bibitem{cgl}
R.~M. Chen, G.~Gui, and Y.~Liu,
\newblock On a shallow-water approximation to the Green-Naghdi equations with the Coriolis effect,
\newblock {\em Adv. Math.}, \textbf{340} (2018), 106--137.
	
\bibitem{ch}
R.~Camassa and D.~D. Holm,
\newblock An integrable shallow water equation with peaked solitons,
\newblock {\em Phys. Rev. Lett.}, \textbf{71} (1993), 1661--1664.
	
\bibitem{cj1}
A.~Constantin and R.~S. Johnson,
\newblock The dynamics of waves interacting with the Equatorial Undercurrent,
\newblock {\em Geophys. Astrophys. Fluid Dyn.}, \textbf{109} (2015), 311--358.
	
\bibitem{cj2}
A.~Constantin and R.~S. Johnson,
\newblock Ekman-type solutions for shallow-water flows on a rotating sphere: A new perspective on a classical problem,
\newblock {\em Phys. Fluids}, \textbf{31}, 2019.
	
\bibitem{cl}
A.~Constantin and D.~Lannes,
\newblock The hydrodynamical relevance of the Camassa-Holm and Degasperis-Procesi equations,
\newblock {\em Arch. Ration. Mech. Anal.}, \textbf{192} (2009), 165--186.
	
\bibitem{cmo}
A.~Constantin and L.~Molinet,
\newblock Global weak solutions for a shallow water equation,
\newblock {\em Comm. Math. Phys.}, \textbf{211} (2000), 45--61.
	
\bibitem{c2}
A.~Constantin,	
\newblock Existence of permanent and breaking waves for a shallow water equation: a geometric approach,
\newblock {\em Ann. Inst. Fourier (Grenoble)}, \textbf{50} (2000), 321--362.
	
\bibitem{c5}
A.~Constantin,
\newblock The trajectories of particles in Stokes waves,
\newblock {\em Invent. Math.}, \textbf{166} (2006), 523--535.
	
\bibitem{cs}
A.~Constantin and W.~A. Strauss,
\newblock Stability of peakons,
\newblock {\em Comm. Pure Appl. Math.}, \textbf{53} (2000), 603--610.
	
\bibitem{d1}
R.~Danchin,
\newblock A few remarks on the Camassa-Holm equation,
\newblock {\em Differential Integral Equations}, \textbf{14} (2001), 953--988.
	
\bibitem{d2}
R.~Danchin,
\newblock A note on well-posedness for Camassa-Holm equation,
\newblock {\em J. Differential Equations}, \textbf{192} (2003), 429--444.
	
\bibitem{ff}
B.~Fuchssteiner and A.~S. Fokas,
\newblock Symplectic structures, their B\"{a}cklund transformations and hereditary symmetries,
\newblock {\em Phys. D}, \textbf{4} (1981/82), 47--66.
	
\bibitem{glmy}
Z.~Guo, X.~Liu, L.~Molinet, and Z.~Yin,
\newblock Ill-posedness of the Camassa-Holm and related equations in the critical space,
\newblock {\em J. Differential Equations}, \textbf{266} (2019), 1698--1707.
	
\bibitem{guoy1}
Y.~Guo and Z.~Yin,
\newblock The Cauchy problem of the rotation Camassa--Holm equation in equatorial water waves,
\newblock {\em Appl. Anal.}, \textbf{100} (2021), 2547--2563.
	
\bibitem{hk}
A.~A. Himonas and C.~Kenig,
\newblock Non-uniform dependence on initial data for the CH equation on the line,
\newblock {\em Differential Integral Equations}, \textbf{22} (2009), 201--224.
	
\bibitem{hkm}
A.~A. Himonas, C.~Kenig, and G.~Misio\l~ek,
\newblock Non-uniform dependence for the periodic CH equation,
\newblock {\em Comm. Partial Differential Equations}, \textbf{35} (2010), 1145--1162.
	
\bibitem{hmp}
A.~A. Himonas, G.~Misio\l~ek, and G.~Ponce,
\newblock Non-uniform continuity in {$H^1$} of the solution map of the CH equation,
\newblock {\em Asian J. Math.}, \textbf{11}, (2007), 141--150.
	
\bibitem{j}
R.~S. Johnson,
\newblock Camassa-Holm, Korteweg-de Vries and related models for water waves, \newblock {\em J. Fluid Mech.}, \textbf{455} (2002), 63--82.
	
\bibitem{lio}
Y.~A. Li and P.~J. Olver,
\newblock Well-posedness and blow-up solutions for an integrable nonlinearly dispersive model wave equation,
\newblock {\em J. Differential Equations}, \textbf{162} (2000), 27--63.
	
\bibitem{lwyz}
J.~Li, X.~Wu, Y.~Yu, and W.~Zhu,
\newblock Non-uniform dependence on initial data for the Camassa--Holm equation in the critical Besov space,
\newblock {\em J. Math. Fluid Mech.}, \textbf{23}, 2021.
	
\bibitem{liy}
J.~Li and Z.~Yin,
\newblock Remarks on the well-posedness of Camassa-Holm type equations in Besov spaces, \newblock {\em J. Differential Equations}, \textbf{261} (2016), 6125--6143.
	
\bibitem{lyz}
J.~Li, Y.~Yu, and W.~Zhu,
\newblock Non-uniform dependence on initial data for the Camassa-Holm equation in Besov spaces,
\newblock {\em J. Differential Equations}, \textbf{269} (2020), 8686--8700.
	
\bibitem{tlm}
X.~Tu, Y.~Liu, and C.~Mu,
\newblock Existence and uniqueness of the global conservative weak solutions to the rotation-Camassa-Holm equation,
\newblock {\em J. Differential Equations}, \textbf{266} (2019), 4864--4900.
	
\bibitem{t}
J.~F. Toland,
\newblock Stokes waves,
\newblock {\em Topol. Methods Nonlinear Anal.}, \textbf{8} (1996), 413--414.
	
\bibitem{xz1}
Z.~Xin and P.~Zhang,
\newblock On the weak solutions to a shallow water equation,
\newblock {\em Comm. Pure Appl. Math.}, \textbf{53} (2000), 1411--1433.
	
\bibitem{yyg}
W.~Ye, Z.~Yin, and Y.~Guo,	
\newblock A new result for the local well-posedness of the Camassa-Holm type equations in critial {B}esov spaces ${B}^{1+1/p}_{p,1}$,
\newblock {\em arXiv preprint arXiv: 2101.00803}, 2021.
	
\bibitem{z}
L.~Zhang,
\newblock Non-uniform dependence and well-posedness for the rotation-Camassa-Holm equation on the torus, \newblock {\em J. Differential Equations}, \textbf{267} (2019), 5049--5083.
	
\bibitem{zlm}
M.~Zhu, Y.~Liu, and Y.~Mi,
\newblock Wave-breaking phenomena and persistence properties for the nonlocal rotation-Camassa-Holm equation,
\newblock {\em Ann. Mat. Pura Appl. (4)}, \textbf{199} (2020), 355--377.
	
\end{thebibliography}

\vspace{8mm}
E-mail address: guoyy35@fosu.edu.cn

E-mail address: tuxi@mail2.sysu.edu.cn

\end{document}